\DeclareMathOperator{\Ho}{\mathsf{Ho}}
\DeclareMathOperator{\Cb}{\mathbb{C}}
\newcommand{\Cc}{\mathcal{C}}
\newcommand{\Gb}{\mathbb{G}}
\newcommand{\Dc}{\mathcal{D}}
\DeclareMathOperator{\Spaces}{\mathsf{Space}}
\DeclareMathOperator{\eff}{eff}
\DeclareMathOperator{\colim}{\operatorname{colim}}
\renewcommand{\lim}{\operatorname{lim}}
\newcommand{\C}{\mathsf{C}}
\newcommand{\Tf}{\mathsf{T}}
\DeclareMathOperator{\id}{id}
\DeclareMathOperator{\Sb}{\mathbb{S}}
\DeclareMathOperator{\Qb}{\mathbb{Q}}
\DeclareMathOperator{\Zb}{\mathbb{Z}}
\DeclareMathOperator{\MHS}{\mathsf{MHS}}
\DeclareMathOperator{\Ab}{\mathbb{A}}
\DeclareMathOperator{\Segal}{\mathsf{Segal}}
\DeclareMathOperator{\bip}{\mathsf{Split}}
\DeclareMathOperator{\loc}{\mathsf{loc}}
\DeclareMathOperator{\Aut}{\mathsf{Aut}}
\DeclareMathOperator{\G}{\mathbb{G}}
\DeclareMathOperator{\GL}{GL}
\DeclareMathOperator{\BGL}{BGL}
\DeclareMathOperator{\Var}{\mathsf{Var}}
\DeclareMathOperator{\Oo}{\mathcal{O}}
\DeclareMathOperator{\Lb}{\mathbb{L}}
\begin{document}

\theoremstyle{definition}
\newtheorem{definition}{Definition}[section]
\newtheorem{rmk}[definition]{Remark}

\newtheorem{example}[definition]{Example} 

\newtheorem{ass}[definition]{Assumption}
\newtheorem{warning}[definition]{Dangerous Bend}
\newtheorem{porism}[definition]{Porism}
\newtheorem{hope}[definition]{Hope}
\newtheorem{situation}[definition]{Situation}
\newtheorem{construction}[definition]{Construction}

\theoremstyle{plain}

\newtheorem{theorem}[definition]{Theorem}
\newtheorem{proposition}[definition]{Proposition}
\newtheorem{corollary}[definition]{Corollary}
\newtheorem{conj}[definition]{Conjecture}
\newtheorem{lemma}[definition]{Lemma}
\newtheorem{claim}[definition]{Claim}
\newtheorem{cl}[definition]{Claim}
\newtheorem*{acknowledgement}{Acknowledgement}

\title[Classes in Zakharevich $K$-groups]{Classes in Zakharevich $K$-groups constructed\linebreak from Quillen $K$-theory}
\author{Oliver Braunling \and Michael Groechenig}
\address{Institute for Mathematics, Albert Ludwig University Freiburg}
\email{oliver.braeunling@math.uni-freiburg.de}
\address{Department of Mathematics, University of Toronto}
\email{michael.groechenig@utoronto.ca}
\begin{abstract}
We show that the $K$-groups $K_{n}(\Oo)$ for $\Oo$ the integers or an order in a CM field and $n\geq 1$ appear as direct summands of the homotopy groups of various localisations of Zakharevich's $K$-theory space. After rationalisation and going to the $1$-connective cover, this even becomes a retract of spaces. As an application, we provide the first construction of classes of infinite order in the higher Zakharevich $K$-groups.
\end{abstract}
\thanks{
O. B. was supported by DFG GK1821 \textquotedblleft Cohomological Methods
in Geometry\textquotedblright\ . M. G. was supported by an NSERC discovery grant.
}

\maketitle

\section{Introduction}

The Grothendieck ring of complex varieties $K_0(\Var)$ is defined to be as the quotient of the free abelian group generated by isomorphism classes of complex varieties $[X]$, modulo the relation
$$[X] = [U] + [Z],$$
for every closed immersion $Z \hookrightarrow X$ with open complement $U$. In \cite{zak}, Zakharevich introduced a space $K(\Var)$ such that $\pi_0 K(\Var)$ is $K_0(\Var)$. The higher Zakharevich $K$-groups, defined as the homotopy groups ${K_i(\Var):={\pi_i}K(\Var)}$, remain mysterious. Ony few classes have been constructed explicitly, and all known ones are of finite order. In this note, we settle the existence of infinite order elements, and furthermore exhibit a connection between the Quillen $K$-theory of certain rings and Zakharevich $K$-theory.

To state our main result, we must introduce some notation. Let $(X,x_0)$ be a pointed space. We denote by $X^{\circ}_{x_0}$ the path-connected component of $x_0$ in $X$. The localisation $K(\Var)_{\loc}=K(\Var)[\G_m^{-1}]$ is defined as the homotopy colimit of $[K(\Var) \xrightarrow{\times \G_m} K(\Var) \xrightarrow{\times \G_m} \cdots]$. Pathconnected component of this space are in bijection with $K_0(\Var)_{\loc}=K_0(\Var)[\G_m^{-1}]$, the localisation of the Grothendieck ring in the element $[\G_m]$.

\begin{theorem} For $\Var$ referring to varieties over the complex numbers, the following holds:

\begin{enumerate}
\item For every $n\geq1$ the group $\pi_{n}\left(  K(\mathsf{Var})
[\mathbb{G}_{m}^{-1}]\right)$ has the Quillen $K$-group $K_{n}(\mathbb{Z})$ as a
direct summand.

\item The rational homotopy type $L_{\mathbb{Q}}K(\mathbb{Z})$ is a retract of
$L_{\mathbb{Q}}\left(  K(\mathsf{Var})[\mathbb{G}_{m}^{-1}]\right)  $ after
going to the $1$-connective cover.
\end{enumerate}
\end{theorem}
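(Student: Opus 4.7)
The plan is to establish both parts by constructing a pair of maps of spaces. The forward direction $\Phi \colon K(\Oo) \to K(\Var)[\G_m^{-1}]$ is defined for both $\Oo = \mathbb{Z}$ and $\Oo$ an order in a CM field. In the opposite direction, a retraction $\Psi$ is required, with $\Psi \circ \Phi \simeq \id$. For part (1) it suffices that $\Psi$ exist on the level of homotopy groups $\pi_n$, yielding a split injection $K_n(\Oo) \hookrightarrow \pi_n K(\Var)[\G_m^{-1}]$; for part (2), $\Psi$ must exist as a map of spaces after rationalisation and passage to the $1$-connective cover.

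To construct $\Phi$ I would use the assignment sending a finitely generated projective $\Oo$-module $M$ to the complex torus $A_M = M \otimes_\Oo \mathbb{C} / M$, made algebraic as follows. For $\Oo$ an order in a CM field, a choice of CM type turns $A_M$ into an abelian variety with $\Oo$-action, functorial under $\Oo$-linear isomorphisms. For $\Oo = \mathbb{Z}$ the same formula recovers a product of copies of $\G_m$ (of rank equal to that of $M$), carrying its full $\GL_n(\mathbb{Z})$-action by algebraic automorphisms. Assembling this groupoid-level construction and extending it to the higher simplicial data required for $K$-theory produces a map from the $K$-theory space of projective $\Oo$-modules into $K(\Var)$; composing with the localisation $K(\Var) \to K(\Var)[\G_m^{-1}]$ yields $\Phi$.

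The retraction $\Psi$ is produced from transcendental invariants of varieties. After inverting $\G_m$, the relevant information is Hodge- or period-theoretic, and for varieties in the image of $\Phi$ it reconstructs the underlying $\Oo$-module. Rationally, Borel's computation of $K_n(\Oo) \otimes \mathbb{Q}$ in terms of the regulator identifies the correct invariant and yields the space-level rational retraction needed for part (2). For part (1) it suffices to produce, on each $\pi_n$ separately, a splitting of abelian groups; this can be extracted more directly from the $\Oo$-equivariant structure on the abelian varieties in the image of $\Phi$ using transfer/trace maps, without constructing a retraction of spaces integrally.

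The main difficulty is verifying that $\Psi \circ \Phi \simeq \id$. For part (2) this reduces to a regulator calculation: one must check that $\Phi$ applied to a generator of $K_{2k-1}(\Oo) \otimes \mathbb{Q}$ is sent by the Borel regulator to the standard generator in every relevant odd degree. For part (1), the obstacle is integral rather than rational and requires controlling torsion; my plan here is to define $\Psi$ on $\pi_n$ using an integral cohomological invariant of the $\Oo$-action rather than attempting to upgrade the rational retraction of spaces. The delicate point throughout is ensuring compatibility between the groupoid-level construction of $\Phi$ and the Hodge/regulator-level construction of $\Psi$, which is the heart of the argument.
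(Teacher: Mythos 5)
Your broad architecture matches the paper's: a forward map from projective $\Oo$-modules to varieties via $M \mapsto \G_m^{\operatorname{rk} M}$ (or $M\otimes_\Oo\mathbb{C}/M$ in the CM case), and a backward map via Hodge-theoretic realizations, with the goal of showing the composite agrees with $K(\mathbb{Z})\to K(\mathbb{Q})$. However, there are two genuine gaps.

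First, your plan to ``extend the groupoid-level construction to the higher simplicial data required for $K$-theory'' to obtain a map $K(\Oo)\to K(\Var)$ and then localize cannot work as stated. The assignment $M\mapsto \G_m^{\operatorname{rk} M}$ sends direct sums to \emph{products} of varieties, not to disjoint unions, so it is not compatible with the additive structure defining either $K$-theory space; the induced map can never be a map of spectra or even of infinite loop spaces. The paper handles this by observing that the functor $(F_f(\mathbb{Z}),\oplus)^\times\to(\mathsf{Tori},\times)^\times$ gives stabilization maps $\mathrm{B}\mathrm{GL}_n(\mathbb{Z})\to \mathrm{B}\mathrm{GL}_{n+1}(\mathbb{Z})$ over $\times\G_m$, and these only cohere into a map out of $\mathrm{B}\mathrm{GL}(\mathbb{Z})=\operatorname{colim}_n\mathrm{B}\mathrm{GL}_n(\mathbb{Z})$ \emph{after} inverting $\G_m$ (so the target must already be $K(\Var)[\G_m^{-1}]$, restricted to the path-component of $\mathbf{1}$). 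One then invokes the almost-universal property of the plus construction to extend from $\mathrm{B}\mathrm{GL}(\mathbb{Z})$ to $K(\mathbb{Z})^\circ_0=\mathrm{B}\mathrm{GL}(\mathbb{Z})^+$. The map $i$ is thus an ``exponential'' device: a map of $H$-spaces intertwining $\oplus$ with $\times$, defined only on a path component, not a map of $K$-theory spectra. This is not a cosmetic point — it is what makes the verification of the retraction delicate (see below), and your formulation elides it entirely.

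Second, your verification step diverges from the paper's and seems substantially harder. You propose to check $\Psi\circ\Phi\simeq\mathrm{id}$ by an explicit Borel regulator computation and to handle Part~(1) with a separate integral argument via ``transfer/trace maps'' on the $\Oo$-action. The paper does neither. It constructs $r$ concretely as the composite of the mixed Hodge realization $K(\Var)\to K(\MHS^{\eff}_{\mathbb{Z}})$, passage to the localization (enabled by a truncated geometric series argument, since $\mathbb{L}$ becomes invertible modulo low-weight objects), and the weight $-2$ graded piece $w_{-2}$ landing in $K(\mathbb{Q})$. It then proves (Proposition~\ref{prop_retract}) that $r\circ i$ induces on all homotopy groups the \emph{same} map as the standard $K(\mathbb{Z})\to K(\mathbb{Q})$, $M\mapsto M\otimes\mathbb{Q}$. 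The essential point — and the place where the exponential nature of $i$ bites — is Claim~\ref{claim}: that $r\circ i$, built from $H$-space maps that swap $\oplus$ and $\otimes$, is nonetheless an $H$-space map for $\oplus$; this requires the auxiliary rig category $\Tf$ and a homotopy-equivalence $\exp\colon K(\mathbb{Z})\to\GL_1(K(\Tf'))_{+1}$. Once this is shown, both parts of the theorem follow from the known injectivity of $K_n(\mathbb{Z})\to K_n(\mathbb{Q})$ (Soulé's localization sequence) and its rational surjectivity; no regulator computation and no separate integral transfer argument are needed. Your proposal leaves the crucial $H$-space compatibility unaddressed, and the alternative verification route it sketches (explicit regulator and transfer constructions) is not worked out and likely more difficult than the approach it replaces.
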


This will be Theorem \ref{thm1} in the main body of the article. Its proof relies on a exponential-like device, a map $i\colon K(\Zb) \to K(\Var)_{\loc}$, which does not induce a group homomorphism on the level of $\pi_0$. Instead, it intertwines $\oplus$ on $K(\Zb)$ with $\times$ on $K(\Var)_{\loc}$. Despite its unorthodox nature, this morphism is useful for constructing new classes in higher Zakharevich $K$-groups. Indeed, as a continuous map, $i$ gives rise to homomorphisms $K_i(\Zb) \to \pi_i(K(\Var)_{\loc})$ for $i>0$.

The construction of a retraction to $i$ (or left-inverse) hinges on the mixed Hodge realisation which the authors developed in joint work with Nanavaty \cite{paperletto}.

Since elements of $K_i(\Var)_{\loc}$ can be represented as fractions, it is possible to draw the following conclusion from the statement above and Borel's study of $K(\Zb)$ (see \cite{borel}):

\begin{corollary}\label{announce_cor_in_intro}
\label{cor_intro1}For every positive integer $n$, there exists an element of
infinite order in $K_{4n+1}(\mathsf{Var})$ for varieties over the complex numbers.
\end{corollary}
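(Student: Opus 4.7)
The plan is to combine Theorem \ref{thm1}(1) with Borel's computation of the ranks of $K_{\ast}(\Zb)$, and then transfer the resulting infinite-order classes from the localisation $K(\Var)[\G_m^{-1}]$ back to $K(\Var)$ itself.

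First, by Borel \cite{borel}, for every $n \geq 1$ the group $K_{4n+1}(\Zb)$ has rank one, so it contains an element $\gamma$ of infinite order. Theorem \ref{thm1}(1) realises $K_{4n+1}(\Zb)$ as a direct summand of $\pi_{4n+1}(K(\Var)[\G_m^{-1}])$, so the image $\alpha$ of $\gamma$ in this larger group is still of infinite order.

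For the lifting step, I would use the defining presentation
$$K(\Var)[\G_m^{-1}] \;=\; \colim\Bigl(K(\Var) \xrightarrow{\times [\G_m]} K(\Var) \xrightarrow{\times [\G_m]} \cdots\Bigr).$$
Since this is a sequential homotopy colimit, every element of $\pi_{4n+1}(K(\Var)[\G_m^{-1}])$ admits a representative of the form $\beta / [\G_m]^{k}$ with $\beta \in K_{4n+1}(\Var)$ and $k \geq 0$. Pick such a representative for $\alpha$. If $m\beta = 0$ in $K_{4n+1}(\Var)$ for some positive integer $m$, then the canonical map to the localisation sends $m\beta$ to zero, hence $m\alpha = m\beta / [\G_m]^{k} = 0$, contradicting the infinite order of $\alpha$. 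Therefore $\beta$ has infinite order in $K_{4n+1}(\Var)$, as required.

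No substantive obstacle is expected in carrying out this plan: Theorem \ref{thm1}(1) already packages all of the serious geometric and Hodge-theoretic content, and the corollary amounts to a formal manipulation of the filtered colimit defining the localisation combined with Borel's rank computation. The only minor point of care is to observe that once a non-torsion class has been found in the localisation, any pre-image in $K_{4n+1}(\Var)$ is automatically non-torsion, because the localisation map is a group homomorphism.
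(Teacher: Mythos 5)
Your proposal is correct and takes essentially the same route as the paper's. The paper phrases the lifting step by observing that $K_n(\Var)_{\loc}$ is the localisation of the $K_0(\Var)$-module $K_n(\Var)$ at $[\G_m]$ (citing \cite{ha}), while you argue directly from the compatibility of $\pi_*$ with the sequential homotopy colimit defining $K(\Var)[\G_m^{-1}]$; these are just two ways of expressing the same fact, that every class in the localisation is a fraction $\beta/[\G_m]^k$ with $\beta \in K_n(\Var)$ and that torsion pushes forward to torsion.
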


This will be Corollary \ref{cor}.

Such an element is automatically \emph{indecomposable} in the sense of \cite[Section 7]{cwz}. This provides an affirmative answer to Question 7.1 in \cite{cwz}.

We arrive at the proof of Corollary \ref{announce_cor_in_intro} on page \pageref{cor}. The remaining part of the paper will be devoted to a
technically more sophisticated variant of the same homotopical constructions.
It replaces the algebraic torus $\mathbb{G}_{m}$ by an abelian variety with
complex multiplication.

We recall that a \emph{CM\ field} $F$ is a number field which is a totally
complex quadratic extension of a totally real number field, e.g.
$F=\mathbb{Q}(\sqrt{-m})$ for any $m\geq1$.

\begin{theorem}
Let $F$ be a CM field with ring of integers $\mathcal{O}$. Let $A$ be a simple
abelian variety over a perfect field $k$ (possibly of positive characteristic) with
complex multiplication by $\mathcal{O}$. Let $\mathsf{Var}$ denote varieties
over $k$.

\begin{enumerate}
\item For every $n\geq1$ the group $\pi_{n}\left(  K(\mathsf{Var})
[A^{-1}]\right)$ has the Quillen $K$-group $K_{n}(\mathcal{O})$ as a direct summand.

\item The rational homotopy type $L_{\mathbb{Q}}K(\mathcal{O})$ is a retract
of $L_{\mathbb{Q}}\left(  K(\mathsf{Var})[A^{-1}]\right)  $ after going to the
$1$-connective cover.
\end{enumerate}
\end{theorem}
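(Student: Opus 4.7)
The plan is to mimic the proof of Theorem \ref{thm1}, replacing the ring $\mathbb{Z}$ by $\mathcal{O}$ and the torus $\mathbb{G}_m$ by the CM abelian variety $A$. Two ingredients are needed: (i) an exponential-like map $i\colon K(\mathcal{O}) \to K(\mathsf{Var})[A^{-1}]$ which intertwines $\oplus$ with $\times$, and (ii) a realisation functor supplying a left inverse after rationalisation and passage to the $1$-connective cover.

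For (i), to a finitely generated projective $\mathcal{O}$-module $P$ one assigns the abelian variety $A_P := P \otimes_{\mathcal{O}} A$, which makes sense because $A$ carries an $\mathcal{O}$-action. The assignment $P \mapsto A_P$ sends $\mathcal{O}$ to $A$ and direct sums to products of abelian varieties, so on classes $[P] \mapsto [A_P]$ turns $\oplus$ into $\times$. As in the proof of Theorem \ref{thm1}, one promotes this to a map of spaces by constructing the appropriate functor at the categorical level and applying $K$-theory. Once $[A]$ has been inverted, the multiplicative monoid on $K(\mathsf{Var})[A^{-1}]$ becomes grouplike near the unit, whence on $\pi_{n}$ for $n\geq 1$ Eckmann--Hilton identifies $\oplus$ with $\times$, and the induced map $K_{n}(\mathcal{O}) \to \pi_{n}(K(\mathsf{Var})[A^{-1}])$ is a genuine homomorphism of abelian groups.

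For (ii), the CM hypothesis is what lets one isolate $K(\mathcal{O})$ as a summand: in any reasonable cohomology theory, $H^{1}(A)$ for $A$ with CM by $\mathcal{O}$ becomes a free module of rank one over $\mathcal{O}\otimes R$ for the relevant coefficient ring $R$, and consequently $H^{1}(A_P) \cong P \otimes_{\mathcal{O}}(\mathcal{O}\otimes R)$ is controlled by $P$ alone. When $k$ has characteristic zero, one feeds $A_P$ into the mixed Hodge realisation of \cite{paperletto}; functoriality in $P$ together with the trivialisation produced by inverting $[A]$ then yield a left inverse to $i$ on $1$-connective rational covers. In positive characteristic the analogous strategy should proceed through an $\ell$-adic \'etale realisation for $\ell \neq \operatorname{char} k$, using that $T_{\ell}A\otimes \mathbb{Q}_{\ell}$ is free of rank one over $\mathcal{O}\otimes\mathbb{Q}_{\ell}$; alternatively one may try to lift $A$ via Serre--Tate and deduce the assertion from the complex case by specialisation of $K$-theory classes. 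Once (i) and (ii) are in place, parts (1) and (2) follow by the formal retract argument of Theorem \ref{thm1}: the identity $i \circ r \simeq \mathrm{id}$ splits off $K(\mathcal{O})$ from $K(\mathsf{Var})[A^{-1}]$ on each homotopy group (for (1)) and on the $1$-connective rational homotopy type (for (2)). The principal obstacle is (ii) in positive characteristic, where the mixed Hodge realisation of \cite{paperletto} is unavailable and a suitable $\ell$-adic (or Serre--Tate lifting) substitute must be shown to carry enough information to realise the splitting.
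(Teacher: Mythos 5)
Your skeleton is right and matches what the paper does for the incoming map: the functor $P\mapsto P\otimes_{\mathcal O}A$ on free modules gives a symmetric monoidal functor $(F_f(\mathcal O),\oplus)^\times\to(\mathsf{APow},\times)^\times$, and the stabilisation/localisation machinery (diagram \eqref{lcc1_av_version}, Lemma \ref{lemma:comm3_av}) upgrades this to a map $i\colon K(\mathcal O)^\circ_0\to (K(\mathsf{Var})[A^{-1}])^\circ_{\mathbf 1}$ of $H$-spaces intertwining $\oplus$ with $\times$. (Minor slip: you want $r\circ i\simeq\mathrm{id}$, not $i\circ r$.)

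Where your proposal diverges, and where there is a genuine gap, is the outgoing map. You treat characteristic zero and positive characteristic separately, using mixed Hodge structures in characteristic zero and conjecturally an $\ell$-adic \'etale realisation or a Serre--Tate lift in positive characteristic. Neither positive-characteristic route is carried out, and you acknowledge this. The paper avoids the case split entirely: it replaces the mixed Hodge realisation by the realisation to Deligne $1$-isomotives built from the Barbieri-Viale--Kahn derived Albanese, which exists over any perfect field. The outgoing map is then $p\circ w_{-1}\circ\tilde R$, where $w_{-1}$ passes to the weight-$(-1)$ graded piece (which is just an object of $\mathsf{AV}^{\mathrm{iso}}$), and $p$ is the projection onto the summand $K(F)=K(\End_k(A)\otimes\mathbb Q)$ of $K(\mathsf{AV}^{\mathrm{iso}})$ provided by Poincar\'e reducibility (Lemma \ref{lemma_KThyOfAVIsogenyCategory}). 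This is a cleaner device than your proposal because the $F$-linear structure is recovered automatically from the endomorphism algebra of $A$ as an isogeny class, rather than being imposed via a CM-linear structure on $H^1$; it sidesteps the coefficient-ring bookkeeping (your $\mathcal O\otimes R$ would be $\mathcal O\otimes\mathbb Q_\ell=\prod_{\lambda\mid\ell}F_\lambda$, and it is not at all clear that $K_n(F)\otimes\mathbb Q\to\prod_\lambda K_n(F_\lambda)\otimes\mathbb Q$ is injective, so the $\ell$-adic substitute may not even detect what you need).

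You should also note that the ``formal retract argument'' requires an arithmetic input you do not mention: the retraction is constructed as a composition $K_n(\mathcal O)\xrightarrow{i}\pi_n(K(\mathsf{Var})[A^{-1}])\xrightarrow{r}K_n(F)$ whose composite is identified with $M\mapsto M\otimes_{\mathcal O}F$ (Proposition \ref{prop_retract_av}); for part (1) one needs Soul\'e's finiteness theorem to know that $K_n(\mathcal O)\to K_n(F)$ is injective for $n\geq 1$, and for part (2) one needs Quillen's computation $K_n(\mathbb F_q)\otimes\mathbb Q=0$ for $n\geq 1$ to conclude $K_n(\mathcal O)\otimes\mathbb Q\xrightarrow{\sim}K_n(F)\otimes\mathbb Q$ for $n\geq 2$. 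Without those facts the composite $r\circ i$ need not be a monomorphism and the retraction does not come for free.
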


This will be Theorem \ref{thm2} in the main body of the article. In order to
be able to handle fields of positive characteristic, we replace the mixed
Hodge realisation by a realisation to Deligne $1$-motives based on the derived
Albanese functor of Barbieri-Viale and Kahn \cite{bv}. In particular, this
allows us to strengthen Corollary \ref{cor_intro1} to the following.

\begin{corollary}
Suppose $k=\mathbb{C}$. We have $\dim\left(  K_{n}(\mathsf{Var})\otimes
\mathbb{Q}\right)  =\infty$ for all odd $n\geq3$.
\end{corollary}

The restriction to the field of complex numbers can be removed by working with abelian varieties with complex multiplication. This requires replacing mixed Hodge structures by $1$-motives. 

\begin{theorem}\label{thm3_intro}
Let $A$ be a simple
abelian variety over a perfect field $k$ with complex multiplication by some order $\mathcal{O} \subseteq F$, where $F$ is a division algebra (e.g. an order in a quaternion algebra). Then, the rational homotopy type $L_{\mathbb{Q}}K(\mathcal{O})$ is a retract
of $L_{\mathbb{Q}}\left(  K(\mathsf{Var})[A^{-1}]\right)  $ after going to the
$1$-connective cover.
\end{theorem}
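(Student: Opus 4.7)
The strategy mirrors that of Theorem \ref{thm2}, with the noncommutativity of $F$ providing the principal new obstacle. The plan is to build a map
\[ i \colon K(\mathcal{O}) \longrightarrow K(\mathsf{Var})[A^{-1}] \]
that intertwines the direct-sum operation on the left with the multiplicative structure on the right, and then to construct a rational retraction by pushing through the derived Albanese functor of Barbieri-Viale and Kahn and isolating the $A$-isotypic component of the resulting $1$-motive.

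First, I would build $i$ by sending a finitely generated projective right $\mathcal{O}$-module $P$ to the abelian variety $P \otimes_{\mathcal{O}} A$, interpreted in the isogeny category: the CM hypothesis furnishes a ring homomorphism $\mathcal{O} \to \End(A)$, so $P \otimes_{\mathcal{O}} A$ is a well-defined object of the isogeny category of abelian varieties over $k$, and lifts to an actual abelian variety after inverting the finite set of primes at which $\mathcal{O}$ fails to be maximal in $F$. Any ambiguity in this lift is washed out by the subsequent rationalisation. The verification that this assembles into a continuous map, and that it intertwines $\oplus$ on the left with $\times$ on the right, then proceeds verbatim as in Theorem \ref{thm2}; the noncommutativity of $F$ is inert in this step, since $\mathcal{O}$ acts on each projective summand individually.

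Next, I would construct the retraction. The derived Albanese yields, rationally, a functor from geometric motives to Deligne $1$-motives and hence a map $L_{\mathbb{Q}} K(\mathsf{Var})[A^{-1}] \to L_{\mathbb{Q}} K(\text{1-mot})$, exactly as in the CM-field case. Since $A$ is simple, $\End^{0}(A)$ is a division algebra containing $F$, and by Poincar\'e complete reducibility the isogeny category of $1$-motives is semisimple, so projection onto the $A$-isotypic component lands in the category of finite-dimensional right $\End^{0}(A)$-modules. Restricting the module structure along $F \hookrightarrow \End^{0}(A)$ produces finite-dimensional $F$-modules, and the standard identification $L_{\mathbb{Q}} K(\mathcal{O}) \simeq L_{\mathbb{Q}} K(F)$ (valid for any order in a semisimple $\mathbb{Q}$-algebra) delivers the desired target. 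To verify that the composition $\mathsf{LAlb} \circ i$ is the identity on $L_{\mathbb{Q}} K(\mathcal{O})$ (after $1$-connective truncation), one computes the $1$-motive of $P \otimes_{\mathcal{O}} A$: by functoriality it is $P \otimes_{\mathcal{O}} M_{1}(A)$, whose $A$-isotypic projection returns $P$ as an $\mathcal{O}$-module up to isogeny.

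The main obstacle I anticipate is the bookkeeping forced by the genuine noncommutativity of $F$: one must carefully track whether $P$ is a left or right $\mathcal{O}$-module, which side $\mathcal{O}$ acts on $A$, and on which side the $A$-isotypic projection produces an $F$-module, so that the tensor product used in constructing $i$ and the module structure used in constructing the retraction are compatibly handed. Once these conventions are aligned, the argument is essentially the CM-field proof with $F$ now a possibly noncommutative division algebra; Morita invariance of rational $K$-theory and the semisimplicity of $\mathcal{O} \otimes \mathbb{Q} = F$ ensure that no substantively new obstacle arises beyond those already dispatched in Theorem \ref{thm2}.
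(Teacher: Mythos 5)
Your overall strategy coincides with the paper's: build an incoming map from $K(\mathcal{O})$ by tensoring with $A$, build the outgoing map via the derived Albanese and the weight $-1$ truncation landing in the isogeny category, project onto the $A$-isotypic summand, and use that $K_n(\mathcal{O}) \otimes \mathbb{Q} \to K_n(F) \otimes \mathbb{Q}$ is an isomorphism in degrees $\geq 2$. The paper's actual proof is terse precisely because nothing in its Section~5 constructions (the functor $(F_f(\mathcal{O}),\oplus)^\times \to (\mathsf{APow},\times)^\times$, the realisation $R = \Psi \circ R_1$, the maps $w_{-1}$ and $p$) uses commutativity of $\mathcal{O}$; it then simply cites Weibel for the rational comparison between $K(\mathcal{O})$, $K(F)$, and $K$ of the centre of $F$. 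So the verdict is: same route, and your proposal would work.

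Two points where your account diverges from (and is less clean than) the paper's. First, you define $i$ on arbitrary finitely generated projective right $\mathcal{O}$-modules $P$ via $P \otimes_{\mathcal{O}} A$, and then have to worry whether this is a genuine abelian variety or only one up to isogeny; you wave this away by saying the ambiguity is ``washed out by the subsequent rationalisation.'' But $i$ must be a genuine map of spaces into $K(\mathsf{Var})[A^{-1}]$ \emph{before} applying $L_{\mathbb{Q}}$, since the retraction $r$ is built on the nose and only the final comparison of homotopy groups is rational. The paper avoids this entirely by using free modules: the symmetric monoidal functor sends $\mathcal{O}^n \mapsto A^n$, and the connected component of $K(\mathcal{O})$ is recovered as $\BGL(\mathcal{O})^+$ via the plus-construction's (almost-)universal property. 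So there is genuinely no lifting problem to address; restricting to free modules from the outset is the right move. Second, a minor slip: ``the isogeny category of $1$-motives is semisimple'' is not correct (a $1$-motive has a nontrivial weight filtration and extensions do not split). What is semisimple, by Poincar\'e reducibility, is $\mathsf{AV}^{\mathrm{iso}}$, and one lands there only after first applying the weight $-1$ projection $w_{-1} \colon K(\widetilde{\mathsf{T}}) \to K(\mathsf{AV}^{\mathrm{iso}})$. Your conclusion is right, but the order of operations (first extract the abelian part, then project onto the $A$-isotypic summand) is what the semisimplicity is used for.

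Beyond these, the noncommutative bookkeeping you anticipate really is the only new content, and the paper's citation of Weibel (Chapter IV, Theorems 1.17--1.18) packages exactly the rational comparison $K(\mathcal{O})_{\mathbb{Q}} \simeq K(F)_{\mathbb{Q}} \simeq K(Z(F))_{\mathbb{Q}}$ in degrees $\geq 2$ that you are invoking via Morita invariance plus semisimplicity.
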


This will be Theorem \ref{thm3}. 

\begin{acknowledgement}
We thank Anubhav Nanavaty for the joint work which led to \cite{paperletto}, which is a crucial input for the present article. It's a pleasure to thank Inna Zakharevich for her inspiring talk about constructing interesting classes in higher Zakharevich $K$-theory \cite{zaktalk}.
\end{acknowledgement}

\section{Preliminaries}\label{sec:prelim}

\subsection{Conventions}
We use the theory of $\infty$-categories as described by Lurie \cite{ha}. We use this as our principal model for homotopy theory. We emphasise that most of our constructions do not rely on the $\infty$-categorical structures and work solely on the level of homotopy category.

We freely identify categories with their nerves so that groupoids can always be understood as homotopy $1$-types.

\subsection{\texorpdfstring{$K$}{K}-theory of symmetric monoidal categories}

Let $\Cc$ be a category. We denote by $\Cc^{\times}$ the maximal subgroupoid (also known as \emph{core}), that is, the groupoid obtained by discarding all non-invertible morphisms.

If $\Cc$ is endowed with a symmetric monoidal structure $\otimes$, then $(\Cc^{\times},\otimes)$ is a symmetric monoidal groupoid. By virtue of the nerve construction, $(\Cc^{\times},\otimes)$ gives rise to a Segal $\Gamma$-space\footnote{We follow the convention of Segal's original definition. In alternative nomenclature these are called \textit{special} $\Gamma$-spaces. At any rate the underlying space of our $\Gamma$-spaces has a $H$-space structure.} (or equivalently, an $E_{\infty}$-object in $\Spaces$)
$\Cc^{\times}_{\Gamma}.$

Recall that there is an idempotent endo-functor 
$$\Omega B\colon \Ho(\Segal) \to  \Ho(\Segal)$$
sending a Segal $\Gamma$-space to its group-completion. Furthermore, there is a natural transformation $$c\colon \id \Rightarrow \Omega B$$ such that 
$$c_X\colon X \to \Omega B X$$
is an equivalence if and only if $X$ is group-like.

\begin{definition}
The $K$-theory of $(\Cc,\otimes)$ is defined to be $K^{\otimes}(\Cc) = \Omega B\Cc^{\times}_{\Gamma}.$ By construction, there is a morphism
$$c\colon \mathcal{C}^{\times} \to K^{\otimes}(\mathcal{C}).$$
\end{definition}

Let $R$ be a commutative ring with unit. We denote by $P_f(R)$ the category of finitely generated projective $R$-modules. The direct sum operation for $R$-modules defines a symmetric monoidal structure $\oplus$ on $P_f(R)$. 

\begin{definition}
$K^{\oplus}(R) = K^{\oplus}(P_f(R)^{\times}).$
\end{definition}

For $\Zb$ is a principal ideal domain we have $P_f(\Zb) = F_f(\Zb)$, the category of finitely generated free abelian groups. We may therefore restate the definition above as follows:
$$K(\Zb) = K^{\oplus}(F_f^{\times}(\Zb)).$$

The pathconnected space $K(\Zb)^{\circ}_0$ can be described as the $+$-construction of $\BGL(\Zb)$, where $GL(\Zb)$ denotes the group $\bigcup_n \GL_n(\Zb)$. The latter satisfies a (weak) universal property for maps into a group-like $H$-spaces (see \cite[Theorem 1.8 \& Remark 1.8.1]{K-book} or \cite[Theorem 2.5]{MR0382398}). We denote by $k\colon \BGL(\Zb) \to \BGL(\Zb)^+$ the canonical map given by the $+$-construction.
\begin{theorem}[Quillen]\label{thm:magic}
Let $H$ be an $H$-space and $f\colon \BGL(\Zb) \to H$ be a map. Then, there exists $g\colon \BGL(\Zb)^+ \to H$ such that 
\[
\xymatrix{
 \BGL(\Zb) \ar[r]^-k \ar[rd]_f &  \BGL(\Zb)^+ \ar@{-->}[d]^g \\
 & H
}
\]
commutes up to homotopy. Furthermore, the induced graded homomorphism $\pi_*(g)\colon \pi_*(\BGL(\Zb)^+) \to \pi_*(H)$ is independent of the choice of $g$.
\end{theorem}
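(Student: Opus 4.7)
The plan is to recognise this as Quillen's classical universal property of the $+$-construction. I would first recall the explicit construction of $\BGL(\Zb)^+$: one attaches $2$-cells to $\BGL(\Zb)$ along normal generators of the perfect subgroup $E(\Zb)=[\GL(\Zb),\GL(\Zb)]$, and then $3$-cells chosen to kill the new classes in $H_2$. The resulting map $k$ is a $\Zb$-homology equivalence, and on $\pi_1$ it realises the abelianisation map $\GL(\Zb)\twoheadrightarrow \GL(\Zb)^{\ab}$.

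For the existence of $g$, the key observation is that $H$ being an $H$-space forces $\pi_1(H)$ to be abelian. Hence $f_*\colon \GL(\Zb)\to \pi_1(H)$ kills the commutator subgroup, and in particular kills the perfect subgroup $E(\Zb)$. This annihilates the obstructions to extending $f$ across the attached $2$-cells. The obstructions to extending further across the $3$-cells lie in $\pi_2(H)$; I would handle these by Quillen's original Hurewicz-based argument, which uses that the attaching $2$-spheres of the $3$-cells represent homology classes already killed by $f$ on the intermediate $2$-skeleton.

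For the uniqueness of $\pi_*(g)$, I would run a Postnikov tower argument. Since $H$ is a connected $H$-space, its fundamental group acts trivially on the higher homotopy groups, so $H$ (and each Postnikov section $H_n$) is a simple space. The general fact I would invoke is that for any acyclic map $k\colon X\to Y$ and any simple space $Z$, the map $k^*\colon [Y,Z]\to [X,Z]$ is a bijection. This reduces by induction up the Postnikov tower of $Z$ to the statement $k^*\colon H^n(Y;A)\to H^n(X;A)$ is an isomorphism for every abelian group $A$, which is immediate from $k$ being a $\Zb$-homology equivalence via the universal coefficient theorem (and the triviality of all relevant local systems coming from simplicity). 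Applying this to each $H_n$, the two lifts $g_1,g_2$ become homotopic after composing with $H\to H_n$; since $H\to H_n$ is an isomorphism on $\pi_i$ for $i\leq n$, this yields $\pi_*(g_1)=\pi_*(g_2)$.

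The hard part will be the $3$-cell step of the existence argument: one must verify that the secondary obstructions in $\pi_2(H)$ vanish, which is the substantive content of Quillen's theorem. A conceptual way to bypass the explicit calculation is to prove the abstract statement that, for any acyclic map $k\colon X\to Y$ with $\pi_1$-kernel a perfect normal subgroup $N$, every map $f\colon X\to Z$ with $f_*(N)=1$ admits a lift along $k$; this can be established by analysing the Serre spectral sequence of the homotopy fibre of $k$, where the fibre has the integral homology of a point by acyclicity. Once the abstract statement is in hand, both halves of the theorem fall out.
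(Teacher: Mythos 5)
Your proof is correct. The paper does not actually prove this statement: it is cited as a classical theorem of Quillen, with references to Weibel's $K$-book (Theorem 1.8 and Remark 1.8.1) and Gersten, and your argument is essentially the one found in those sources, phrased first in terms of obstruction theory for the explicit cell structure of the $+$-construction and then, more conceptually, via the characterisation of acyclic maps. Two things are worth spelling out in a final write-up: (i) the abstract lemma on acyclic maps needs a hypothesis on the target $Z$ (simple, or at least nilpotent) so that the obstruction groups $H^{n+1}(M_k, X; \pi_n Z)$ carry untwisted coefficients and vanish by acyclicity of $k$ together with the universal coefficient theorem --- you gesture at this when invoking simplicity of $H$, but the abstract statement as you phrase it omits the hypothesis on $Z$; and (ii) the step ``$H$ an $H$-space implies $\pi_1(H)$ abelian and $H$ simple'' requires $H$ connected, which is harmless because one may replace $H$ by the path component containing the image of $f$ (itself an $H$-space up to translation), but this should be said. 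Incidentally, in the acyclic-map framework the perfectness of the $\pi_1$-kernel is automatic --- the homotopy fibre of an acyclic map has vanishing $H_1$, hence perfect $\pi_1$, and the kernel is a quotient of that $\pi_1$ --- so once you adopt this viewpoint you need not verify $E(\Zb)=[\GL(\Zb),\GL(\Zb)]$ separately. Also, for the conclusion about $\pi_*(g)$ only injectivity of $k^*\colon [Y,Z]\to[X,Z]$ is needed, which is the same obstruction-theoretic vanishing applied to maps out of the cylinder on $X$; the Postnikov induction you propose is fine but slightly more than required.
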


\subsection{Waldhausen \texorpdfstring{$K$}{K}-theory}

A convenient way of defining $K$-theory spaces is given by Waldhausen's $S$-construction, which was introduced in \cite{wald}. We refer to \emph{loc. cit.} for detailed definitions and content ourselves with a brief review of the main ideas. 

A Waldhausen category $\Cc$ is a category with two specified subclasses of morphisms that are called \emph{cofibrations} and \emph{weak equivalences}. The axioms of \emph{loc. cit.} are required to hold. Waldhausen defines a simplicial category $S_{\bullet}\Cc$, by stipulating $S_{n}\Cc$ to consist of chains of cofibrations of the following shape when $n>0$:
$$X_0 \hookrightarrow \cdots \hookrightarrow X_{n-1}.$$
and $S_0\Cc$ to consist of a single object without non-trivial endomorphisms. Degeneracy maps are defined by repeating an object
$$(X_0 \hookrightarrow \cdots \hookrightarrow X_{n-1}) \mapsto (X_0 \hookrightarrow \cdots \hookrightarrow X_i \xrightarrow{\id} X_i \hookrightarrow \cdots \hookrightarrow X_{n-1}),$$
and face maps are given by omission/composition of subsequent cofibrations, and a quotient construction which is guaranteed to exist by Waldhausen's axioms:
$$(X_0 \hookrightarrow \cdots \hookrightarrow X_{n-1}) \mapsto (X_1/X_0 \hookrightarrow \cdots \hookrightarrow X_{n-1}/X_0).$$
\begin{definition}[Waldhausen]
$K(\Cc) = \Omega|(S_{\bullet}\Cc)^{\times w}|$,
where $(-)^{\times w}$ means that we restrict to the groupoid whose morphisms are the weak equivalences. In particular, if the weak equivalences of the Waldhausen category structure are merely the isomorphisms, this agrees with $\Cc^{\times}$.
\end{definition}

\begin{example}
\label{example_ExactCatsAreWaldhausen}Suppose $\mathcal{C}$ is an exact
category and fix a zero object $0$. Then, taking the admissible monomorphisms
as the cofibrations and isomorphisms as weak equivalences equips $\mathcal{C}$
with a Waldhausen category structure. Hence, whenever we write $K(\mathcal{C}%
)$ for an exact category $\mathcal{C}$, this is the Waldhausen structure we
mean. Exact functors between exact categories become exact functors of
Waldhausen categories.
\end{example}

\begin{example}
\label{example_DGCatsAreWaldhausen}If $\mathcal{C}$ is a DG\ category, the
category of DG\ modules $\operatorname*{mod}(\mathcal{C}^{op})$ carries a standard
model structure (\cite[Subsection 5.2(a)]{toen}). If
$\mathcal{C}$ is a\ DG\ category, the full subcategory of cofibrant and
compact DG\ modules $\operatorname*{perf}_{co}(\mathcal{C}^{op})\subset
\operatorname*{mod}(\mathcal{C}^{op})$ carries the structure of a Waldhausen
category whose cofibrations and weak equivalences are those of the
aforementioned model structure. We refer to \cite[\S 2.2]{paperletto} for details.
\end{example}

\begin{example}
If $\mathcal{C}$ is an exact category and $\mathsf{C}^{b}(\mathcal{C})$ its
DG\ category of bounded complexes, we have $K(\mathcal{C})\cong K(\mathsf{C}%
^{b}(\mathcal{C}))$ by the Gillet--Waldhausen theorem. In this sense, the
previous examples are compatible.
\end{example}

\begin{theorem}[Waldhausen]
Let $R$ be a commutative ring with unit. Then, $P_f(R)$ can be endowed with the structure of a Waldhausen category such that $K^{\oplus}(R) = K(P_f(R)^{\times}).$
\end{theorem}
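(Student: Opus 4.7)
The plan is to equip $P_f(R)$ with its \emph{split exact} Waldhausen structure and then appeal to Waldhausen's comparison theorem identifying the resulting $K$-theory with the group completion of the symmetric monoidal core $(P_f(R)^\times, \oplus)$. Concretely, take the cofibrations to be the admissible monomorphisms of the split exact structure---equivalently, the injections with projective cokernel, which automatically split---and the weak equivalences to be the isomorphisms. Verifying Waldhausen's axioms is routine in this setting: pushouts along cofibrations exist and are again cofibrations, by projectivity of the relevant cokernels.

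With this structure fixed, the desired identification follows from Waldhausen's theorem \cite{wald}, which in the split exact setting compares the simplicial groupoid $S_\bullet P_f(R)^{\times w}$ with the Segal $\Gamma$-space attached to $(P_f(R)^\times, \oplus)$. The key mechanism is the associated graded map
\[
(X_0 \hookrightarrow X_1 \hookrightarrow \cdots \hookrightarrow X_{n-1}) \longmapsto (X_0,\, X_1/X_0,\, \ldots,\, X_{n-1}/X_{n-2})
\]
together with the iterated direct sum in the opposite direction; the two composites are naturally isomorphic to the respective identities once one chooses splittings, which are guaranteed to exist in the split exact setting. Taking geometric realisation and $\Omega$ yields
\[
K(P_f(R)) \;=\; \Omega|S_\bullet P_f(R)^{\times w}| \;\simeq\; \Omega B\, P_f(R)^\times_\Gamma \;=\; K^\oplus(R).
\]

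The only subtle point is that the natural isomorphisms involved are non-canonical---they depend on choices of splittings---but since any natural isomorphism between functors of groupoids induces a homotopy between the associated maps of classifying spaces, this suffices to carry out the simplicial comparison. The main obstacle in a self-contained write-up is precisely this bookkeeping; once delegated to Waldhausen's original argument in the split exact case, the theorem follows immediately.
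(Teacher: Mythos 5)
Your proposal follows the same route as the paper: endow $P_f(R)$ with the split Waldhausen structure (cofibrations the admissible monomorphisms, which automatically split by projectivity of cokernels; weak equivalences the isomorphisms) and invoke Waldhausen's comparison between the $S_\bullet$-construction and the $\Gamma$-space group completion. The paper's proof is precisely a pointer to \cite[Section 1.6.5]{wald}, so you agree on both the choice of Waldhausen structure and the reference.

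Your description of the ``key mechanism'' is, however, not quite right. The two functors between $S_n P_f(R)^\times$ and $(P_f(R)^\times)^n$ do not form an inverse pair of equivalences of groupoids, even after choosing splittings. While the composite $(P_f(R)^\times)^n \to S_n P_f(R)^\times \to (P_f(R)^\times)^n$ is canonically naturally isomorphic to the identity, the other composite is not: the direct-sum functor $(P_f(R)^\times)^n \to S_n P_f(R)^\times$ is essentially surjective (here split-ness is used) but fails to be full, because the automorphism group of a chain of split cofibrations is an upper-triangular ``parabolic'' group, strictly larger than the diagonal product $\prod_i \Aut(X_i/X_{i-1})$. Correspondingly, no choice of splittings can be promoted to a natural transformation: the automorphism group of a fixed chain acts non-trivially on the set of its splittings, so there is no equivariant choice. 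The actual comparison in Waldhausen passes through the additivity theorem and the delooping machinery rather than a term-by-term groupoid equivalence. Since you ultimately delegate the bookkeeping to Waldhausen's text, your final conclusion stands, but the intermediate heuristic sketched in the proposal would not survive if one tried to make it rigorous.
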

We refer the reader to \cite[Section 1.6.5]{wald} for a proof.

\begin{construction}\label{const:map}
Let $\Cc$ be a Waldhausen category. There is a natural map $\Cc^{\times} \to K(\Cc)$.
\end{construction}
\begin{proof}[Details]
By virtue of definition, $\Sigma K(\Cc)=BK(\Cc)$ completes $S_{\bullet}\Cc^{\times}$ to an augmented simplicial object:
$$S_{\bullet}\Cc^{\times} \to \Sigma K(\Cc).$$
Truncating this augmented simplicial diagram, we obtain a commutative square
\[
\xymatrix{
S_1\Cc^{\times} \ar[r] \ar[d] & \{\bullet\} \ar[d] \\
\{\bullet\} \ar[r] & \Sigma K(\Cc)\text{.}
}
\]
The universal property of pullbacks in the $\infty$-category of spaces yields a map $S_\Cc^{\times} \to \Omega\Sigma K(\Cc) \simeq K(\Cc)$. Since $S_1\Cc \cong \Cc$, we obtain the map we wanted.
\end{proof}

\begin{lemma}\label{lemma:comm1}
Let $\Cc \to \Dc$ be an exact functor of Waldhausen categories. Then, there is a homotopy commutative diagram
\[
\xymatrix{
\Cc^{\times} \ar[r] \ar[d] & \Dc^{\times} \ar[d] \\
K(\Cc) \ar[r] & K(\Dc)
}
\]
\end{lemma}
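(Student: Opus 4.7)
The plan is to lift the construction of the vertical maps from Construction \ref{const:map} to a commutative cube whose back face reduces to the desired square.

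First I would observe that an exact functor $F\colon \Cc \to \Dc$ between Waldhausen categories, by definition, preserves cofibrations, weak equivalences, and pushouts along cofibrations. Consequently it induces a morphism of simplicial Waldhausen categories $S_\bullet F\colon S_\bullet\Cc \to S_\bullet\Dc$, and after passing to the subgroupoids of weak equivalences, a map of simplicial spaces $(S_\bullet\Cc)^{\times w} \to (S_\bullet\Dc)^{\times w}$. Geometric realisation and looping then produce the map $K(F)\colon K(\Cc) \to K(\Dc)$, and in particular the augmentations $(S_\bullet\Cc)^{\times w} \to \Sigma K(\Cc)$ and $(S_\bullet\Dc)^{\times w} \to \Sigma K(\Dc)$ used in Construction \ref{const:map} fit into a commutative square of augmented simplicial objects.

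Next I would truncate this square to simplicial degrees $\leq 1$ and use the identification $S_1(-) \cong (-)$ at level $1$. This produces a commutative cube in $\Spaces$ whose four vertical faces compare the truncated simplicial diagrams for $\Cc$ and $\Dc$, and whose top and bottom faces are the pullback squares used to define the maps $\Cc^{\times} \to K(\Cc)$ and $\Dc^{\times} \to K(\Dc)$. By functoriality of the pullback in the $\infty$-category of spaces, the map induced by the square
\[
\xymatrix{
S_1\Cc^{\times} \ar[r] \ar[d] & \{\bullet\} \ar[d] \\
\{\bullet\} \ar[r] & \Sigma K(\Cc)
}
\]
into $\Omega\Sigma K(\Cc) \simeq K(\Cc)$ is natural in the square, yielding the desired homotopy commutative diagram.

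No step is truly hard here; the only point that deserves care is to make sure all identifications in the cube are natural in $F$ before invoking the universal property of the pullback. Provided one sets up the construction of the map $\Cc^{\times} \to K(\Cc)$ functorially in the Waldhausen category $\Cc$ (which is automatic since every piece of the construction, $S_\bullet$, $(-)^{\times w}$, geometric realisation, looping, and the pullback in $\Spaces$, is functorial), the homotopy commutativity of the square is an immediate consequence.
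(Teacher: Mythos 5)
Your argument is correct, and it is precisely the routine functoriality argument the authors had in mind when they wrote ``Omitted'': an exact functor induces a map of simplicial objects $(S_\bullet\Cc)^{\times w}\to(S_\bullet\Dc)^{\times w}$ compatible with the augmentations to $\Sigma K(\Cc)$ and $\Sigma K(\Dc)$, so the universal property of the pullback defining the map of Construction \ref{const:map} is natural in the Waldhausen category. One tiny terminological slip: the top and bottom faces of your cube are not themselves pullback squares but merely commuting squares mapping to the cospan $\{\bullet\}\to\Sigma K(\Cc)\leftarrow\{\bullet\}$ whose pullback is $\Omega\Sigma K(\Cc)$; this does not affect the validity of the argument.
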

\begin{proof}
Omitted.
\end{proof}

\subsection{Zakharevich \texorpdfstring{$K$}{K}-theory and Campbell's \texorpdfstring{$S$}{S}-construction}

The original viewpoint on the $K$-theory space of varieties $K(\Var)$ that was developed in \cite{zak} is based on the categorical notion of \emph{assemblers}. In this article we will use an alternative viewpoint which is due to Campbell (see \cite{cam}). This approach is based on so-called \emph{subtractive} categories (and more generally, on $SW$-categories).

\begin{definition}[Campbell]
A \emph{category with subtraction} $\C$ is a category endowed with a notion of \emph{cofibrations} $\mathsf{cof}$ and \emph{fibrations} $\mathsf{fib}$. We denote arrows in $\mathsf{cof}$ by $\hookrightarrow$ and those in $\mathsf{fib}$ by $\xrightarrow{\circ}$. The following conditions must hold:
\begin{itemize}
\item[(a)] $\C$ has an initial object, called $\emptyset$.
\item[(b)] An isomorphism is a cofibration and a fibration,
\item[(c)] Pullbacks of cofibrations (respectively fibrations) exist and are again cofibrations (respectively fibrations),
\item[(d)] There exists a family of so-called \emph{subtraction sequences} $\{A \hookrightarrow B \xleftarrow{\circ} C\}$ satisfying several conditions (see \cite[Definition 3.7(4)]{cam}).
\end{itemize}
\end{definition}

\begin{definition}
A \emph{subtractive category} is given by a category with subtraction $\C$ satisfying the conditions:
\begin{itemize}
\item[(a)] Pushouts of cofibrations along cofibrations exist. Every morphism in the resulting pushout square is a cofibration, and the square itself is also required to be a pullback square.
\item[(b)] For a cartesian square of cofibrations
\[
\xymatrix{
A \ar@{^(->}[r] \ar@{^(->}[d] & B \ar@{^(->}[d] \\
C \ar@{^(->}[r] & D
}
\]
we have that the induced map $B \sqcup_A C \to D$ is a cofibration.
\item[(c)] For every commutative diagram
\[
\xymatrix{
A' \ar@{^(->}[d]& \ar@{_(->}[l] B' \ar@{^(->}[d]\ar@{^(->}[r] & C' \ar@{^(->}[d] \\
A &  B \ar@{^(->}[r] \ar@{_(->}[l] & C \\
A'' \ar[u]^{\circ}& B'' \ar@{_(->}[l] \ar@{^(->}[r] \ar[u]^{\circ}& C''\ar[u]^{\circ}
}
\]
the pushouts along the rows yield a subtraction sequence
$$A'\sqcup_{B'} C' \hookrightarrow A\sqcup_B C \xleftarrow{\circ} A''\sqcup_{B''} C''\text{.}$$
\end{itemize}
\end{definition}

\begin{rmk}
We will say that a subtractive category is \emph{split}, if every subtraction sequence is isomorphic to a \emph{split subtraction sequence}
$$A \hookrightarrow A \sqcup C \xleftarrow{\circ} C.$$
\end{rmk}

In \cite[Definition 3.29]{cam}, Campbell defines an $S$-construction for subtractive categories. The definition resembles Waldhausen's, with the salient feature being that cofibre sequences are traded for subtraction sequences.
\begin{definition}[Campbell]
The $K$-theory space of a subtractive category $\C$ is defined as $K(\C)=\Omega |S_{\bullet}\C^{\times}|$. 
\end{definition}
We emphasise that $K(\C)$ is endowed with the structure of an infinite loop space (i.e. connective spectrum), according to \cite[Proposition 5.3]{cam}.

Furthermore, Campbell introduces in \cite[Definition 5.9]{cam} the notion of \emph{$W$-exact functors} $\C \to \mathcal{D}$ from a subtractive to a Waldhausen category. Such functors give rise to maps between $S$-constructions, and thus induce a map of $K$-theory spectra (see \cite[Proposition 5.3]{cam})
$$K(\C) \to K(\mathcal{D})\text{.}$$

In fact, we will use the broader concept of \emph{weakly $W$-exact functors} from \cite[Definition 2.17]{cwz}.
\begin{construction}\label{const:map2}
There is a map
$z\colon \C^{\times} \to K(\C)$ such that for every weakly $W$-exact functor $\C \to \mathcal{D}$ we have a homotopy commutative diagram
\[
\xymatrix{
\C^{\times} \ar[r] \ar[d] & {\mathcal{D}}^{\times} \ar[d] \\
K(\C) \ar[r] & K(\mathcal{D})\text{.}
}
\]
\end{construction}
\begin{proof}[Details]
This is analogous to Construction \ref{const:map} and Lemma \ref{lemma:comm1}. One uses that both flavours of the $S$-construction are compatible, which is true by design. The same idea was employed in \cite[Proposition 2.20]{cwz}.
\end{proof}

\section{Retracts from split tori}\label{section_torusconstruction}

\subsection{The Grothendieck spectrum of varieties}\label{sect:grothringspectrum}

Suppose we fix a field $k$. By a $k$-\emph{variety} we mean a reduced finite
type separated scheme over the field $k$. Morphisms between $k$-varieties will
always be assumed to be $k$-morphisms.

We write $\Var$ for the $SW$-category of $k$-varieties, as introduced
in \cite[\S 3]{cam}. The product of $k$-varieties%
\[
X\times Y:=X\times_{k}Y
\]
equips $\Var$ with a multiplicative structure in the sense of
\cite[\S 5.2]{cam}. We write $\mathbf{1}=\operatorname{Spec}(k)$ to denote the neutral element for this monoidal structure.

As is shown in \emph{loc. cit.}, this definition equips $K(\Var)$ with
the structure of an $E_{\infty}$-ring spectrum.


\subsection{Construction of the incoming map}

We recall that $F_{f}(R)$ denotes the category of finitely generated free $R$-modules. The
direct sum renders $(F_{f}(R),\oplus)$ a symmetric monoidal category.

Write $\mathsf{Tori}\subset\mathsf{Var}$ for the full category of split
algebraic $k$-tori (that is: the category of varieties which admit an
isomorphism to $\mathbb{G}_{m}^{n}$ for some $n\geq0$).

Then, there is a natural symmetric monoidal functor%
\[
(F_{f}(\mathbb{Z}),\oplus)^{\times}\longrightarrow(\mathsf{Tori}%
,\times)^{\times}%
\]
sending $\Zb$ to $\G_{m}$.

\begin{rmk}\label{rmk_WhatAreTheMapsForGm}
As an aside: This functor is faithful, but not full. The morphisms in its image are precisely those which preserve the neutral element of the group and hence are a group homomorphism. On the right every morphism can be decomposed into group homomorphisms and translations (for $\Gb_m$ a translation is just the multiplication with an element).
\end{rmk}

We will tacitly identify categories
with their nerves. Thus, the above defines a map of spaces. Recall that once
we restrict to the connected component of $\Zb^{n}$ on the left side,
we obtain the homotopy type of $\BGL_{n}(\Zb)$.

Using $\mathsf{Tori}\subset\mathsf{Var}$ and postcomposing with Construction \ref{const:map2}, we obtain maps
\[
\BGL_n(\Zb) \subset B\Aut_{\Var}(\G_m^n) \to K(\Var).
\]
Letting $\GL_n(\Zb) \subset \GL_{n+1}(\Zb)$ be the standard embedding
\[
A\mapsto%
\begin{pmatrix}  
A & 0\\
0 & 1
\end{pmatrix},
\]
we have a homotopy commutative diagram 
\begin{equation}\label{lcc1}
\xymatrix{
\BGL_n(\Zb) \ar[r] \ar[d] & \BGL_{n+1}(\Zb) \ar[d] \\
\big(K(\Var)_{\loc}\big)^{\circ}_{\G_m^n} \ar[r]^-{\times \G_m}_{\sim } & \big(K(\Var)_{\loc}\big)^{\circ}_{\G_m^{n+1}}
}
\end{equation}
of spaces. Using the canonical equivalence
\begin{equation}\label{lcc2}
\times \G_m^{-n}\colon \big(K(\Var)_{\loc}\big)^{\circ}_{\G_m^n} \simeq \big(K(\Var)_{\loc}\big)^{\circ}_{\mathbf{1}},
\end{equation}
we obtain a morphism of spaces
$$\tilde{i}\colon \BGL(\Zb) = \colim_n \BGL_n(\Zb) \to \big(K(\Var)_{\loc}\big)^{\circ}_{\mathbf{1}}.$$

%

\begin{lemma}\label{lemma:comm3}
There exists a morphism $i\colon K(\Zb)^{\circ}_0 \to (K(\Var)_{\loc},\times)$ of $H$-spaces such that the following diagram
\[
\xymatrix{
\BGL(\Zb) \ar[rd]^{\tilde{i}} \ar[d]_{c} &  \\
K(\Zb)^{\circ}_0 \ar@{-->}[r]^-i & \big(K(\Var)_{\loc}\big)^{\circ}_{\mathbf{1}}
}
\]
commutes.
\end{lemma}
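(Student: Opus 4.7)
The plan is to apply Quillen's universal property (Theorem \ref{thm:magic}) to extend $\tilde{i}$ along $c\colon \BGL(\Zb) \to K(\Zb)^{\circ}_0$. For this I first need to check that the target is an $H$-space: since $K(\mathsf{Var})_{\loc}$ is an $E_\infty$-ring spectrum with respect to $\times$ (Section \ref{sect:grothringspectrum}) and its multiplicative unit $\mathbf{1} = \Spec(k)$ lies in this connected component, $\bigl(K(\mathsf{Var})_{\loc}\bigr)^{\circ}_{\mathbf{1}}$ inherits an $H$-space structure by restricting $\times$. Under the standard identification $K(\Zb)^{\circ}_0 \simeq \BGL(\Zb)^+$ recalled in the paragraph right before Theorem \ref{thm:magic}, the group-completion map $c$ corresponds to the canonical $+$-construction map $k$. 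An application of Theorem \ref{thm:magic} then produces the dashed map $i$ and makes the triangle commute up to homotopy.

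The remaining task is to promote $i$ to a morphism of $H$-spaces, intertwining $\oplus$ on $K(\Zb)^{\circ}_0$ with $\times$ on $\bigl(K(\mathsf{Var})_{\loc}\bigr)^{\circ}_{\mathbf{1}}$. The key observation is that $\tilde{i}$ is already compatible with the block-sum $H$-space structure on $\BGL(\Zb)$: for $A \in \GL_n(\Zb)$ and $B \in \GL_m(\Zb)$, the block-sum $A \oplus B \in \GL_{n+m}(\Zb)$ acts on $\G_m^{n+m} \cong \G_m^n \times \G_m^m$ exactly as $A \times B$. Hence the functor $(F_f(\Zb),\oplus)^{\times} \to (\mathsf{Tori},\times)^{\times}$ is strictly symmetric monoidal. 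Postcomposing with the map $\mathsf{Var}^{\times} \to K(\mathsf{Var})$ from Construction \ref{const:map2}, which is multiplicative because the $S$-construction is naturally compatible with the $\times$-structure on $\mathsf{Var}$, and with the localisation at $\G_m$ (which commutes with $\times$ on the nose), one sees that $\tilde{i}$ lifts to a map of symmetric monoidal objects.

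The step I expect to require the most care is this final $H$-space coherence, because Theorem \ref{thm:magic} as stated only produces a map of plain spaces together with uniqueness at the level of $\pi_\ast$. To extract a genuine $H$-space morphism I would route the argument through the Segal $\Gamma$-space / $E_\infty$-space formalism recalled in Section \ref{sec:prelim}: the symmetric monoidal functor above produces a map of Segal $\Gamma$-spaces, and applying the group-completion functor $\Omega B$ yields an $H$-space map $K(\Zb) \to \bigl(K(\mathsf{Var})_{\loc},\times\bigr)$. Its restriction to the connected component of $0$ gives a candidate $H$-space map $K(\Zb)^{\circ}_0 \to \bigl(K(\mathsf{Var})_{\loc}\bigr)^{\circ}_{\mathbf{1}}$ whose underlying map of spaces agrees with the $i$ produced above by the uniqueness-on-$\pi_\ast$ clause of Theorem \ref{thm:magic}. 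Once this coherence is established, the commutativity of the triangle reduces to a routine diagram chase and the lemma follows.
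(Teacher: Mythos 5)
Your first paragraph reproduces the paper's argument exactly: apply Theorem~\ref{thm:magic} to $\tilde{i}$, noting that the target is an $H$-space because $K(\mathsf{Var})_{\loc}$ is a ring spectrum and $\mathbf{1}$ lies in the relevant component. That is the entirety of the paper's one-sentence proof.

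Your second and third paragraphs go further: you correctly observe that Theorem~\ref{thm:magic} as stated only yields a map of spaces and uniqueness at the level of $\pi_*$, not a morphism of $H$-spaces, so the ``morphism of $H$-spaces'' clause of the lemma is not literally covered by the citation. This is a real terseness in the paper's proof (the paper establishes the needed $H$-space compatibility only later, inside the proof of Claim~\ref{claim}, for a related map $i'$). Your proposed route through the Segal $\Gamma$-space/group-completion formalism is the right idea for filling it. Two caveats, though. First, your final reconciliation step does not close the loop: agreement on $\pi_*$ does not imply two maps are homotopic, so you cannot conclude that the $\Gamma$-space-produced $H$-space map ``agrees with'' the $i$ extracted from Theorem~\ref{thm:magic}; the cleaner move is simply to \emph{define} $i$ as the map coming out of the $\Gamma$-space construction and verify directly that the triangle with $\tilde{i}$ commutes, bypassing Theorem~\ref{thm:magic} altogether. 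Second, the phrase ``the block-sum $H$-space structure on $\BGL(\Zb)$'' is a slight overstatement: block sum does not descend to an $H$-space structure on $\colim_n\BGL_n(\Zb)$ before group completion; what you really have is a symmetric monoidal structure on the groupoid $\coprod_n\B\GL_n(\Zb)$, and the $\Gamma$-space machinery turns that into an $H$-space structure on the completion. Also keep in mind that the shift by $\G_m^{-n}$ from \eqref{lcc2} has to be woven into the $\Gamma$-space map so that it lands in the component of $\mathbf{1}$; in the paper this is handled via the $\GL_1(\cdot)_{+1}$ construction of Definition~\ref{defi:GL11} rather than the bare symmetric monoidal functor $(F_f(\Zb),\oplus)^\times\to(\mathsf{Tori},\times)^\times$.
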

\begin{proof}
This is a direct consequence of Theorem \ref{thm:magic} (the almost-universal property of the Plus-construction) applied to $\tilde{i}$.
\end{proof}

\subsection{Constructions from the theory of mixed Hodge structures}\label{subsect_MHSRealization}

Suppose $k=\mathbb{C}$. Let ${\MHS_{\Zb}}$ denote the category of
mixed Hodge structures with $\mathbb{Z}$-coefficients as introduced by
Deligne. This is an abelian category by \cite[Theorem 2.3.5]{delignehodge2}.

A mixed Hodge structure $M$ is called \emph{effective} if $F^{1}=0$. This
means that the Hodge decomposition of each graded weight piece
$\operatorname*{Gr}_{w}^{W}M$ decomposes as%
\[
M\otimes\mathbb{C\cong}\bigoplus_{p+q=w}M^{p,q}%
\]
with $p,q\leq0$.

\begin{rmk}
\label{remark_NoPositiveWeightsInEffectiveMHS}In particular, a mixed effective
Hodge structure has no graded pieces of strictly positive weight.
\end{rmk}

The full category ${\MHS^{\eff}_{\Zb}}\subset{\MHS_{\Zb}}$ of effective polarizable mixed Hodge structures is closed under
extensions and therefore the family of all exact sequences, restricted to this subcategory, equips ${\MHS^{\eff}_{\Zb}}$ with an exact structure (\cite[Lemma 10.20]{MR2606234}).
The inclusion as a subcategory then is an exact functor and thus yields a map%
\[
K({\MHS^{\eff}_{\Zb}})\longrightarrow K({\MHS_{\Zb}})
\]
in the sense of Example \ref{example_ExactCatsAreWaldhausen}.

In \cite[\S 5]{paperletto} a realisation functor%
\[
R:\mathsf{Var}\longrightarrow{\MHS^{\eff}_{\Zb}}%
\]
is constructed. It will be useful to recall a little bit how this is set up.
The cited paper first constructs a weakly $W$-exact functor to Voevodsky mixed
motives%
\begin{align}
\mathsf{Var}  & \longrightarrow\mathcal{DM}_{gm,\acute{e}t}^{\eff}%
(\mathbb{C};\mathbb{Z})\label{lz1}\\
V  & \longmapsto M^{c}(V)\nonumber
\end{align}
where $\mathcal{DM}_{gm,\acute{e}t}^{\eff}$ refers to the DG\ category of
\'{e}tale effective geometric mixed motives (over the complex numbers as the
base field and with $\mathbb{Z}$-coefficients) and $M^{c}(V):=z_{equi}(V,0)$
denotes the sheaf of equidimensional cycles. We will not recall what this is,
as we may use it as a blackbox for the purposes of this paper. It is a
concrete representative of the compactly supported motive of $X$%
.\footnote{Readers who are not familiar with this may imagine this as the
universal gadget which is able to recover the values of any compactly
supported cohomology theory of the variety.} As a second step, it composes
this construction with Vologodsky's construction of the mixed Hodge
realisation for motives (\cite[\S 2.8]{MR3004172}). The latter takes the shape of a DG
quasi-functor%
\begin{equation}
\mathcal{DM}_{gm,\acute{e}t}^{\eff}(\mathbb{C};\mathbb{Z})\longrightarrow
{\MHS^{\eff}_{\Zb}}\text{.}\label{lz2}%
\end{equation}
Even though the first functor from \eqref{lz1} goes from an
$SW$-category to a DG category and the latter, \eqref{lz2}, is between
DG\ categories, their composite induces a map%
\begin{equation}
K(\mathsf{Var})\longrightarrow K({\MHS^{\eff}_{\Zb}}%
)\text{.}\label{eq_HodgeRealiz}%
\end{equation}
We refer to \cite{paperletto} for the details of this construction.

\begin{example}
[{\cite[\S 5]{paperletto}}]On $K_{0}$ this functor sends a proper smooth
variety $V\in\mathsf{Var}$ to $\sum_{i}[H_{i}(V_{\mathbb{C}},\mathbb{Z})]$,
where $V_{\mathbb{C}}$ refers to the complex manifold attached to $V$ and each
homology group $H_{i}$ gets equipped with its canonical Hodge structure.
\end{example}


Let us quickly recall the categorical structures underlying K\"{u}nneth-type theorems. Recall that as part of the datum of a strong monoidal functor $F\colon
(\mathcal{A},\otimes,\mathbf{1})\rightarrow(\mathcal{A}^{\prime}%
,\otimes^{\prime},\mathbf{1}^{\prime})$ we are given a natural transformation
between the two functors%
\[
\mathcal{A}\times\mathcal{A}\longrightarrow\mathcal{A}^{\prime}%
\]
given by $F(-)\otimes^{\prime}F(-)$ and $F(-\otimes-)$ respectively. That is,
for any objects $W,X\in\mathcal{A}$ we are provided with an isomorphism%
\begin{equation}
\phi_{W,X}\colon F(W)\otimes^{\prime}F(X)\overset{\sim}{\longrightarrow
}F(W\otimes X)\label{l_nattransfstrongmonoidal}%
\end{equation}
satisfying the usual axioms.

There is a strong monoidal functor from the category of (at first smooth)
varieties $Sm$ with the usual product of varieties to Voevodsky mixed motives
with the DG\ tensor structure of \cite[\S 2.3]{MR2399083},%
\[
(Sm,\times_{k})\longrightarrow(\mathcal{DM}_{\acute{e}t}^{\operatorname*{eff}%
}(k;\mathbb{Z}),\otimes)\text{.}%
\]
The strong monoidal functor is implicit \textit{loc. cit.} (first
\cite[\S 2.2]{MR2399083} sets up a DG tensor structure on $\mathcal{P}%
_{\operatorname*{tr}}$ in the notation \textit{loc. cit.}, and establishes a
strong monoidal functor from $Sm$, and then \cite[\S 2.3]{MR2399083} forms
$\mathcal{DM}_{\acute{e}t}^{\operatorname*{eff}}(k;\mathbb{Z})$, along with
its DG\ tensor structure, from $\mathcal{P}_{\operatorname*{tr}}$ as a
quotient by a tensor ideal).

The extension of the functor $Sm\rightarrow\mathcal{DM}_{\acute{e}%
t}^{\operatorname*{eff}}(k;\mathbb{Z})$ to all varieties is uniquely
determined by enforcing $cdh$ descent \cite[\S 6.9.2, (i)]{MR2399083} (or
$ldh$ \cite{MR3673293} in the situation of positive characteristic and motives
with $\mathbb{Z}[\frac{1}{p}]$-coefficients\footnote{The remark about positive
characteristic is not relevant in this section as we work towards results over
the complex numbers, but we shall later return to this, so it is useful to
point this out already here.}). This induces the corresponding extension of
the monoidal functor%
\begin{equation}\label{l_DiscussCDHDescentExtension}
(\mathsf{Var},\times_{k})\longrightarrow(\mathcal{DM}_{\acute{e}%
t}^{\operatorname*{eff}}(k;\mathbb{Z}),\otimes)\text{.}%
\end{equation}
As a second step, there is a strong monoidal functor from mixed motives to
mixed Hodge structures with the standard tensor product of mixed Hodge
structures.%
\[
(\mathcal{DM}_{\acute{e}t}^{\operatorname*{eff}}(k;\mathbb{Z}),\otimes
)\longrightarrow(\mathsf{MHS}_{\mathbb{Z}}^{\operatorname*{eff}},\otimes)
\]
Composing both natural transformations and fixing the object $X$, this
restricts to a natural transformation (and indeed equivalence) of the two functors $\mathsf{Var}%
\rightarrow\mathsf{MHS}_{\mathbb{Z}}^{\operatorname*{eff}}$ given by%
\begin{equation}
W\mapsto R(W\times X)\qquad\text{and}\qquad W\mapsto R(W)\otimes
R(X)\text{.}\label{l_KuennethProperty}%
\end{equation}
We shall refer to this natural equivalence in the following proof.

The above only discussed the natural transformation in Equation
\ref{l_nattransfstrongmonoidal}. We leave an analogous discussion for the
natural transformation of the tensor units, which is also part of a monoidal
functor, to the reader.



\begin{lemma}\label{lemma_commute_realiz}
For every $X \in \Var$ we have a commutative diagram 
\[
\xymatrix{
K(\Var) \ar[r]^{X \times ? } \ar[d] & K(\Var) \ar[d] \\
K(\MHS_{\Zb}) \ar[r]^{R(X) \otimes ?} & K(\MHS_{\Zb})
}
\]
in the homotopy category of spaces.
\end{lemma}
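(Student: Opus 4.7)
The plan is to upgrade the K\"unneth-type natural equivalence \eqref{l_KuennethProperty} to a homotopy of $K$-theory maps. After fixing $X$, the clockwise composite in the square is induced by the endofunctor $W \mapsto R(X \times W)$ of $\mathsf{Var}$ into $\MHS^{\eff}_{\Zb}$ (followed by the exact inclusion into $\MHS_{\Zb}$), while the counter-clockwise composite is induced by $W \mapsto R(X) \otimes R(W)$. The strong monoidal structure of $R$, recalled via \eqref{l_nattransfstrongmonoidal}, \eqref{l_DiscussCDHDescentExtension} and \eqref{l_KuennethProperty}, supplies a natural isomorphism $\phi_{X,W}$ between these two functors, and the entire content of the lemma is that this natural isomorphism descends to a homotopy between the corresponding maps of $K$-theory spaces.

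To achieve this I would proceed in three steps. First, verify that $X \times ?$ is a weakly $W$-exact endofunctor of the $SW$-category $\mathsf{Var}$: products with a fixed variety preserve the initial object, cofibrations, fibrations and subtraction sequences, all of which follow from stability under base change. The induced self-map of $K(\mathsf{Var})$ is then the top horizontal arrow by the mechanism behind Construction \ref{const:map2}. Second, verify that $R(X) \otimes ?$ is an exact endofunctor of $\MHS_{\Zb}$; here exactness reduces to flatness of the underlying $\Zb$-module of $R(X)$, which holds because the construction in \cite{paperletto} equips $R(V)$ with a torsion-free lattice coming from singular (co)homology. Third, invoke the general principle that a natural isomorphism between (weakly) $W$-exact or exact functors induces a simplicial homotopy between the associated maps of Waldhausen or Campbell $S_{\bullet}$-constructions, and hence a homotopy between the induced maps on $K$-theory; applied to $\phi$, this yields the required homotopy commutative square.

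The main obstacle I expect is the translation of the categorical natural transformation into a simplicial homotopy at the level of $S_{\bullet}$-constructions in a setting that mixes an $SW$-categorical source with an exact categorical target. This is the same kind of compatibility used implicitly in Construction \ref{const:map2} and Lemma \ref{lemma:comm1}, and the design of the Campbell formalism ensures it goes through, but formulating it unambiguously across the two different $K$-theoretic set-ups requires some care. Once that bridge is in place, the K\"unneth equivalence \eqref{l_KuennethProperty} produces the desired homotopy commutative square with essentially no further work.
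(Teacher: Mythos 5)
Your proposal follows essentially the same route as the paper's proof: realise both composites as weakly $W$-exact functors and apply the natural isomorphism furnished by the strong monoidal structure of the realisation (the K\"unneth equivalence \eqref{l_KuennethProperty}) to conclude that the induced maps on $K$-theory are homotopic. The paper does this slightly differently at the source — it factors through Voevodsky motives by taking $V \mapsto M^c(V \times X)$ as the weakly $W$-exact functor and then postcomposing with the (suitably rectified) Vologodsky realisation, rather than treating $X \times ?$ as an endofunctor of $\Var$ directly — and it spends some effort on the technical point that a DG \emph{quasi}-functor must be replaced, via a zig-zag of DG equivalences, by a genuine DG functor before one can speak of weak $W$-exactness of the composite. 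Your Step 3 ``general principle'' is exactly the paper's Step 3, so the substance matches.

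One place where your justification is off: you claim exactness of $R(X) \otimes {?}$ reduces to flatness of the underlying $\Zb$-module of $R(X)$, ``which holds because \cite{paperletto} equips $R(V)$ with a torsion-free lattice coming from singular (co)homology.'' This is not true in general — singular (co)homology of complex varieties can certainly carry torsion (Enriques surfaces, for instance), so $R(V)$ need not have a torsion-free underlying lattice. What actually makes the argument work is that everything is happening in a DG / Waldhausen setting: $R(X)$ is a bounded complex and $\otimes R(X)$ is taken in the derived (or cofibrantly modeled) sense, so it preserves cofibrations and weak equivalences regardless of any underived flatness. The paper's own parenthetical ``(as the tensoring is an exact endofunctor of $\MHS^{\eff}_{\Zb}$)'' is terse on this point, but it is not invoking flatness in the sense you propose, and you should not lean on a flatness claim that fails. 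Beyond this, the approach is sound and matches the paper.
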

\begin{proof}
(If it were in the literature that the mixed Hodge realisation is a map of $E_{\infty}$-ring spectra, this would be immediate. As we only need compatibility with the multiplication with a single variety, we get away with proving a much weaker statement.) If $F\colon\mathsf{C}\rightarrow\mathcal{D}$ is a weakly $W$-exact functor
from an $SW$-category $\mathsf{C}$ to a Waldhausen category $\mathcal{D}$, and
$G\colon\mathcal{D}\rightarrow\mathcal{D}^{\prime}$ an exact functor of
Waldhausen categories, then $G\circ F$ is again a weakly $W$-exact functor. If
$G\colon\mathcal{D}\rightarrow\mathcal{D}^{\prime}$ is a DG\ \textit{quasi-}%
functor between DG\ categories, we can canonically regard $\mathcal{D}$ and
$\mathcal{D}^{\prime}$ as Waldhausen categories through the device of Example
\ref{example_DGCatsAreWaldhausen}. Yet, it is not necessarily true that
$G\circ F$ is a genuine functor. However, we may by abuse of notation replace
${\MHS^{\eff}_{\Zb}}$ through a zig-zag of DG
equivalences\footnote{i.e. genuine DG\ functors which induce an equivalence of
DG categories.} by a DG\ category so that Vologodsky's mixed Hodge realisation%
\begin{equation}
r\colon\mathcal{DM}_{gm,\acute{e}t}^{\eff}(\mathbb{C};\mathbb{Z}%
)\longrightarrow{\MHS^{\eff}_{\Zb}}\label{lz3a}%
\end{equation}
is a genuine DG\ functor. Then, also the realisation followed by tensoring with $R(X)$,
i.e. the composition%
\begin{equation}
r_{X}\colon\mathcal{DM}_{gm,\acute{e}t}^{\eff}(\mathbb{C};\mathbb{Z}%
)\underset{r}{\longrightarrow}{\MHS^{\eff}_{\Zb}}\underset{\otimes
R(X)}{\longrightarrow}{\MHS^{\eff}_{\Zb}}\text{,}\label{lz3}%
\end{equation}
is a genuine DG\ functor (as the tensoring is an exact endofunctor of ${\MHS^{\eff}_{\Zb}}$). This replacement being done, $G\circ F$ is also
a weakly $W$-exact functor taking values in this particular model of
${\MHS^{\eff}_{\Zb}}$. \textit{(Step 1)}\ Consider the square in
the claim. We first consider the composition of maps going right and then
down, call this $RD$. This map steps from the multiplication%
\[
K(\mathsf{Var})\longrightarrow K(\mathsf{Var})
\]
coming from the $E_{\infty}$-ring structure discussed in
\S \ref{sect:grothringspectrum} and is then followed by the map of Equation
\ref{eq_HodgeRealiz}. We will provide an alternative description of this map.
Slightly modifying the construction of the mixed Hodge realisation functor, we
claim that%
\begin{align*}
R_{X}\colon\mathsf{Var}  & \longrightarrow\mathcal{DM}_{gm,\acute{e}t}%
^{\eff}(\mathbb{C};\mathbb{Z})\\
V  & \longmapsto M^{c}(V\times X)
\end{align*}
is a weakly $W$-exact functor. This can be shown by copying the proof in as much as
it differs from \eqref{lz1} just by the additional product with $X$
inside. Composing this with \eqref{lz3a} this defines a weakly
$W$-exact functor. Tracing what we have done, the induced map $K(\mathsf{Var}%
)\rightarrow K({\MHS^{\eff}_{\Zb}})$ agrees with $RD$.
\textit{(Step 2)} On the other hand, the unmodified mixed Hodge realisation%
\[
R\colon\mathsf{Var}\longrightarrow\mathcal{DM}_{gm,\acute{e}t}^{\eff}%
(\mathbb{C};\mathbb{Z})
\]
followed by $r_{X}$ of \eqref{lz3} is also a weakly $W$-exact functor.
It is clear that the induced map $K(\mathsf{Var})\rightarrow K({\MHS^{\eff}_{\Zb}})$ agrees with the map $DR$ in the square, where we first
go down and then to the right. \textit{(Step 3)} The commutativity of the
square in the claim amounts to showing that $RD$ and $DR$ are homotopic. By
Steps 1 and 2 both maps are induced from weakly $W$-exact functors and
therefore we only need to exhibit a natural transformation between these
functors,%
\[
R(-\times X)\Rightarrow R(-)\otimes R(X)\text{,}%
\]
which we had constructed in \eqref{l_KuennethProperty}.
\end{proof}

\begin{definition}\label{defi:loc-maps}
We define a map $\tilde{R}$ by means of the universal property of colimits
\[
\xymatrix{
K(\Var)_{\loc} \ar@{=}[d] \ar[r]^{\tilde{R}} & K(\MHS^{\eff}_{\Zb})_{\loc} \ar@{=}[d] \\
K(\Var)_{\loc}\ar@{=}[d] \ar[r]^-{\tilde{R}} &  K(\MHS^{\eff}_{\Zb})[R(\G_m)^{-1}] \ar@{=}[d] \\
\colim [K(\Var) \xrightarrow{\G_m \times ?} \cdots] \ar[r]^-R & \colim [K(\MHS^{\eff}_{\Zb}) \xrightarrow{R(\G_m) \otimes ?} \cdots].
}
\]
\end{definition}

\begin{definition}\label{lemma:wnfactors}
For $n \leq 0$ let $w_n\colon K(\MHS^{\eff}_{\Zb}) \to K(\Qb)$ denote the map sending a mixed Hodge structure to the underlying $\Qb$-vector space of its weight $n$ part. 


\begin{proof}
We need to justify why this map exists. Indeed, the functor%
\begin{equation}
\operatorname*{Gr}\nolimits_{n}^{W}\colon\ \MHS_{\mathbb{Z}%
}\longrightarrow\text{pure }\mathbb{Q}\text{-Hodge structures of weight
}n\label{lGrWExact}%
\end{equation}
is exact by \cite[Theorem 2.3.5, (iv)]{delignehodge2}. Precompose this with
the exact inclusion of a full subcategory $\MHS_{\mathbb{Z}}%
^{\eff}\rightarrow\ \MHS_{\mathbb{Z}}$, and postcompose it with the
exact functor forgetting the Hodge structure and just keeping the underlying
$\mathbb{Q}$-vector space. As exact functors induce maps in $K$-theory, the
desired map is constructed.
\end{proof}

\begin{rmk}
[\cite{MR2606234},\ Example 13.12]We recall that if $\mathcal{A}$ is an
abelian category and $\operatorname*{Fil}\mathcal{A}$ the exact category of
filtered objects in $\mathcal{A}$, the functor $\operatorname*{Gr}%
\nolimits_{n}\colon\operatorname*{Fil}\mathcal{A}\rightarrow\mathcal{A}$
usually fails to be exact. The above construction of $w_{n}$ relies on the
crucial fact that morphisms of mixed Hodge structures are automatically strict
(this also lies at the heart of the proof that $\MHS_{\mathbb{Z}}$ is
an abelian category and not merely exact).
\end{rmk}

\begin{rmk}
\label{rmk_AltDescriptionWn}There is an alternative construction of the maps
$w_{n}$. Denote by $\mathbf{S}_{n}$ the full subcategory of mixed Hodge
structures of weights $\leq n$. This is a Serre subcategory in $\MHS_{\mathbb{Z}}$.

Moreover, the tensor product satisfies%
\[
\mathbf{S}_{n}\times\mathbf{S}_{m}\longrightarrow\mathbf{S}_{n+m}\text{,}%
\]
so $\mathbf{S}_{0}$ is closed under the monoidal structure, contains the tensor unit, and for negative $n$, the $\mathbf{S}_{n}$ are tensor ideals in $\mathbf{S}_{0}$.

The exact functor of \eqref{lGrWExact} induces an exact
functor%
\[
\operatorname*{Gr}\nolimits_{n}^{W}\colon\mathbf{S}_{n}/\mathbf{S}%
_{n-1}\longrightarrow\text{pure }\mathbb{Q}\text{-Hodge structures of weight
}n\text{.}%
\]
This is an equivalence of abelian categories. An inverse functor can be
provided as follows: Choose a $\mathbb{Z}$-lattice $H_{\mathbb{Z}}$ in the
$\mathbb{Q}$-Hodge structure $H_{\mathbb{Q}}$. Then, $H_{\mathbb{Z}}$, equipped
with the same weight filtration of $H_{\mathbb{Q}}$ on $H_{\mathbb{Z}}%
\otimes\mathbb{Q}$, and the same Hodge filtration on $H_{\mathbb{Z}}%
\otimes\mathbb{C}$, lies in $\mathbf{S}_{n}$. If we had chosen a sublattice
$H_{\mathbb{Z}}^{\prime}\subseteq H_{\mathbb{Z}}$ instead, these lattices are
of finite-index and thus $H_{\mathbb{Z}}/H_{\mathbb{Z}}^{\prime}$ is a finite
abelian group (and thus canonically a mixed Hodge structure as $(H_{\mathbb{Z}%
}/H_{\mathbb{Z}}^{\prime})\otimes\mathbb{Q}=0$) and $H_{\mathbb{Z}%
}/H_{\mathbb{Z}}^{\prime}\in\mathbf{S}_{n-1}$, i.e. $H_{\mathbb{Z}}\cong
H_{\mathbb{Z}}^{\prime}$ in the quotient $\mathbf{S}_{n}/\mathbf{S}_{n-1}$. As
any two lattices have a common sublattice, this proves that this inverse
functor is well-defined. As $\MHS_{\mathbb{Z}}^{\eff}\cap\mathbf{S}%
_{n}=\MHS_{\mathbb{Z}}^{\eff}\cap\mathbf{S}_{0}$ for all $n\geq0$ by
Remark \ref{remark_NoPositiveWeightsInEffectiveMHS}, applying the localisation
sequence to this filtration, we may split off the $K$-theory of pure
$\mathbb{Q}$-Hodge structures of weights $0,-1,-2,$\ldots\ Composing with the
forgetful functor to $\mathbb{Q}$-vector spaces again yields the maps $w_{n}$.
\end{rmk}


\begin{lemma}\label{lemma_factoriz}
There exists a factorisation
\[
\xymatrix{
K(\MHS^{\eff}_{\Zb}) \ar[rd]^{w_n} \ar[d] & \\
K((\MHS^{\eff}_{\Zb}))_{\loc} \ar@{-->}[r] & K(\Qb).
}
\]
\end{lemma}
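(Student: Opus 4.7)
The plan is to construct the dashed map via the universal property of the filtered homotopy colimit
\[
K(\MHS^{\eff}_{\Zb})_{\loc} = \colim\bigl[K(\MHS^{\eff}_{\Zb}) \xrightarrow{\alpha} K(\MHS^{\eff}_{\Zb}) \xrightarrow{\alpha} \cdots\bigr],
\]
where $\alpha$ denotes the endomorphism $R(\G_m) \otimes (-)$. Giving a map out of this colimit amounts to giving a compatible tower of maps $f_m \colon K(\MHS^{\eff}_{\Zb}) \to K(\Qb)$ with $f_0 = w_n$ and $f_{m+1} \circ \alpha \simeq f_m$ in the homotopy category; this is what I would produce.

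First, I would analyse how $\alpha$ interacts with $w_n$. By additivity of $R$ along $\G_m = \Ab^1 \setminus \{0\}$ and $\Ab^1 = \Pb^1 \setminus \{\infty\}$, combined with $R(\Pb^1) = \Qb \oplus \Qb(1)$, one obtains $[R(\G_m)] = [\Qb(1)] - [\Qb]$ in $K_0(\MHS^{\eff}_{\Zb})$. At the categorical level, this comes from a cofiber sequence of exact endofunctors on $\MHS^{\eff}_{\Zb}$ built out of $\Qb \otimes (-)$ and $\Qb(1) \otimes (-)$. Post-composing with $w_n$ and noting that the Tate twist shifts weights by $-2$ while preserving the underlying $\Qb$-vector space once the Hodge structure is forgotten, this gives a homotopy
\[
w_n \circ \alpha \simeq w_{n+2} - w_n
\]
of spectrum maps, where the subtraction uses the $H$-space structure on $K(\Qb)$.

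Next, I exploit effectivity. For any fixed $n \leq 0$ and $M \in \MHS^{\eff}_{\Zb}$, we have $w_{n+2j}(M) = 0$ whenever $n + 2j > 0$, so any $\Zb$-linear combination $\sum_{j \geq 0} c_j\, w_{n+2j}$ is finite on each $M$ and defines a bona fide map $K(\MHS^{\eff}_{\Zb}) \to K(\Qb)$. I would then set
\[
f_0 := w_n, \qquad f_m := (-1)^m \sum_{j \geq 0} \binom{m+j-1}{m-1} w_{n+2j} \quad (m \geq 1),
\]
and verify $f_m \circ \alpha \simeq f_{m-1}$ by a short reindexing of the sum using Pascal's identity $\binom{m+j-1}{m-1} - \binom{m+j-2}{m-1} = \binom{m+j-2}{m-2}$. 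The compatible tower $\{f_m\}$ then induces the desired dashed arrow by universal property.

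The main obstacle is the first step: lifting the $K_0$-level identity $[R(\G_m)] = [\Qb(1)] - [\Qb]$ to a genuine homotopy of maps of spectra. This requires exhibiting $R(\G_m)$ as a specific two-term complex in the DG model of $\MHS^{\eff}_{\Zb}$ (arising, say, from the closed immersion $\{\infty\} \hookrightarrow \Pb^1$), and invoking the additivity theorem for $K$-theory applied to the resulting cofiber sequence of exact endofunctors. Once the homotopy $w_n \circ \alpha \simeq w_{n+2} - w_n$ is secured, the combinatorial steps verifying the tower relations are routine.
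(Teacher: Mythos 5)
Your proposal is correct, but it takes a genuinely different route from the paper's. The paper passes to the Serre quotient $\MHS^{\eff}_{\Zb}/\mathbf{S}_{n-1}$ (objects of weight $\leq n-1$), observes that $\Lb$ is nilpotent there so $[\Lb]-1$ is inverted by a finite geometric series in $K_0$, and then invokes two pieces of general machinery: the localisation theorem for $K$-theory of abelian categories (to factor $w_n$ through the quotient) and the universal property of ring-spectrum localisation (which only needs invertibility in $\pi_0$, so no spectrum-level homotopy has to be manufactured). You instead build the map out of the telescope directly, by writing down an explicit compatible tower $f_m$ given by the ``negative binomial'' power series for $(-1)^m(1-T)^{-m}$ in the weight-shift operator $T$; effectivity truncates this to a finite sum of $w_{n+2j}$'s, and the Pascal-identity bookkeeping checks out. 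The crucial input you correctly single out, the spectrum-level homotopy $w_n\circ\alpha\simeq w_{n+2}-w_n$, can be secured by applying the additivity theorem to the cofibre sequence of exact endofunctors $M[1]\to R(\G_m)\otimes M\to M(1)[2]$ (after passing to the DG model of bounded complexes, Gillet--Waldhausen) — this is where your approach pays a price the paper avoids, trading the localisation theorem for the additivity theorem. Two smaller remarks: the sums $\sum_j c_j\, w_{n+2j}$ are finite as maps of spectra, not merely pointwise, since $w_{n+2j}$ is induced by the zero functor whenever $n+2j>0$, so no convergence issue arises; and producing a map out of the sequential homotopy colimit from a tower of homotopy-commuting triangles is legitimate for existence (the Milnor exact sequence makes $[\colim X_i,Y]\to\lim[X_i,Y]$ surjective; only uniqueness is threatened by $\lim^1$). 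So the argument goes through; it is simply more computational and less structural than the one in the paper, both hinging on effectivity — the paper to force nilpotence of $\Lb$ in a quotient, you to force the series to terminate.
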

\begin{proof}  
As in Remark \ref{rmk_AltDescriptionWn}, denote by $\mathbf{S}_{n}$ the full subcategory of $\MHS^{\eff}_{\Zb}$, given by objects of weights $\leq n$. The quotient category $\MHS^{\eff}_{\Zb}/\mathbf{S}_{n-1}$ has $K_0$ where the class of $H_*^{\rm BM}(\G_m)=\Lb -1$ is invertible by a truncated geometric series.

Namely, for $m>\left\lceil \frac{n}{2}\right\rceil $%
\[
(1-\mathbb{L})(1+\mathbb{L}+\mathbb{L}^{2}+\cdots+\mathbb{L}^{m-1}%
)=1-\mathbb{L}^{m}=1
\]
since $\mathbb{L}^{m}$ has weight $-2m<n$.

The universal property of localisation yields a factorisation
\[
\xymatrix{
K(\MHS^{\eff}_{\Zb}) \ar[rd]^{w_n} \ar[d] & \\
K((\MHS^{\eff}_{\Zb})_{\loc}) \ar@{-->}[r] & K(\MHS^{\eff}_{\Zb}/\mathbf{S}_{n-1})\text{.}
}
\]
We conclude the proof by observing that $w_n$ factorises through $K(\MHS^{\eff}_{\Zb}/\mathbf{S}_{n-1})$, since $w_n$ vanishes on $K(\mathbf{S}_{n-1})$. The localisation theorem in $K$-theory, applied to the pair $\mathbf{S}_{n-1} \hookrightarrow \MHS^{\eff}_{\Zb} \twoheadrightarrow \MHS^{\eff}_{\Zb}/\mathbf{S}_{n-1}$, then yields the requisite factorisation $w_n\colon K(\MHS^{\eff}_{\Zb}/\mathbf{S}_{n-1}) \to K(\Qb)$. (See Remark \ref{rmk_AltDescriptionWn})
\end{proof}

By abuse of notation we will denote the factorisation above as well by
$$w_n \colon K(\MHS_{\Zb})_{\loc} \to K(\Qb).$$
\end{definition}

\subsection{Construction of the outgoing map}

Below we will use a plausible \emph{ansatz} to produce a retraction.


\begin{definition}\label{def_outgoing}
We define $r\colon K(\Var)_{\loc} \to K(\Qb)$ to be the composition $w_{-2} \circ \tilde{R}$.
\end{definition}

\begin{proposition}\label{prop_retract}
The map
\[
r \circ i|_{K(\Zb)^{\circ}_0} \colon K(\Zb)^{\circ}_0 \longrightarrow K(\Qb)^{\circ}_0
\]
induces the same maps on all homotopy groups $\pi_{\ast }$ as
\[
\varphi \colon K(\mathbb{Z})\longrightarrow K(\mathbb{Q})\qquad M\longmapsto M\otimes
\mathbb{Q},
\]
restricted to the connected component of $0$.
\end{proposition}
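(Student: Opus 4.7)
\textbf{Reduction via the plus-construction.} Theorem \ref{thm:magic} (Quillen) ensures that the graded $\pi_*$-map of any extension $g\colon\BGL(\Zb)^+\to H$ of a given $f\colon\BGL(\Zb)\to H$ depends only on $f$. Both $r\circ i|_{K(\Zb)^\circ_0}$ and $\varphi|_{K(\Zb)^\circ_0}$ are such extensions: the first extends $r\circ\tilde i$ by Lemma \ref{lemma:comm3}, while the second extends the stabilised standard inclusion
\[
\psi\colon \BGL(\Zb)=\colim_n\BGL_n(\Zb)\longrightarrow\colim_n\BGL_n(\Qb)\longrightarrow K(\Qb)^\circ_0
\]
induced by $\Zb\hookrightarrow\Qb$. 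It therefore suffices to produce a homotopy $r\circ\tilde i\simeq\psi$ as maps $\BGL(\Zb)\to K(\Qb)$.

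\textbf{Unwinding the composition.} For $A\in\GL_n(\Zb)$, Remark \ref{rmk_WhatAreTheMapsForGm} identifies $A$ with a $\Var$-automorphism of $\G_m^n$. The map $\tilde i$ sends this to the corresponding loop in $(K(\Var)_{\loc})^\circ_{\mathbf 1}$, obtained via the stabilisation square \eqref{lcc1} followed by the basepoint translation \eqref{lcc2}. Applying $\tilde R$, using Definition \ref{defi:loc-maps} together with Lemma \ref{lemma_commute_realiz}, produces the induced automorphism $R(A)$ of $R(\G_m^n)$ in $K(\MHS^{\eff}_\Zb)_{\loc}$. Finally $w_{-2}$ factors through the quotient $\MHS^{\eff}_\Zb/\mathbf S_{-3}$ of Lemma \ref{lemma_factoriz} (in which $R(\G_m)=\Lb-1$ is a unit via the relation $(\Lb-1)(\Lb+1)\equiv-1$) and extracts the $\Qb$-vector space underlying the weight $-2$ summand.

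\textbf{Identifying the weight $-2$ representation.} By the Künneth equivalence \eqref{l_KuennethProperty}, $R(\G_m^n)\cong R(\G_m)^{\otimes n}$. In the quotient $\MHS^{\eff}_\Zb/\mathbf S_{-3}$ the object $R(\G_m)$ splits as the weight $0$ summand $\mathbf 1$ plus the weight $-2$ summand $\Lb=\Qb(1)$, so the weight $-2$ summand of $R(\G_m)^{\otimes n}$ is the direct sum of the $n$ tensor patterns in which exactly one position contributes $\Lb$ and the remaining $n-1$ positions contribute $\mathbf 1$. By functoriality of Künneth the geometric $\GL_n(\Zb)=\Aut_{\mathrm{grp}}(\G_m^n)$-action permutes these $n$ summands via its standard representation on the cocharacter lattice $\Zb^n$. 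Forgetting the Hodge structure, $(w_{-2}\circ\tilde R\circ\tilde i)(A)$ therefore returns the class of the standard automorphism $A\colon\Qb^n\to\Qb^n$ in $K_*(\Qb)$, which matches $\psi(A)$. Naturality of \eqref{lcc1} together with functoriality of $R$ and $w_{-2}$ makes this identification compatible with the stabilisation $\GL_n(\Zb)\hookrightarrow\GL_{n+1}(\Zb)$, and passing to the colimit delivers the desired homotopy.

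\textbf{Main obstacle.} The delicate step is the identification in the third paragraph: one must verify that the Künneth equivalence transports the geometric $\Aut_{\mathrm{grp}}(\G_m^n)$-action on $R(\G_m^n)$ to the expected permutation-by-the-standard-representation on the weight $-2$ summand, and that the localisation together with the basepoint translation $\times[\G_m^{-n}]$ does not introduce a hidden Tate twist or a sign that would distort this $\GL_n(\Zb)$-representation before one arrives in $K(\Qb)$. These checks rest on the monoidality of the realisation functor encoded in \eqref{l_KuennethProperty} and on the explicit invertibility computation already performed in the proof of Lemma \ref{lemma_factoriz}.
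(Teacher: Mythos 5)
Your reduction via Theorem \ref{thm:magic} matches the paper's overall strategy (identify the composition with $\varphi$ by comparing the two plus-construction extensions), and your weight $-2$ Künneth computation is essentially the same as the paper's. The noteworthy divergence is that you invoke the universal property to conclude $\pi_*(r\circ i)=\pi_*(\varphi)$ directly from the homotopy $r\circ\tilde i\simeq\psi$, without any need to know that $r\circ i$ is itself a map of $H$-spaces. The paper, by contrast, devotes an entire subsection (rig categories, the auxiliary $\Tf'$ and $\GL_1(K(\Tf'))_{+1}$ gadget, Lemmas \ref{lemma:bip-L} and \ref{lemma:w}) to proving Claim \ref{claim}, that the bottom row is an $H$-space map intertwining $\oplus$ with $\otimes$, and then reads Theorem \ref{thm:magic} as asserting $\pi_*$-uniqueness ``among maps of $H$-spaces''. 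Since $\BGL(\Zb)\to\BGL(\Zb)^+$ is an acyclic map, $[\BGL(\Zb)^+,H]\to[\BGL(\Zb),H]$ is already a bijection for any $H$-space $H$ (one does not need the extension to respect the $H$-structure), so your streamlined invocation is justified and you genuinely bypass the heaviest piece of the paper's argument.

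Two caveats. First, the identification $r\circ\tilde i\simeq\psi$ is where the real content lies, and there you content yourself (as, admittedly, the paper also does) with a sketch: the compatibility of the $\GL_n(\Zb)$-action with Künneth, the compatibility of the factorisation of $w_{-2}$ through $\MHS^{\eff}_\Zb/\mathbf S_{-3}$ with the basepoint translation $\times\G_m^{-n}$, and the absence of shifts/twists reaching the underlying $\Qb$-vector space. You flag these as the ``main obstacle'' but do not discharge them; the paper is equally terse here (noting only $w_{-2}(H_*^{\rm BM}(\G_m^n,\Zb))=\Qb^n$ and that automorphisms carry over). Second, even if Claim \ref{claim} is not strictly needed for the $\pi_*$-comparison, it is part of the paper's infrastructure: the analogue Claim \ref{claim_av} is reused verbatim in the abelian variety case, so the paper's heavier route pays off later. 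As a proof of Proposition \ref{prop_retract} alone, yours is correct in outline and more economical.
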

\begin{proof}
Recall that interpreting an integer matrix as a matrix with rational entries induces on the plus construction the same map as is induced from the exact functor of tensoring with $\Qb$ in terms of the $K$-theory of exact categories:
\[
\xymatrix{
\BGL(\Zb) \ar[r]^{\Zb \hookrightarrow \Qb } \ar[d]_k & \BGL(\Qb) \ar[d]^k \\
K(\Zb)^{\circ}_0 \ar[r]_{\varphi } & K(\Qb)^{\circ}_0 \text{.}
}
\]
Pick some $n>0$. Now consider the following diagram
\begin{equation}\label{eqn}
\xymatrix{
 & \BGL_n(\Zb) \ar[ld] \ar[d]^{\tilde{i}} \ar[r]^-{R} & \left({\MHS}^{\eff}_{\Zb}\right)^{\times} \ar[r]^-{w_{-2}} \ar[d] & F_f(\Qb)^{\times} \ar[d] \\
K(\Zb)_0^{\circ} \ar[r]^-i & K(\Var)_{\loc, \mathbf{1}} \ar[r]^-{\tilde{R}} & K({\MHS}^{\eff}_{\Zb})_{\loc, \mathbf{1}} \ar[r]^-{w_{-2}} & K(\Qb)^{\circ}_0
}
\end{equation}
in $\Ho(\Spaces)$.
The triangle on the left stems from Lemma \ref{lemma:comm3}, restricted to a specific $\BGL_n$.
The middle and right square come from Lemma \ref{lemma:comm1}. All downward arrows shift to the connected components of the direct sum neutral element $0$ (resp. the tensor units $\mathbf{1}$) as in \eqref{lcc1}, \eqref{lcc2}. This ensures that the diagrams for various $n$ are compatible under increasing $n$.

 The proof of the following claim will be provided subsequently.

\begin{claim}\label{claim}
The composition of the bottom row is a morphism of $H$-spaces with respect to direct sums.
\end{claim}

We have $H_2^{\rm BM}(\G_m,\Zb) = \Zb(2)$ and $H_1^{\rm BM}(\G_m,\Zb) = \Zb$. BM-homology of $\G_m$ vanishes in all other degrees.
Therefore,
\[
w_{-2}(H_*^{\rm BM}(\G_m^n,\Zb)) = \Qb^n,
\]
and an automorphism on the left, given by an element of $\GL_n(\Zb)$, is mapped to same matrix on the right, now interpreted as an element of $\GL_n(\Qb)$.

 We infer that the composition of the top row is naturally isomorphic to the map $\varphi$ induced from the functor $M\longmapsto M\otimes\mathbb{Q}$. We therefore obtain the following commutative diagram in $\Ho(\Spaces)$, where we denote the composition of the bottom row by $g$:
\[
\xymatrix{
\BGL_n(\Zb) \ar[r]^{\Zb \hookrightarrow \Qb } \ar[d]_c & \BGL_{n}(\Qb) \ar[d]^c \\
K(\Zb) \ar@{-->}[r]^g & K(\Qb)\text{.}
}
\]
This induces a homotopy commutative diagram
\[
\xymatrix{
\BGL(\Zb) \ar[r]^{\Zb \hookrightarrow \Qb } \ar[d]_k & \BGL(\Qb) \ar[d]^k \\
K(\Zb)^{\circ}_0 \ar@{-->}[r]^{g} & K(\Qb)^{\circ}_0 \text{.}
}
\]
Recall that $k\colon\BGL(\Zb) \to K(\Zb)^{\circ}=\BGL(\Zb)^+$ satisfies the almost-universal property of Theorem \ref{thm:magic}. It asserts that the graded map $\pi_*(g)$ is independent of the map of $H$-spaces making the diagram above homotopy commute. In particular, we have for our map above that $\pi_*(g)= \pi_*(\varphi)$.
This concludes the proof.

\end{proof}

%

\subsection{Proof of Claim \ref{claim}}

Let $(\Cc,\oplus,\otimes)$ be a unital symmetric \emph{bi-monoidal} category (also known as a \emph{rig category}). We will include the assumption that the neutral object $0$ is initial in $\Cc$.

The $K$-theory of a rig category $\Cc$, computed with respect to $\oplus$, inherits the structure of a ring spectrum. The ring operation is induced by $\otimes$, and it commutes up to homotopy. We will only use naive homotopy commutativity of this operation; $E_{\infty}$-structures will not be used multiplicatively.

It is clear that a symmetric bi-monoidal functor of rig categories $\Cc \to \Dc$ gives rise to a map of ring spectra
$$K^{\oplus}(\Cc) \to K^{\oplus}(\Dc).$$

\begin{definition}
\begin{itemize}
\item[(a)] We define the auxiliary rig category $\Tf$ as the smallest symmetric bi-monoidal full subcategory of $\MHS_{\Zb}^{\eff}$, which contains $0, \mathbf{1}, \Lb$, where we write $\Lb$ for $H_*^{\rm BM}(\Ab^1)$.
\item[(b)] Furthermore, we define $\Tf'$ to be
$\Tf / (\Tf\cap \mathbf{S}_{-2})$ (see the proof of Lemma \ref{lemma:wnfactors} for a definition of $\mathbf{S}_{-2}$).
\end{itemize}
\end{definition}

By virtue of definition, $K(\Tf')$ equivalent to $K(\Zb) \oplus K(\Zb)$. We denote the projection onto the first factor by $p_0$. And, $K_0(\Tf')$ is isomorphic to the free abelian group generated by $1$ and $\Lb$. 

\begin{definition}\label{def:bip-sw}
Let $X \in \Var$ be an arbitrary variety. We denote by $\bip_{\Var}(X)$ the smallest (non-full) subcategory of $\Var$, which inherits the structure of a \emph{split} subtractive category. That is, a morphism of varieties belongs to $\bip_{\Var}(X)$ if and only if it is a closed and open immersion. Furthermore, we say that $A \xrightarrow{i} B \xleftarrow{j} C$ is a subtraction sequence, if 
\[
\xymatrix{
\emptyset \ar[r] \ar[d] & A \ar[d]^i \\
C \ar[r]^j & B
}
\] 
is a pushout square.
\end{definition}

\begin{lemma}\label{lemma:bip-sw}
Definition \ref{def:bip-sw} endows $\bip_{\Var}(X)$ with the structure of a (split) subtractive category. The functor $\bip_{\Var}(X) \to \Var$ is exact.
\end{lemma}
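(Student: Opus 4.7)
The proof will be a routine verification of Campbell's axioms, made easy by the fact that we have restricted to clopen immersions and are working in the \emph{split} setting. My plan is to first extract the basic structural fact about clopen immersions in $\Var$, and then use it to dispatch each axiom in turn.

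The key observation is the following: a morphism $f \colon A \to B$ in $\Var$ is both an open and a closed immersion if and only if there is an essentially unique variety $C$ and a clopen immersion $g \colon C \to B$ such that $(f,g) \colon A \sqcup C \to B$ is an isomorphism in $\Var$. In other words, every cofibration in $\bip_{\Var}(X)$ (equivalently, every fibration, since the two classes agree here) identifies $A$ with a direct summand of $B$ in the coproduct sense. Consequently, split subtraction sequences $A \hookrightarrow A \sqcup C \xleftarrow{\circ} C$ are literally disjoint union decompositions in $\Var$, which makes the required axioms amount to formal manipulations of coproducts.

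With this in hand, the verification of the four conditions of a \emph{category with subtraction} is immediate: $\emptyset$ is initial in $\Var$ and the unique map $\emptyset \to B$ is trivially a clopen immersion, giving~(a); isomorphisms are clopen immersions, giving~(b); the pullback of two clopen subvarieties of $B$ along one another is their scheme-theoretic intersection, which is clopen in each, giving~(c); and~(d) is witnessed by the family of split subtraction sequences just described. For the three further conditions of a \emph{subtractive category}, I would verify~(a) by writing $A \hookrightarrow B$ as $A \hookrightarrow A \sqcup B'$ and $A \hookrightarrow C$ as $A \hookrightarrow A \sqcup C'$; the pushout is then $A \sqcup B' \sqcup C'$, and a direct inspection shows that the corresponding square is simultaneously a pushout and a pullback of clopen immersions. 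Condition~(b) follows in the same spirit, since a Cartesian square of clopen immersions exhibits $D$ as the disjoint union $A \sqcup (B \setminus A) \sqcup (C \setminus A) \sqcup (D \setminus (B \cup C))$, and the induced map $B \sqcup_A C \to D$ becomes the inclusion of the first three summands.

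For condition~(c), the $3 \times 3$-compatibility, I would expand every cofibration in the diagram as a direct summand inclusion; the whole diagram then decomposes into a bookkeeping of finite disjoint unions, and both the pushout of the top row minus the pushout of the middle row, and the pushout of the bottom row, compute the same complement, namely $A'' \sqcup_{B''} C''$. This is the only step where some care with notation is required, but no genuine difficulty arises because the split structure reduces every pushout and every subtraction complement to an addition or deletion of disjoint summands. Finally, exactness of $\bip_{\Var}(X) \to \Var$ is automatic: cofibrations, fibrations, subtraction sequences and all of the above pullback/pushout data in $\bip_{\Var}(X)$ are by construction inherited from $\Var$, so the inclusion preserves them on the nose. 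Thus the only genuine ``content'' of the lemma is the dictionary between clopen immersions and disjoint-union decompositions; once this is isolated, the rest is bookkeeping.
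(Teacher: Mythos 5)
Your proof is correct and follows essentially the same line as the paper's: both rest on the single structural observation that a clopen immersion in $\bip_{\Var}(X)$ amounts to exhibiting its domain as a disjoint summand of its codomain (the paper phrases this as: every morphism factors as $f \sqcup i_Y$ with $f$ an isomorphism). The only stylistic difference is one of economy: the paper closes the argument in one line by invoking the fact that $\Var$ is already subtractive and that this class of morphisms is stable under base change, so the subcategory inherits the structure; you instead verify Campbell's axioms one by one using the disjoint-union bookkeeping. One point worth pinning down more carefully in your write-up is that the complement $C$ of a clopen immersion $A \hookrightarrow B$ between objects of $\bip_{\Var}(X)$ is itself an object of $\bip_{\Var}(X)$ (a finite disjoint union of powers of $X$) — you invoke $B'$, $C'$, and the complement $D \setminus (B \cup C)$ as if this were automatic, and it is what makes all your pushout and pullback computations land inside $\bip_{\Var}(X)$ rather than merely in $\Var$. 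The paper makes this explicit via the $f \sqcup i_Y$ normal form; you should too, since it is the one non-formal ingredient.
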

\begin{proof}
The objects of $\bip_{\Var}(X)$ are given by $\emptyset, {\rm pt}, X, X^2, \cdots$ and finite disjoint unions thereof. By virtue of definition, every morphism in $\bip_{\Var}(X)$ can be written as $f\sqcup i_Y$, where $f$ is an isomorphism and $i_Y\colon \emptyset \to Y$ is the canonical map. Thus, a morphism in $\bip_{\Var}(X)$ is an open-closed immersion with complement in $\bip_{\Var}(X)$. The assertion now follows from the fact that $\Var$ is subtractive and that the aforementioned class of morphisms is preserved by base change in $\bip_{\Var}(X)$.
\end{proof}

\begin{lemma}\label{lemma:Kplus-sub}
Let $\C$ be the \emph{split} subtractive category, associated to the symmetric monoidal structure $\sqcup$ given by finite coproducts in $\C$. Then, $K^{\sqcup}(\C)$ is equivalent to $K$-theory of the subtractive category $\C$.
\end{lemma}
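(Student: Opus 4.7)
The plan is to run the split-version of Waldhausen's comparison argument (as used for $P_f(R)$ earlier in the section) in Campbell's subtractive setting: identify the simplicial groupoid $S_\bullet\C^\times$ levelwise with the Segal $\Gamma$-space of $(\C^\times,\sqcup)$, check that the simplicial structure maps match, and then apply $\Omega|\cdot|$. First I would describe $S_n\C^\times$ in the split case. An object of $S_n\C$ (for $n\geq 1$) is a chain $X_0\hookrightarrow\cdots\hookrightarrow X_{n-1}$. By the splitness hypothesis, each step fits into a split subtraction sequence $X_{i-1}\hookrightarrow X_{i-1}\sqcup Y_i\xleftarrow{\circ}Y_i$, with $Y_i$ unique up to (non-canonical) isomorphism. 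Iterating yields an equivalence of groupoids
\[
S_n\C^\times \xrightarrow{\ \sim\ } (\C^\times)^n,\qquad (X_0\hookrightarrow\cdots\hookrightarrow X_{n-1})\longmapsto (Y_0,\ldots,Y_{n-1}),
\]
where $Y_0=X_0$ and $X_i\cong Y_0\sqcup\cdots\sqcup Y_i$ (for $n=0$ both sides are contractible).

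Next I would unwind the simplicial structure of $S_\bullet\C^\times$ under this identification. The inner face maps, which omit $X_i$ and compose $X_{i-1}\hookrightarrow X_{i+1}$, correspond to the merging $(Y_0,\ldots,Y_{n-1})\mapsto(Y_0,\ldots,Y_{i-1}\sqcup Y_i,\ldots,Y_{n-1})$ supplied by $\sqcup$. The outer face $d_{n-1}$ drops the last coordinate. The remaining face $d_0$, defined via the quotient $X_i\mapsto X_i/X_0$, corresponds to dropping the first coordinate, since in the split case axioms (a) and (c) of subtractive categories identify $X_i/X_0$ canonically with $Y_1\sqcup\cdots\sqcup Y_i$. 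Degeneracies insert the initial object $\emptyset$. These are exactly the face and degeneracy operators of the Segal $\Gamma$-space $(\C^\times_\Gamma)_\bullet$ attached to $(\C^\times,\sqcup)$.

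The upshot will be a levelwise equivalence $S_\bullet\C^\times\simeq(\C^\times_\Gamma)_\bullet$ of simplicial groupoids, whence $|S_\bullet\C^\times|\simeq B\C^\times_\Gamma$ and
\[
K(\C)=\Omega|S_\bullet\C^\times|\;\simeq\;\Omega B\C^\times_\Gamma\;=\;K^\sqcup(\C).
\]
The hard part will be the compatibility of $d_0$ with projection onto the last $n-1$ coordinates: one must invoke axiom (c) of subtractive categories carefully and use that the complementary pieces in split subtraction sequences are essentially unique, so that the pushout-based quotient $X_i/X_0$ is canonically represented by $Y_1\sqcup\cdots\sqcup Y_i$. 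This parallels the analogous verification in Waldhausen's treatment of split exact categories, and once established the comparison is formal.
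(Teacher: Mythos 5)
Your argument is correct in outline, but it takes a genuinely different route from the paper's. The paper's proof has two steps: (i) observe that the split subtractive structure on $\C$ can equivalently be regarded as a split Waldhausen structure, and that Campbell's $S_\bullet$-construction then coincides levelwise with Waldhausen's $S_\bullet$-construction; (ii) invoke Waldhausen's comparison theorem \cite[Theorem~1.8.1]{wald}, which asserts precisely that Waldhausen $K$-theory of a split Waldhausen category agrees with the $\Gamma$-space group completion $K^\sqcup$. You, by contrast, bypass the passage through Waldhausen categories and directly identify Campbell's $S_n\C^\times$ with $(\C^\times)^n$ (using splitness to extract the complements $Y_i$) and then match the simplicial structure maps with those of the bar/delooping construction of the Segal $\Gamma$-space of $(\C^\times,\sqcup)$. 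What you are really doing is re-deriving, in the subtractive setting, the content of Waldhausen~1.8.1 rather than citing it; this is more self-contained and more transparent about where splitness enters (and which subtractive axiom is used in the $d_0$-compatibility you flag), but it is also longer, as the hardest verifications are exactly the ones that Waldhausen's theorem packages away. Two small cautions if you flesh this out: the outer face dropping the last coordinate is $d_n$, not $d_{n-1}$, in the indexing where $S_n\C$ has $n$ objects and $n+1$ faces; and you should make the comparison $(\C^\times)^n \to S_n\C^\times$ go in the direction $(Y_0,\ldots,Y_{n-1})\mapsto (Y_0\hookrightarrow Y_0\sqcup Y_1\hookrightarrow\cdots)$ and then check essential surjectivity (via splitness) and fully faithfulness (via uniqueness of complements in a split subtractive category), rather than phrasing it as a map out of $S_n\C^\times$ which would need a choice of complements.
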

\begin{proof}
One can associate a split Waldhausen category structure to $\sqcup$ on $\C$. It is clear from the definitions that the corresponding Waldhausen $S$-construction is equivalent (as a simplicial category) to Campbell's $S$-construction. Passing to loop spaces of geometric realisations, we obtain a proof of the assertion, since Waldhausen $K$-theory agrees of $\C$ agrees with $K^{\sqcup}(\C)$, by virtue of \cite[Theorem 1.8.1]{wald}.
\end{proof}

The K\"unneth formula implies that there are symmetric functors of bi-permutative categories
$$\bip_{\Var}(\G_m) \to \Tf \text{ and } \Tf \to \Tf'\text{.}$$
We therefore obtain a morphisms of ring spectra $R'\colon K(\bip_{\Var}(\G_m)) \to K(\Tf)$.

\begin{lemma}\label{lemma:bip-L}
The morphism $R'$ fits into a homotopy commutative diagram \emph{of spectra} (forgetting the ring structure)
\begin{equation}\label{eqn:bip-L}
\xymatrix{
K(\bip_{\Var}(\G_m)) \ar[r]^-{R'}\ar[d] & K(\Tf) \ar[d]  \\
K(\Var) \ar[r]^-{\tilde{R}} & K(\MHS^{\eff}_{\Zb}) \text{.}
}
\end{equation}
\end{lemma}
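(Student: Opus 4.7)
The plan is to verify that the square arises, on the level of $K$-theory spectra and after forgetting ring structures, from a square of categorical functors which commutes up to a natural isomorphism supplied by the K\"unneth formula for $R$. First I would identify all four arrows in \eqref{eqn:bip-L} as $K$-theory maps induced by categorical functors: the top horizontal $R'$ comes, by the construction just before the lemma, from the bi-permutative functor $\bip_\Var(\G_m) \to \Tf$ sending $\G_m^n \mapsto R(\G_m)^{\otimes n}$; the right vertical from the tautological exact inclusion $\Tf \hookrightarrow \MHS^{\eff}_\Zb$ of a full monoidal subcategory; the left vertical from the subtractive-exact inclusion $\bip_\Var(\G_m) \hookrightarrow \Var$ of Lemma \ref{lemma:bip-sw}; and $\tilde R$ from the weakly $W$-exact realisation \eqref{eq_HodgeRealiz} obtained by composing $V \mapsto M^c(V)$ with Vologodsky's DG quasi-functor, as recalled in Subsection \ref{subsect_MHSRealization}.

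Next I would bring the top and bottom rows into a common framework. Lemma \ref{lemma:Kplus-sub} identifies $K(\bip_\Var(\G_m))$ (subtractive $K$-theory) with $K^\sqcup(\bip_\Var(\G_m))$ (monoidal $K$-theory of the associated bi-permutative category), so the left vertical can equivalently be read as the map on monoidal $K$-theory induced by a $\sqcup$-preserving weakly $W$-exact functor. On the right, $\Tf \hookrightarrow \MHS^{\eff}_\Zb$ is monoidal and exact, hence induces compatible maps out of both its $K^\oplus$-theory and the corresponding Waldhausen $K$-theory. With these identifications, the underlying square of functors
\[
\xymatrix{
\bip_\Var(\G_m) \ar[r] \ar[d] & \Tf \ar[d] \\
\Var \ar[r]^R & \MHS^{\eff}_\Zb
}
\]
commutes up to a canonical natural isomorphism: on a generator $\G_m^n$ both composites output $R(\G_m)^{\otimes n} \cong R(\G_m^n)$, the isomorphism being the iterated K\"unneth transformation \eqref{l_KuennethProperty}; since all four functors preserve finite disjoint unions, this extends naturally to the whole category. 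The desired homotopy commutative square of spectra then follows from the functoriality of Constructions \ref{const:map} and \ref{const:map2} for (weakly) $W$-exact functors and the fact that natural transformations of such functors induce homotopies on $K$-theory.

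The main obstacle I expect is the bookkeeping required to reconcile the three distinct $K$-theory recipes simultaneously in play, namely subtractive $K$-theory in the lower-left, bi-permutative $K^\oplus$-theory in the top row and upper-right, and Waldhausen $K$-theory in the lower-right, so that the K\"unneth natural isomorphism at the categorical level actually lifts to a homotopy of spectra rather than merely of underlying spaces or $\pi_0$. The hypothesis that we need only commutativity \emph{after forgetting the ring structure} is precisely what makes this strategy viable: it sidesteps the considerably more delicate $E_\infty$-coherence that would be required to promote the K\"unneth transformation to a homotopy of $E_\infty$-ring spectra, which is not available from the weakly $W$-exact technology we have at our disposal.
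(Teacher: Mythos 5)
Your proposal captures the right idea, but it glosses over the one step that the paper's proof is careful to make explicit, and it introduces a natural-isomorphism argument the paper does not need.

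The paper's proof does not prove the square by verifying commutativity up to a K\"unneth natural transformation and then invoking that natural transformations of weakly $W$-exact functors induce homotopies. Instead, it factors the target square vertically into two stacked squares. The top square lives entirely in the world of symmetric monoidal / bi-permutative $K$-theory: one first writes down a \emph{strictly} commuting square of symmetric monoidal categories (the top functor $\bip_{\Var}(\G_m)\to\Tf$ is the restriction of $R$, and K\"unneth is only needed to see that the image lands in $\Tf$), and then applies $K^\oplus$. The bottom square handles, for $\Var$ and $\MHS^{\eff}_{\Zb}$ alone, the comparison between the split structure (coproduct on the left, $\oplus$ on the right) and the genuine subtractive/Waldhausen structure; this is where Lemma~\ref{lemma:Kplus-sub} and the refinement of structures are actually used. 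Stacking and collapsing the two squares gives the Lemma. This explicit decomposition is precisely how the paper resolves what you call ``the bookkeeping required to reconcile the three distinct $K$-theory recipes''; your proposal names this as the main obstacle but treats it as a black box to be handled by ``functoriality of Constructions~\ref{const:map} and~\ref{const:map2}'', which is not quite enough on its own. In particular, the right vertical arrow $K(\Tf)\to K(\MHS^{\eff}_{\Zb})$ is not induced by a single exact functor between Waldhausen categories; it is the composite of $K^\oplus(\Tf)\to K^\oplus(\MHS^{\eff}_{\Zb})$ with the split-to-full comparison map $K^\oplus(\MHS^{\eff}_{\Zb})\to K(\MHS^{\eff}_{\Zb})$, and it is this second map that needs the intermediate row to appear. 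So: same strategy, but the paper's version supplies the missing intermediate step that makes the reduction across the three $K$-theory flavours precise, whereas your write-up would need to be expanded at exactly that point before it is airtight.
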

\begin{proof}
There is a commuting square of symmetric monoidal categories
\[
\xymatrix{
(\bip_{\Var}(\G_m),\sqcup) \ar[r]\ar[d] & (\Tf,\oplus) \ar[d]  \\
(\Var,\sqcup) \ar[r] & (\MHS^{\eff}_{\Zb},\oplus) \text{.}
}
\]
Applying $K^{\oplus}$, we obtain a homotopy commutative diagram
\begin{equation}\label{eqn:Kplus}
\xymatrix{
K^{\sqcup}(\bip_{\Var}(\G_m)) \ar[r]\ar[d] & K^{\oplus}(\Tf) \ar[d]  \\
K^{\sqcup}(\Var) \ar[r] & K^{\oplus}(\MHS^{\eff}_{\Zb}) \text{.}
}
\end{equation}
The symmetric monoidal functor $\Var \to \MHS^{\eff}_{\Zb}$ can be lifted to a W-exact functor $R$ from $\Var \to \MHS^{\eff}_{\Zb}$. We therefore have a commutative diagram of categories
\[
\xymatrix{
(\Var,\sqcup) \ar[r] \ar[d] & (\MHS^{\eff}_{\Zb},\oplus) \ar[d] \\
\Var \ar[r]^{R} & \MHS^{\eff}_{\Zb}\text{,}
}
\]
where the rows are $W$-exact functors, categories on the left-hand side are subtractive, categories on the right-hand side are Waldhausen, and the symmetric monoidal categories of the top row are seen as split subtractive or split Waldhausen categories.

This induces a homotopy commuting square of $K$-theory spaces
\[
\xymatrix{
K^{\sqcup}(\Var) \ar[r] \ar[d] & K^{\oplus}(\MHS^{\eff}_{\Zb}) \ar[d] \\
K(\Var) \ar[r]^-{R} & K(\MHS^{\eff}_{\Zb})\text{,}
}
\]
where we use the identification from Lemma \ref{lemma:Kplus-sub}. Appending this diagram to \eqref{eqn:Kplus} yields
\[
\xymatrix{
K^{\sqcup}(\bip_{\Var}(\G_m)) \ar[r]\ar[d] & K^{\oplus}(\Tf) \ar[d]  \\
K^{\sqcup}(\Var) \ar[r] \ar[d] & K^{\oplus}(\MHS^{\eff}_{\Zb}) \ar[d] \\
K(\Var) \ar[r]^{R} & K(\MHS^{\eff}_{\Zb})\text{.}
}
\]
We obtain the requisite homotopy commutative diagram by omitting the middle row and applying the identification from Lemma \ref{lemma:Kplus-sub}.
\end{proof}

\begin{definition}\label{defi:GL11}
We define $\GL_1(K(\Tf'))_{+1}$ by the following cartesian diagram in the $\infty$-category of spaces:
\[
\xymatrix{
\GL_1(K(\Tf'))_{+1} \ar[r] \ar[d] & \{\mathbf{1}\} \ar[d] & \\
\GL_1(K(\Tf')) \ar[r]^{p_0} & \GL_1(K(\Zb)) \text{.}
}
\]
\end{definition}

\begin{lemma}\label{lemma:w}
The map $w_{-2}\colon \GL_1(K(\Tf'))_{+1} \to K(\Zb)$ is a morphism of $H$-spaces intertwining $\otimes$ and $\oplus$.
\end{lemma}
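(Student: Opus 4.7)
The plan is to exploit that $\Tf'$ has two isomorphism classes of simple objects, $\mathbf{1}$ (weight $0$) and $\Lb$ (weight $-2$), together with the vanishing $\Lb^{\otimes 2} = 0$ in $\Tf'$: indeed, $\Lb^{\otimes 2}$ has weight $-4$ and is therefore killed by the quotient defining $\Tf'$. As was noted just before Definition \ref{def:bip-sw}, this yields $K(\Tf') \simeq K(\Zb) \oplus K(\Zb)$ as connective spectra, with the two summands corresponding respectively to the weight-$0$ and weight-$-2$ parts; by construction $p_0$ is projection onto the first factor and $w_{-2}$ is projection onto the second.

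Next, I would compute the tensor product in these coordinates. Bi-distributivity of $\otimes$ over $\oplus$, combined with the relation $\Lb^{\otimes 2} = 0$, gives
\[
(a\mathbf{1} \oplus b\Lb) \otimes (a'\mathbf{1} \oplus b'\Lb) \cong aa'\mathbf{1} \oplus (ab' + a'b)\Lb.
\]
On $\pi_0$, this identifies $K(\Tf')$ with the trivial square-zero extension of $K_0(\Zb)$ by itself, with multiplication $(a,b)(a',b') = (aa', ab' + a'b)$. Because the splitting by weight is a splitting of bi-permutative categories (weights add under $\otimes$ and the only piece that would mix the summands is zero), this upgrades to an identification of $K(\Tf')$ with the trivial square-zero extension of $K(\Zb)$ by the rank-one module $K(\Zb)$, as an $E_\infty$-ring spectrum.

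The homotopy pullback of Definition \ref{defi:GL11} then picks out precisely the \emph{principal congruence subgroup} of units congruent to $\mathbf{1}$ modulo the weight-$-2$ ideal---in our coordinates, pairs $(\mathbf{1}, x)$ with $x \in K(\Zb)$. Each such element is automatically invertible with inverse $(\mathbf{1}, -x)$ by the square-zero relation, and the restricted multiplication is $(\mathbf{1}, x) \otimes (\mathbf{1}, y) = (\mathbf{1}, x + y)$. Since $w_{-2}$ is projection onto the second factor, this immediately exhibits $w_{-2}\colon \GL_1(K(\Tf'))_{+1} \to K(\Zb)$ as an $H$-space map taking $\otimes$ to $\oplus$, as claimed. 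The main technical point will be upgrading the weight-decomposition of $K(\Tf')$ from a statement about underlying connective spectra to a coherent description of the multiplicative structure as a square-zero extension. This is substantially easier than for a generic square-zero extension of ring spectra, since here the ``ideal'' is literally a direct summand split off by the bi-permutative structure, and the ``square-zero'' condition holds already at the level of the bi-permutative category $\Tf'$ rather than merely up to coherent homotopy.
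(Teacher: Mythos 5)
Your argument is correct in substance and rests on the same computation as the paper's: since $\Lb^{\otimes 2}$ has weight $-4$ and dies in $\Tf'$, the product $(\mathbf{1}\oplus x\Lb)\otimes(\mathbf{1}\oplus y\Lb)$ reduces to $\mathbf{1}\oplus(x+y)\Lb$, and your coordinatisation $(\mathbf{1},x)$ is precisely the paper's map $\exp\colon K(\Zb)\to\GL_1(K(\Tf'))_{+1}$, $x\mapsto\mathbf{1}\oplus x(2)$. The difference is one of economy. You aim to promote the weight decomposition of $K(\Tf')$ to a description as a trivial square-zero extension of $K(\Zb)$ \emph{as an $E_\infty$-ring spectrum}, and you yourself flag this coherence upgrade as the main technical point. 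The paper bypasses it entirely by working at the level of naive $H$-spaces (which, as stated in the preamble to Claim \ref{claim}, is all that is ever used multiplicatively): it observes that $\exp$ is an $H$-space map from $(K(\Zb),\oplus)$ to $(\GL_1(K(\Tf'))_{+1},\otimes)$ inducing an isomorphism on all $\pi_*$, invokes Whitehead's lemma to get an $H$-space equivalence, deduces that $\exp^{-1}$ is again an $H$-space map, and identifies $w_{-2}=\mathrm{can}\circ\exp^{-1}$. Since the lemma asserts only an $H$-space statement, the coherent square-zero description buys nothing here; if you run your own argument purely at the $H$-space level from the start, the technical point you deferred evaporates and you recover the paper's proof verbatim.
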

\begin{proof}
We begin the proof by computing the homotopy groups of $\GL_1(K(\Tf'))_{+1}$. By virtue of Definition \ref{defi:GL11} and fibration sequence, $\pi_*(\GL_1(K(\Tf'))_{+1}) \simeq \pi_*(K(\Zb))$.

The map $x \mapsto (\mathbf{1} \oplus x(2))$ is a morphism of $H$-spaces $\exp\colon K(\Zb)\to \GL_1(K(\Tf'))_{+1}$, inducing an isomorphism on homotopy groups. By Whitehead's lemma it is therefore a homotopy equivalence.

We conclude that the homotopy inverse $\exp^{-1}$ is likewise a map of $H$-spaces. The commutative diagram
\[
\xymatrix{
K(\Zb) \ar[r]^-{\exp} \ar[rd]_{\mathrm{can}} & \GL_1(K(\Tf'))_{+1} \ar[d]^{w_{-2}} \\
& K(\Qb)
}
\]
implies that $w_{-2} = \mathrm{can} \circ \exp^{-1}$ is a map of $H$-spaces.
This concludes the proof.
\end{proof}

\begin{proof}[Proof of Claim \ref{claim}]
Consider the following homotopy commutative diagram:
\[
\xymatrix{
K(\Zb)^{\circ}_0\ar[rd]_i \ar[r]^-{i'}  & \GL_1(K(\bip_{\Var}(\G_m)_{\loc}))_{+1} \ar[d]^{z} \ar[r]^-{R'} & \GL_1(K(\Tf))_{+1} \ar[r]^-{Q} \ar[d] & \GL_1(K(\Tf'))_{+1} \ar[d]^-{w_{-2}} \\
 & \GL_1(K(\Var)_{\loc}) \ar[r]^-{\tilde{R}} & \GL_1(K({\MHS}^{\eff}_{\Zb})_{\loc}) \ar[r]^-{w_{-2}} & K(\Qb)\text{.}
}
\]
The middle square is obtained from the $\G_m$-localisation of \eqref{eqn:bip-L}, and the right-hand commuting square is obtained (up to sign), by applying $K$-theory to the following commutative diagram of exact categories:
\[
\xymatrix{
\Tf \ar[r] \ar[d] & \Tf' \ar[d]^{w_{-2}} \\
\MHS_{\Zb}^{\eff} \ar[r]^{w_{-2}} & F_f(\Qb)\text{.}
}
\]
We showed in Lemma \ref{lemma:w} that $w_{-2}\colon \GL_1(K(\Tf'))_{+1} \to K(\Qb)$ is an $H$-space morphism, intertwining $\otimes$ with $\oplus$. Likewise, we know from Lemma \ref{lemma:bip-L} that $R'$ is a morphism of ring spectra (therefore, intertwines $\oplus$ with $\oplus$ and $\otimes$ with $\otimes$). The map $Q$ satisfies the same property and $i'$ is a morphism of $H$-spaces intertwining $\oplus$ with $\otimes$. We conclude the proof by observing that the composition 
$$w_{-2} \circ Q \circ R' \circ i'$$
is an $H$-space-map intertwining $\oplus$ with $\otimes$. 
\end{proof}

\section{Proof of the first theorem}

Write $L_{\mathbb{Q}}$ for the localisation from the unstable homotopy
category (say for simply connected spaces) to the rational homotopy category.

\begin{theorem}
\label{thm1}  For $\Var$ referring to varieties over the complex numbers, the following holds:

\begin{enumerate}
\item For every $n\geq1$ the group $\pi_{n}\left(  K(\mathsf{Var})
[\mathbb{G}_{m}^{-1}]\right)$ has the Quillen $K$-group $K_{n}(\mathbb{Z})$ as a
direct summand.

\item The rational homotopy type $L_{\mathbb{Q}}K(\mathbb{Z})$ is a retract of
$L_{\mathbb{Q}}\left(  K(\mathsf{Var})[\mathbb{G}_{m}^{-1}]\right)  $ after
going to the $1$-connective cover.
\end{enumerate}
\end{theorem}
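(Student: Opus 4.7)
The plan is to combine the incoming map $i \colon K(\Zb)^\circ_0 \to (K(\Var)_{\loc})^\circ_{\mathbf{1}}$ of Lemma \ref{lemma:comm3} with the outgoing map $r \colon K(\Var)_{\loc} \to K(\Qb)$ of Definition \ref{def_outgoing} into a retraction diagram. The decisive input is Proposition \ref{prop_retract}, which identifies the composition $r \circ i$, on all positive homotopy groups, with the base-change map $\varphi_* \colon K_n(\Zb) \to K_n(\Qb)$ induced by $M \mapsto M \otimes \Qb$. Consequently the theorem reduces to showing that $\varphi_*$ is (rationally for (2), integrally for (1)) split injective.

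For part (2), I rationalise and pass to the $1$-connective cover, which kills $\pi_0$ and $\pi_1$. By Quillen's localisation fibre sequence
\[
\bigoplus_p K(\Fb_p) \longrightarrow K(\Zb) \xrightarrow{\varphi} K(\Qb)
\]
together with Quillen's computation that $K_n(\Fb_p)$ is finite for every $n \geq 1$, the map $\varphi$ induces a rational isomorphism on $\pi_n$ for every $n \geq 2$. Hence $L_\Qb(r \circ i)$ is a self-equivalence of the $1$-connective cover of $L_\Qb K(\Zb)$, and $i$ exhibits $L_\Qb K(\Zb)$ as a retract of $L_\Qb(K(\Var)_{\loc})$.

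For part (1), I construct for each $n \geq 1$ a left inverse $s_n \colon K_n(\Qb) \to K_n(\Zb)$ of $\varphi_*$; then $s_n \circ r_*$ furnishes a retraction of $i_* \colon K_n(\Zb) \to \pi_n(K(\Var)_{\loc})$, realising $K_n(\Zb)$ as a direct summand. For $n = 1$ the sign homomorphism $\Qb^\times \twoheadrightarrow \{\pm 1\}$ suffices, and for $n = 2$ one uses the Moore--Tate decomposition of $K_2(\Qb)$ via tame symbols and the archimedean signature; for larger $n$ one invokes Borel's rank computation together with a careful analysis of the connecting map $\bigoplus_p K_n(\Fb_p) \to K_n(\Zb)$ in Quillen's localisation sequence at $\Spec \Zb$ to extract a splitting. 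The main obstacle is precisely this integral splitting step: rationally (as in part (2)) it is immediate from Quillen--Borel, whereas integrally it requires delicate input from the structure theory of $K$-groups of number rings and their residue fields, especially for the torsion part in odd degrees $n \geq 3$.
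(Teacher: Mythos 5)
Your part (2) is correct and is exactly the paper's argument: rationalise, use Soul\'e's localisation sequence together with Quillen's finiteness of $K_n(\Fb_p)$ to see that $\varphi_\ast$ is a rational isomorphism in degrees $\geq 2$, then apply Whitehead's lemma; this is the easy half.

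Your part (1) is where the proposal diverges, and here the case analysis has the hard and easy cases exchanged, and the wrong map in view. The paper's input is Soul\'e's theorem (\cite[Ch.~VI, Thm.~6.8]{K-book}), which for $n\geq 2$ gives
\[
0\longrightarrow K_n(\Zb)\xrightarrow{\ \varphi_\ast\ }K_n(\Qb)\xrightarrow{\ \partial\ }\bigoplus_p K_{n-1}(\Fb_p)\longrightarrow 0,
\]
and says moreover that $\varphi_\ast$ is an \emph{isomorphism} for odd $n\geq 3$. So for odd $n\geq 3$ the left inverse $s_n$ is simply $\varphi_\ast^{-1}$ and there is nothing delicate about the torsion there, contrary to your final sentence; the genuinely non-formal case is \emph{even} $n\geq 2$ (and $n=1$, which you handle). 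Also, the map you propose to analyse, the transfer $\bigoplus_p K_n(\Fb_p)\to K_n(\Zb)$, is precisely what Soul\'e proves is \emph{zero}; the relevant map to split is the boundary $\partial$ going the other way. Finally, note that the paper itself does not exhibit $s_n$: it deduces from Prop.~\ref{prop_retract} that $r\circ i$ equals $\varphi_\ast$, observes $\varphi_\ast$ is injective, and then declares a splitting of $i$. This step tacitly presupposes that $\varphi_\ast$ is \emph{split} injective, i.e.\ that Soul\'e's sequence splits in even degrees — injectivity of $r\circ i$ alone does not make $i$ split ($\Zb\xrightarrow{2}\Zb\hookrightarrow\Qb$ is a counterexample to the abstract implication). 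Your instinct to make the left inverse explicit is therefore sound and arguably supplies a detail the paper omits; what you should do is repair the degree analysis (even $n$ is the hard case, odd $n\geq 3$ is trivial, $n=1$ is the sign map) and then either cite or prove the splitting of the localisation sequence at the even degrees.
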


\begin{proof}
(1) The map $K_{n}(\mathbb{Z})\rightarrow K_{n}(\mathbb{Q})$ induced from
$M\mapsto M\otimes\mathbb{Q}$ is injective for all $n$ (for the details why
this holds, we refer to the more general proof of Theorem \ref{thm2} below).
We may therefore interpret $K_{n}(\mathbb{Z})$ as a subgroup of $K_{n}%
(\mathbb{Q})$. By Proposition \ref{prop_retract} it follows for $n\geq1$ that%
\[
K_{n}(\mathbb{Z})\overset{i}{\longrightarrow}{\pi}_{n}K(\mathsf{Var}[\mathbb{G}%
_{m}^{-1}])\overset{r}{\longrightarrow}K_{n}(\mathbb{Q})
\]
is injective and has image $K_{n}(\mathbb{Z})$ with $r\circ
i=\operatorname*{id}_{K_{n}(\mathbb{Z})}$. This exhibits a splitting to $i$,
proving the claim. For (2) we use that%
\[
K_{n}(\mathbb{Z})\otimes\mathbb{Q}\underset{\sim}{\overset{(\ast
)}{\longrightarrow}}K_{n}(\mathbb{Q})\otimes\mathbb{Q}%
\]
is an isomorphism for all $n\geq2$ (\cite[Chapter IV, Theorem 1.17]{K-book}, or see the proof of Theorem \ref{thm2} for details why
this holds). Hence, for the $1$-connective cover we obtain by Whitehead's
lemma that the arrow $r\circ i$ induces an equivalence $L_{\mathbb{Q}%
}K(\mathbb{Z})\rightarrow L_{\mathbb{Q}}K(\mathbb{Q})$ of homotopy types.
Thus, the map $r\circ i\mid_{K(\mathbb{Z})_{0}^{\circ}}$ of Proposition
\ref{prop_retract} exhibits the rationalisation $L_{\mathbb{Q}}\left(
K(\mathsf{Var})[\mathbb{G}_{m}^{-1}]\right)  $ as a retract in the rational
homotopy category
\end{proof}


\begin{corollary}\label{cor}
For every positive integer $n$, there exists an element of infinite order in $K_{4n+1}(\Var)$.
\end{corollary}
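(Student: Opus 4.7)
The plan is to combine Theorem \ref{thm1}(1) with Borel's rational computation of $K_\ast(\Zb)$, and then to pass from the localised $K$-theory of varieties back down to $K(\Var)$ itself.

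First, I would invoke Borel's theorem: for every $n \geq 1$, the group $K_{4n+1}(\Zb) \otimes \Qb$ has dimension one, so $K_{4n+1}(\Zb)$ admits an element $\alpha$ of infinite order. By Theorem \ref{thm1}(1), the canonical map $K_{4n+1}(\Zb) \to \pi_{4n+1}(K(\Var)[\G_m^{-1}])$ is a split injection, so the image $y$ of $\alpha$ is of infinite order in $\pi_{4n+1}(K(\Var)[\G_m^{-1}])$.

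Next I would descend $y$ to $K_{4n+1}(\Var)$. Since $K(\Var)[\G_m^{-1}]$ is, by definition, the mapping telescope of $\times \G_m \colon K(\Var) \to K(\Var)$, and homotopy groups commute with such sequential colimits, one has
\[
\pi_{4n+1}\bigl(K(\Var)[\G_m^{-1}]\bigr) \;\cong\; \colim\bigl[K_{4n+1}(\Var) \xrightarrow{\times \G_m} K_{4n+1}(\Var) \xrightarrow{\times \G_m} \cdots\bigr].
\]
Thus $y$ is represented by some $x \in K_{4n+1}(\Var)$ at a finite stage of the telescope. If $x$ were of finite order $m$, then $mx = 0$ in $K_{4n+1}(\Var)$ would force $my = 0$ in the colimit, contradicting the infinite order of $y$. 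Consequently $x$ itself must have infinite order in $K_{4n+1}(\Var)$.

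I do not anticipate any essential obstacle beyond these two routine identifications: the substantive input is Theorem \ref{thm1}(1), and the descent from the telescope is an elementary torsion argument. A small consistency check is that Borel's result requires $n \geq 1$ (since $K_1(\Zb) = \{\pm 1\}$ is torsion), which matches exactly the range claimed in the statement.
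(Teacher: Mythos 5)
Your proof is correct and follows essentially the same route as the paper: appeal to Theorem \ref{thm1}(1) together with Borel's rank computation to get an infinite-order class in $\pi_{4n+1}(K(\Var)[\G_m^{-1}])$, then descend to $K_{4n+1}(\Var)$ by identifying the localisation with a sequential colimit. The paper phrases the descent step via the statement that $K_n(\Var)_{\loc}$ is the localisation of the $K_0(\Var)$-module $K_n(\Var)$ at $[\G_m]$ (citing Lurie), which is the same colimit you write out explicitly; your telescope-plus-torsion argument is just a slightly more elementary way of spelling out the identical point.
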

\begin{proof}
According to Borel, the $\Zb$-module $K_{4n+1}(\Zb)$ has rank $1$ (see \cite[Proposition 12.2]{borel} or \cite[Chapter IV, Theorem 1.18]{K-book}). We infer from the preceding result that $K_{4n+1}(\Var)_{\loc}$ contains an element of infinite order.

Since $K_n(\Var)_{\loc}$ is given by the localisation of the $K_0(\Var)$-module $K_n(\Var)$ in $[\G_m] \in K_0(\Var)$ (see \cite[Proposition 7.2.3.25]{ha} and \cite[Example  7.5.0.7]{ha}), we see that there must exist an element of infinite order in $K_n(\Var)$.
\end{proof}

There is a natural morphism
$$\pi_n(\Sb) \otimes K_0(\Var) \to K_n(\Var),$$
see \cite[Section 7]{cwz}. Elements in the image are called \emph{permutative} or \emph{decomposable}.

For finite fields $k=\mathbb{F}_q$ non-permutative elements have been constructed for $K_1(\Var)$ by Zakharevich \cite{zaktalk}. These classes are also torsion in the $K$-group.

The following corollary provides an affirmative answer to \cite[Question 7.1]{cwz} for the field of complex numbers.

\begin{corollary}
For every positive integer $n$, there exists an \emph{indecomposable} (i.e. non-permutative) element in $K_{4n+1}(\Var)$.
\end{corollary}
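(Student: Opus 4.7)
The plan is to combine the previous corollary with a standard finiteness result in stable homotopy theory. By Corollary \ref{cor} we have, for each positive integer $n$, an element $\alpha \in K_{4n+1}(\mathsf{Var})$ of infinite order. It therefore suffices to argue that no element of infinite order can lie in the image of the decomposition map
\[
\pi_{4n+1}(\Sb) \otimes K_0(\mathsf{Var}) \longrightarrow K_{4n+1}(\mathsf{Var})\text{,}
\]
so that $\alpha$ is automatically indecomposable.

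The main input is Serre's finiteness theorem: for every $k \geq 1$, the stable homotopy group $\pi_k(\Sb)$ is a finite abelian group. Consequently, for any abelian group $A$ whatsoever, the tensor product $\pi_k(\Sb) \otimes A$ is a torsion abelian group (since it is annihilated by the finite order $|\pi_k(\Sb)|$). Applied to $k = 4n+1$ and $A = K_0(\mathsf{Var})$, this shows that the source of the decomposition map is torsion, and hence so is its image. In particular, every permutative element of $K_{4n+1}(\mathsf{Var})$ is of finite order.

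Since our element $\alpha$ from Corollary \ref{cor} has infinite order, it cannot be in this image, and is therefore indecomposable in the sense of \cite[Section 7]{cwz}. There is really no hard step here: the whole argument just packages Borel's rank computation, the retraction from Theorem \ref{thm1}, and Serre's finiteness theorem. The only thing to be a little careful about is to invoke the map from \cite[Section 7]{cwz} with the correct source, so that Serre's theorem actually applies; beyond that, the corollary is a direct consequence of Corollary \ref{cor}.
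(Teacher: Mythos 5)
Your proof is correct and is essentially the same as the paper's: both use that $\pi_k(\Sb)$ is torsion (finite, by Serre) for $k>0$, hence the image of $\pi_{4n+1}(\Sb)\otimes K_0(\Var)\to K_{4n+1}(\Var)$ is torsion, so the infinite-order element from Corollary \ref{cor} must be indecomposable.
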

\begin{proof}
The stable homotopy groups $\pi_n(\Sb)$ are torsion if $n > 0$. This implies that a decomposable element in $K_n(\Var)$ is torsion, whenever $n$ is positive. Hence, Corollary \ref{cor} implies the existence of an indecomposable class in $K_{4n+1}(\Var)$.
\end{proof}


\section{Retracts from abelian varieties}

\subsection{What changes?}

In \S \ref{section_torusconstruction} we have constructed a pseudo-retraction%
\[
K(\mathbb{Z})_{0}^{\circ}\longrightarrow K(\mathsf{Var}_{\mathsf{loc}%
})_{\mathbf{1}}^{\circ}\longrightarrow K(\mathbb{Q})^{\circ}_0
\]
based on split tori and for the field $k=\mathbb{C}$. In this section we
present a more elaborate version of the same construction. We replace $\Cb$ by a perfect field,
$\mathbb{G}_{m}$ by an abelian variety and the mixed Hodge realisation by
$1$-isomotives. This complicates the technical details, yet the homotopical
constructions remain the same, mutatis mutandis.

Another slight difference is that when working with the torus, weights $0$ and $-2$ played the vital role, while for abelian varieties the weights $0$ and $-1$ become important and we shall entirely discard weight $-2$ information. Presumably one could set up a uniform construction for semi-abelian varieties, but this seems more involved because we would have to keep track of weights $0,-1,-2$ simultaneously and the added generality does not give us anything we cannot obtain by dealing with both cases separately.

\subsection{Construction of the incoming map}

Let $k$ be a perfect field. Let $A$ be an abelian variety over $k$. Let $\mathcal{O}%
:=\operatorname*{End}(A)$ be the endomorphism ring of $A$ as a commutative
group variety. Define%
\[
F:=\mathcal{O}\otimes_{\mathbb{Z}}\mathbb{Q}\text{.}%
\]
Then, $F$ is a finite-dimensional semisimple $\mathbb{Q}$-algebra and
$\mathcal{O}\subset F$ a ($\Zb$-)order. Write $\mathsf{APow}\subset\mathsf{Var}$ for the full subcategory of powers of
$A$ (that is: the category of varieties which admit an isomorphism to $A^{n}$
for some $n\geq0$).

There is a symmetric monoidal functor%
\[
(F_{f}(\mathcal{O}),\oplus)^{\times}\longrightarrow(\mathsf{APow}%
,\times)^{\times}%
\]
sending $\mathcal{O}$ to $A$.

\begin{rmk}
Entirely analogously to the situation of Remark \ref{rmk_WhatAreTheMapsForGm},
this is a faithful functor and the morphisms in its image are those preserving
the neutral element of the abelian variety.
\end{rmk}

We define%
\[
\mathsf{Var}_{\mathsf{loc}}:=\mathsf{Var}[A^{-1}]\text{.}%
\]
We obtain the same diagram (\ref{lcc1}) with $\mathbb{Z}$ replaced by
$\mathcal{O}$ and $\mathbb{G}_{m}$ by $A$. Exactly as for \eqref{lcc1} we obtain a diagram

\begin{equation}\label{lcc1_av_version}
\xymatrix{
\BGL_n(\Oo) \ar[r] \ar[d] & \BGL_{n+1}(\Oo) \ar[d] \\
\big(K(\Var)_{\loc}\big)^{\circ}_{A^n} \ar[r]^-{\times A}_{\sim } & \big(K(\Var)_{\loc}\big)^{\circ}_{A^{n+1}}
}
\end{equation}
The analogue of Lemma
\ref{lemma:comm3} is proven in exactly the same way.

\begin{lemma}\label{lemma:comm3_av}
There exists a morphism $i\colon K(\Oo)^{\circ}_0 \to (K(\Var)_{\loc},\times)$ of $H$-spaces such that the following diagram
\[
\xymatrix{
\BGL(\Oo) \ar[rd]^{\tilde{i}} \ar[d]_{c} &  \\
K(\Oo)^{\circ}_0 \ar@{-->}[r]^-i & \big(K(\Var)_{\loc}\big)^{\circ}_{\mathbf{1}}
}
\]
commutes.
\end{lemma}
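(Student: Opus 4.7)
The plan is to imitate the proof of Lemma \ref{lemma:comm3} verbatim, using the diagram \eqref{lcc1_av_version} in place of \eqref{lcc1}. First I would take the homotopy colimit over $n$ on both sides of \eqref{lcc1_av_version}, composing with the equivalences $\times A^{-n}\colon \big(K(\Var)_{\loc}\big)^{\circ}_{A^n} \simeq \big(K(\Var)_{\loc}\big)^{\circ}_{\mathbf{1}}$ which stabilise at $\mathbf{1}$ because $A$ has been inverted in the localisation. This produces the map
\[
\tilde{i}\colon \BGL(\Oo) = \colim_n \BGL_n(\Oo) \longrightarrow \big(K(\Var)_{\loc}\big)^{\circ}_{\mathbf{1}},
\]
mirroring the construction preceding Lemma \ref{lemma:comm3} in the torus case.

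Next I would observe that the target $\big(K(\Var)_{\loc}\big)^{\circ}_{\mathbf{1}}$ carries an $H$-space structure coming from the multiplicative $E_{\infty}$-structure on $K(\Var)$ (restricted to the path-component of the unit $\mathbf{1}$ after localisation, so that the base-point is actually a unit for $\times$). The key input is then the analogue of Theorem \ref{thm:magic} for the group $\GL(\Oo)$: since the commutator subgroup $[\GL(\Oo),\GL(\Oo)]$ is perfect (the standard fact used in defining $K(\Oo)^{\circ}_0 = \BGL(\Oo)^+$), the Quillen--Wagoner almost-universal property of the plus construction applies. This gives a dashed map $i\colon K(\Oo)^{\circ}_0 \to \big(K(\Var)_{\loc}\big)^{\circ}_{\mathbf{1}}$ making the required triangle commute up to homotopy, and the induced map on homotopy groups is independent of the choice of lift.

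The hard part, if any, is just the verification that the almost-universal property of the plus construction is available for $\BGL(\Oo)$; but this is standard, since $\Oo$ is an order in a semisimple $\Qb$-algebra (hence has a notion of elementary matrices and $E(\Oo)$ is perfect for $n \geq 3$), and the identification $K(\Oo)^{\circ}_0 \simeq \BGL(\Oo)^+$ is the usual Quillen definition for an associative ring. No new input beyond that which was used in Lemma \ref{lemma:comm3} is required; the proof goes through \emph{mutatis mutandis}, which is why the paper can (and likely will) dispatch it in a single sentence referring back to the torus case.
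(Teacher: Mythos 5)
Your proposal is correct and takes exactly the paper's approach: the paper itself simply notes that "the analogue of Lemma \ref{lemma:comm3} is proven in exactly the same way," relying on the almost-universal property of the plus construction (Theorem \ref{thm:magic}) applied to the stabilised map $\tilde{i}$ built from the diagram \eqref{lcc1_av_version}. Your additional remarks about perfectness of $E(\Oo)$ for an order $\Oo$ and the $H$-space structure on the unit component are correct and merely spell out what the paper leaves implicit.
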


\subsection{Constructions from the theory of Deligne \texorpdfstring{$1$}{1}-motives}

We recall the most important ingredients from the theory of $1$-motives. Our
primary source is \cite{bv}. All our group schemes will be assumed commutative
and over $k$. Let $\mathsf{GrpSchm}$ be the category of commutative group
scheme of \textit{locally} finite type. This is an abelian category by
\cite[Lemma 1.1.1]{bv}. By a \emph{quasi-lattice} $L$ we mean a group
scheme which \'{e}tale locally is isomorphic to a finitely generated abelian
constant group scheme. Torsion is permitted. Write $\mathsf{AV}\subset
\mathsf{GrpSchm}$ for the full subcategory of abelian varieties.

A $1$\emph{-motive with torsion} (\cite[\S 1.9]{bv}) is a complex of group
schemes%
\[
M=[L\overset{u}{\longrightarrow}G]\text{,}%
\]
where $L$ is a quasi-lattice and $G$ semi-abelian. We may regard this as a
full subcategory of the category of bounded complexes of $\mathsf{GrpSchm}$.
We write ${\mathsf{Mot}}_{1}^{\operatorname*{tor}}$ for the category of
$1$-motives with torsion. Its objects are $1$-motives with torsion and
morphisms are%
\begin{equation}
\operatorname*{Hom}\nolimits_{{\mathsf{Mot}}_{1}^{\operatorname*{tor}}%
}(M,M^{\prime}):=\operatorname*{Hom}\nolimits_{\operatorname*{complex}%
}(M,M^{\prime})[qis^{-1}]\text{,} \label{eq_l1}%
\end{equation}
where $\operatorname*{Hom}\nolimits_{\operatorname*{complex}}(M,M^{\prime})$
denotes morphisms between the underlying complexes of $k$-group schemes, and
$qis$ the family of quasi-isomorphisms of complexes. Being a $2$-term complex,
this amounts to inducing an isomorphism on the respective $\ker(u)$ and
$\operatorname*{coker}(u)$ of $[L\overset{u}{\longrightarrow}G]$. See
\cite[C.3.1]{bv} for details.

\begin{rmk}
\label{rmk_IdentifyQis}It is useful to recall an explicit description of the
quasi-isomorphisms: They amount precisely to the morphisms $f:M\rightarrow
M^{\prime}$ of complexes%
\[
f:[L\overset{u}{\longrightarrow}G]\longrightarrow\lbrack L^{\prime}%
\overset{u^{\prime}}{\longrightarrow}G^{\prime}]
\]
which induce (1) an isomorphism $\ker(L\rightarrow L^{\prime})\cong%
\ker(G\rightarrow G^{\prime})$, and (2) such that these are both finite
\'{e}tale group schemes. The proof for this is given as \cite[C.2.2 (2)]{bv}.
\end{rmk}

Every $1$-motive $M$ comes with its canonical increasing weight filtration:
Write $M=[L\overset{u}{\longrightarrow}G]$ and let%
\[
W_{i}(M):=\left\{
\begin{array}
[c]{ll}%
M & \text{for }i\geq0\\
G & \text{for }i=-1\\
T & \text{for }i=-2\\
0 & \text{for }i\leq-3\text{,}%
\end{array}
\right.
\]
where $T\hookrightarrow G\twoheadrightarrow A$ is the presentation of the
semi-abelian variety $G$ as an extension of an abelian variety by a torus $T$.
Moreover,%
\begin{equation}
\operatorname*{Gr}\nolimits_{-2}^{W}(M)=[0\longrightarrow T]\text{,}%
\qquad\operatorname*{Gr}\nolimits_{-1}^{W}(M)=[0\longrightarrow A]\text{,}%
\qquad\operatorname*{Gr}\nolimits_{0}^{W}(M)=[L\longrightarrow0]\text{.}
\label{lwax0}%
\end{equation}

We write ${\mathsf{Mot}}_{1}^{\operatorname*{iso}}$ for the category
${\mathsf{Mot}}_{1}^{\operatorname*{tor}}\otimes\mathbb{Q}$, i.e. the category
with the same objects, but its homomorphism groups get tensored with
$\mathbb{Q}$. The category ${\mathsf{Mot}}_{1}^{\operatorname*{iso}}$ is
abelian (\cite[C.5.3]{bv} or \cite[Lemma 3.2.2]{MR2102056}). It also carries a
symmetric monoidal product%
\[
\otimes_{1}\colon{\mathsf{Mot}}_{1}^{\operatorname*{iso}}\times{\mathsf{Mot}%
}_{1}^{\operatorname*{iso}}\longrightarrow{\mathsf{Mot}}_{1}%
^{\operatorname*{iso}}%
\]
described in \cite[Proposition 7.1.2]{bv}, which respects the weight filtration and
is bi-exact, see loc. cit.

Analogously, it has a symmetric monoidal full subcategory $\mathsf{AV}%
^{\operatorname*{iso}}$ of abelian varieties up to isogeny.

\begin{rmk}
\label{rmk_VersionsOfOneMotiveCategories}Many variations are possible. In
characteristic zero the category ${\mathsf{Mot}}_{1}^{\operatorname*{tor}}$ is
itself abelian (\cite[C.5.3]{bv}). In characteristic $p>0$ the category
${\mathsf{Mot}}_{1}^{\operatorname*{tor}}\otimes\mathbb{Z}[\frac{1}{p}]$ is
abelian. Without inverting $p$, the finite group scheme kernel of the isogeny
$[p]\colon A\rightarrow A$ need not be \'{e}tale. One can also study
$1$-motives without torsion, denoted by ${\mathsf{Mot}}_{1}$. This is not an
abelian category, but after tensoring with $\mathbb{Q}$ also gives the same
category ${\mathsf{Mot}}_{1}^{\operatorname*{iso}}$.
\end{rmk}

\begin{lemma}
\label{lemma_KThyOfAVIsogenyCategory}$K(\mathsf{AV}^{\operatorname*{iso}%
})\cong\bigoplus_{i\in I}K(\operatorname*{End}\nolimits_{k}A_{i})$, where $I$
is the set of isogeny classes of simple abelian varieties over $k$ and $A_{i}$
a representative of the isogeny class. Moreover, $\operatorname*{End}%
\nolimits_{k}A_{i}$ is a finite-dimensional division algebra over $\mathbb{Q}$.
\end{lemma}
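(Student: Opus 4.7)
My plan is to reduce the statement to a classical structural theorem for K-theory of semisimple abelian categories, and then invoke standard results on endomorphism algebras of abelian varieties. The overall strategy has three steps: identify $\mathsf{AV}^{\operatorname{iso}}$ as a semisimple $\mathbb{Q}$-linear abelian category, apply a Morita-type block decomposition, and compute K-theory block by block.

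First, I would invoke Poincaré's complete reducibility theorem (which holds over any perfect field): every abelian variety over $k$ is isogenous to a product of simple abelian varieties. Equivalently, $\mathsf{AV}^{\operatorname{iso}}$ is a semisimple $\mathbb{Q}$-linear abelian category whose simple objects are precisely the isogeny classes of simple abelian varieties, indexed by $I$ with chosen representatives $A_{i}$. By Schur's lemma applied inside this $\mathbb{Q}$-linear abelian category, $D_{i}:=\operatorname{End}_{\mathsf{AV}^{\operatorname{iso}}}(A_{i})=\operatorname{End}_{k}(A_{i})\otimes_{\mathbb{Z}}\mathbb{Q}$ is a division ring. Finite-dimensionality of $D_{i}$ over $\mathbb{Q}$ is classical: the Tate module gives an embedding $\operatorname{End}_{k}(A_{i})\hookrightarrow\operatorname{End}(T_{\ell}A_{i})$ for any $\ell\ne\operatorname{char}(k)$, and the right-hand side is a free $\mathbb{Z}_{\ell}$-module of rank $(2\dim A_{i})^{2}$; hence $\operatorname{End}_{k}(A_{i})$ is a finitely generated $\mathbb{Z}$-module and $D_{i}$ is a finite-dimensional division algebra over $\mathbb{Q}$.

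Second, for each $i\in I$ I would consider the exact functor
\[
V_{i}:=\operatorname{Hom}_{\mathsf{AV}^{\operatorname{iso}}}(A_{i},-)\colon\mathsf{AV}^{\operatorname{iso}}\longrightarrow\operatorname{Vect}_{fd}(D_{i})
\]
to finite-dimensional (right) $D_{i}$-modules. The product functor $(V_{i})_{i\in I}$ is an equivalence of abelian categories onto $\bigoplus_{i\in I}\operatorname{Vect}_{fd}(D_{i})$, where the direct sum is understood as tuples with only finitely many nonzero entries; this is a direct transcription of Poincaré's theorem. Since $D_{i}$ is a division algebra, every finite-dimensional $D_{i}$-module is free, so $\operatorname{Vect}_{fd}(D_{i})$ agrees with $P_{f}(D_{i})$ as Waldhausen categories. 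Applying $K$-theory and using that it commutes with direct sums of abelian categories (finite case by additivity, infinite case by the filtered colimit over finite subsets of $I$), I obtain
\[
K(\mathsf{AV}^{\operatorname{iso}})\cong\bigoplus_{i\in I}K(\operatorname{Vect}_{fd}(D_{i}))\cong\bigoplus_{i\in I}K(D_{i}),
\]
which is the asserted formula (with the understanding that $\operatorname{End}_{k}A_{i}$ in the statement refers to the endomorphism algebra inside $\mathsf{AV}^{\operatorname{iso}}$, i.e.\ after inverting isogenies).

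The main obstacle is essentially bookkeeping rather than substance: none of the ingredients are difficult individually. The one point worth verifying carefully is the compatibility of $K$-theory with an infinite direct sum of abelian categories when $I$ is infinite; this is handled by writing $\mathsf{AV}^{\operatorname{iso}}=\varinjlim_{F}\mathsf{AV}^{\operatorname{iso}}_{F}$, where $F$ ranges over finite subsets of $I$ and $\mathsf{AV}^{\operatorname{iso}}_{F}$ denotes the full subcategory of objects isogenous to $\prod_{i\in F}A_{i}^{n_{i}}$, and then invoking that $K$-theory commutes with filtered colimits of exact categories together with the additivity theorem on each finite stage.
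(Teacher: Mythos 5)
Your proposal is correct and follows essentially the same route as the paper: Poincar\'e reducibility gives semisimplicity, Schur's lemma gives the division algebras, the category decomposes as $\bigoplus_{i\in I}P_{f}(D_{i})$, and one finishes with the compatibility of $K$-theory with finite products and filtered colimits. The only differences are in level of detail — you spell out the comparison functor $\operatorname{Hom}(A_{i},-)$, the Tate-module argument for finite-dimensionality, and the filtered-colimit reduction for infinite $I$, all of which the paper leaves implicit.
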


\begin{proof}
By Poincar\'{e} reducibility the category $\mathsf{AV}^{\operatorname*{iso}}$
is abelian semisimple. Thus, there is an equivalence of categories%
\[
\mathsf{AV}^{\operatorname*{iso}}\overset{\sim}{\longrightarrow}%
\bigoplus_{i\in I}P_{f}(\operatorname*{End}\nolimits_{k}A_{i})\text{,}%
\]
where $P_{f}$ denotes the category of finitely generated
projective\footnote{As the endomorphism rings are division algebras by Schur's
Lemma, all modules are projective.} modules and $I$ is the set of isomorphism
classes of the simple objects of the category. Finally, $K(\mathcal{C}%
\times\mathcal{C}^{\prime})\cong K(\mathcal{C})\oplus K(\mathcal{C}^{\prime})$
for arbitrary $\mathcal{C},\mathcal{C}^{\prime}$ and $K$-theory commutes with
filtering colimits.
\end{proof}

Let $\mathsf{\tilde{T}}_{\mathbb{Z}}$ be the category of $2$-term complexes%
\[
\lbrack L\overset{0}{\longrightarrow}A]\text{,}%
\]
concentrated in degrees $[0,1]$, with $L$ a quasi-lattice, $A$ an abelian
variety and zero differential. We may regard this as a complex in the abelian
category $\mathsf{GrpSchm}$. The category $\mathsf{GrpSchm}$ is also symmetric
monoidal as the product of group schemes is again a group scheme.

Define $\mathsf{\tilde{T}}:=\mathsf{\tilde{T}}_{\mathbb{Z}}\otimes\mathbb{Q}$.
The category $\mathsf{\tilde{T}}$ has a symmetric monoidal structure coming
from the multiplication of complexes:%
\begin{equation}
\lbrack L_{1}\overset{0}{\longrightarrow}A_{1}]\otimes\lbrack L_{2}\overset
{0}{\longrightarrow}A_{2}]=\tau_{\leq1}\operatorname*{Total}\left(
\begin{array}
[c]{ccc}%
L_{1}\otimes A_{2} & \longrightarrow & A_{1}\otimes A_{2}\\
\uparrow &  & \uparrow\\
L_{1}\otimes L_{2} & \longrightarrow & L_{2}\otimes A_{1}%
\end{array}
\right)  \text{,}\label{lwax3}%
\end{equation}
i.e. we truncate the total complex of the double complex to degrees $[0,1]$.
This corresponds to removing the entry $A_{1}\otimes A_{2}$ in the double
complex. (All morphisms in the double complex on the right are zero maps).

If $(X,x)$ is a variety and $x\in X(k)$ a $k$-rational point, one may consider
the collection of maps%
\[
\left\{  (A,f)\mid f\colon(X,x)\longrightarrow(A,0)\right\}  \text{,}%
\]
where $A$ is an abelian variety and $f$ a morphism mapping the distinguished
point $x$ to the neutral element of $A$. The \emph{Albanese}
$\operatorname*{Alb}(X,x)$ is the initial object in this family.

\begin{lemma}
\label{lemma_Psi}The functor%
\begin{align}
\Psi\colon({\mathsf{Mot}}_{1}^{\operatorname*{iso}},\otimes_{1}) &
\longrightarrow(\mathsf{\tilde{T},\otimes})\nonumber\\
\lbrack L\overset{u}{\longrightarrow}G] &  \longmapsto\lbrack L\overset
{0}{\longrightarrow}\operatorname*{Alb}(G,1_{G})]\text{,}\label{l_v2}%
\end{align}
is exact and symmetric monoidal.
\end{lemma}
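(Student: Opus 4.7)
The plan is to recognise $\Psi$ as the functor which extracts the weight $0$ and weight $-1$ graded pieces of the canonical weight filtration of a $1$-motive and assembles them into a $2$-term complex with zero differential. Once this identification is in place, the properties claimed in the lemma follow from the general theory of the weight filtration on ${\mathsf{Mot}}_1^{\operatorname{iso}}$ developed in \cite{bv}.

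First I would verify the behaviour of $\Psi$ on objects. For a semi-abelian variety $G$ sitting in the extension $0\to T\to G\to A\to 0$, the rigidity lemma implies that any pointed morphism from a connected algebraic group to an abelian variety is a group homomorphism, and therefore kills the torus $T$. Hence $\operatorname{Alb}(G,1_G)\cong A$ is canonically the maximal abelian quotient, and $\Psi([L\to G])$ is canonically isomorphic to $\operatorname{Gr}_0^W([L\to G])\oplus\operatorname{Gr}_{-1}^W([L\to G])$ viewed as an object of $\mathsf{\tilde{T}}_{\mathbb{Z}}$. For morphisms, a map of underlying complexes $f\colon[L\to G]\to[L'\to G']$ furnishes $f_L\colon L\to L'$ and $f_G\colon G\to G'$; the latter descends to $A\to A'$ by functoriality of $\operatorname{Alb}$, and the pair assembles to a morphism in $\mathsf{\tilde{T}}_{\mathbb{Z}}$, compatibility with the zero differential being automatic. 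To descend to the isogeny category I would invoke Remark \ref{rmk_IdentifyQis}: quasi-isomorphisms of $1$-motives have finite étale kernel and cokernel, which become trivial after tensoring with $\mathbb{Q}$, so $\Psi$ sends them to isomorphisms in $\mathsf{\tilde{T}} = \mathsf{\tilde{T}}_{\mathbb{Z}}\otimes\mathbb{Q}$.

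For exactness, I would use that ${\mathsf{Mot}}_1^{\operatorname{iso}}$ is abelian with strict weight filtration (see \cite[C.5.3]{bv}), so that $\operatorname{Gr}_0^W$ and $\operatorname{Gr}_{-1}^W$ are exact. The target $\mathsf{\tilde{T}}$ decomposes naturally as a direct sum of isogeny quasi-lattices and $\mathsf{AV}^{\operatorname{iso}}$, both abelian (the latter being semisimple by Poincar\'e reducibility, as in the proof of Lemma \ref{lemma_KThyOfAVIsogenyCategory}). Hence $\Psi$, being $\operatorname{Gr}_0^W\oplus\operatorname{Gr}_{-1}^W$ under the identification above, is exact.

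For the symmetric monoidal structure, I would appeal to the key input from \cite[Proposition 7.1.2]{bv} that $\otimes_1$ is bi-exact and respects the weight filtration. A direct computation then yields $\operatorname{Gr}_0^W(M_1\otimes_1 M_2)=L_1\otimes L_2$ and $\operatorname{Gr}_{-1}^W(M_1\otimes_1 M_2)=(L_1\otimes A_2)\oplus(A_1\otimes L_2)$. On the other hand, the truncated total complex \eqref{lwax3} computing $\Psi(M_1)\otimes\Psi(M_2)$ has exactly these terms in degrees $0$ and $1$ with zero differential, so one obtains a natural isomorphism $\Psi(M_1\otimes_1 M_2)\cong\Psi(M_1)\otimes\Psi(M_2)$; the associator, symmetry and unit constraints are inherited from those on the underlying complexes. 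The main technical obstacle I foresee is the verification of well-definedness on morphisms in the isogeny category: one has to check cleanly that $\Psi$ inverts quasi-isomorphisms, which requires unpacking the explicit characterisation of quasi-isomorphisms in Remark \ref{rmk_IdentifyQis} and using that finite étale group schemes vanish rationally.
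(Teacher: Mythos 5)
Your proposal is correct and follows essentially the same route as the paper: descend to the isogeny category via Remark~\ref{rmk_IdentifyQis}, argue exactness through the weight-graded pieces (the paper instead cites Brion \cite[Lemma 4.7, Proposition 4.8]{MR3650225} for exactness of $\operatorname{Alb}$ up to isogeny, which amounts to the same thing as your strictness-of-weights argument), and deduce monoidality from \cite[Proposition 7.1.2]{bv} by tracking weight-graded pieces of $\otimes_1$ against the truncated total complex \eqref{lwax3}. Your framing of $\Psi$ as $\operatorname{Gr}_0^W\oplus\operatorname{Gr}_{-1}^W$ from the outset is slightly cleaner conceptually, but it is the same observation the paper records right after the lemma ("the functor can be understood as truncating to weights $0$ and $-1$").
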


The functor can be understood as truncating to weights $0$ and $-1$.

\begin{proof}
Equation \eqref{l_v2} defines a functor ${\mathsf{Mot}}_{1}^{\operatorname*{tor}%
}\rightarrow\mathsf{\tilde{T}}$ (the equivalence relation in \eqref{eq_l1} for homomorphisms poses no issue as quasi-isomorphisms are surjective with a finite group scheme as their kernel by Remark \ref{rmk_IdentifyQis} and thus genuine isomorphisms after tensoring the category $\mathsf{\tilde{T}}_{\Zb}$ with $\Qb$). The Albanese functor
$G\mapsto\operatorname*{Alb}(G,1_{G})$ is exact up to isogeny, \cite[Lemma
4.7, Proposition 4.8]{MR3650225}. It remains to settle monoidality. The natural transformations which are part of the datum of a monoidal functor, e.g. \eqref{l_nattransfstrongmonoidal}, stem mostly from the fact that we only truncate the usual product of complexes, along with a little side discussion because of the built-in Albanese. The tensor
product $\otimes_{1}$ on ${\mathsf{Mot}}_{1}^{\operatorname*{iso}}$ is set up
in \cite[Proposition 7.1.2]{bv}. By (c) \textit{loc. cit.} the product of
$[L_{1}\overset{u_{1}}{\longrightarrow}G_{1}]\otimes\lbrack L_{2}%
\overset{u_{2}}{\longrightarrow}G_{2}]$ is given by%
\[
\lbrack L_{1}\otimes L_{2}\overset{\tilde{u}}{\longrightarrow}G]\text{,}%
\]
where%
\begin{equation}
\operatorname*{Biext}(G_{1}\otimes^{\mathbb{L}}G_{2},\mathbb{G}_{m})^{\ast
}\hookrightarrow G\twoheadrightarrow\left(  L_{1}\otimes G_{2}\right)
\oplus\left(  L_{2}\otimes G_{1}\right)  \text{,}\label{lwax2a}%
\end{equation}
where $(-)^{\ast}$ is the generalised Cartier dual for $1$-motives\footnote{It
restricts to the ordinary Cartier dual on tori or quasi-lattices, but for
example an abelian variety goes to its dual abelian variety (whereas the
na\"{\i}ve Cartier dual of an abelian variety would be zero).}. We note that
the $G_{i}$ are in weights $\{-1,-2\}$, and since the tensor product respects
weights by (d) \textit{loc. cit.}, the term $\operatorname*{Biext}(G_{1}%
\otimes^{\mathbb{L}}G_{2},\mathbb{G}_{m})^{\ast}$ sits in weight $-2$ for
weight reasons, (d) also settles the exactness. We therefore see that
\eqref{l_v2} sends this to the tensor product of \eqref{lwax3}.
\end{proof}

Using \cite{paperletto} we obtain a realisation functor to $1$-motives%
\[
R_{1}\colon\mathsf{Var}\longrightarrow{\mathsf{Mot}}_{1}\text{.}%
\]
We only need the simpler version with $\mathbb{Q}$-coefficients even though we remark that 
\cite{bv} and \cite[\S 2.11]{MR3004172} discuss the integral version as well. This is
a weakly $W$-exact functor by the constructions in \cite{paperletto}.

\begin{rmk}
The functor underlying the realisation to $1$-motives is a localisation
functor. Concretely, it can also be understood as the left derived functor of
the motivic Albanese (\cite{bv, MR2494373}).
\end{rmk}

Based on the above realisation, we build a simpler weakly $W$-exact functor%
\[
R\colon\mathsf{Var}\overset{R_{1}}{\longrightarrow}{\mathsf{Mot}}%
_{1}^{\operatorname*{iso}}\overset{\Psi}{\longrightarrow}\mathsf{\tilde{T}}%
\]
by composing the realisation to $1$-motives, tensored with $\mathbb{Q}$, and
postcomposing with the exact functor $\Psi$.

Lemma \ref{lemma_commute_realiz} is easy to adapt.

\begin{lemma}
For every $X \in \Var$ we have a commutative diagram 
\[
\xymatrix{
K(\Var) \ar[r]^{X \times ? } \ar[d] & K(\Var) \ar[d] \\
K({\widetilde{\mathsf{T}}}) \ar[r]^{R(X) \otimes ?} & K({\widetilde{\mathsf{T}}})
}
\]
in the homotopy category of spaces.
\end{lemma}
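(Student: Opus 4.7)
My plan is to adapt the proof of Lemma \ref{lemma_commute_realiz} essentially verbatim, replacing Vologodsky's mixed Hodge realisation by the composite $R = \Psi \circ R_1$. The strategy has three ingredients: (i) replace $\widetilde{\mathsf{T}}$ up to a zig-zag of DG equivalences by a DG category so that $R_1$, regarded as going through $\mathsf{Mot}_1^{\iso}$, and $\Psi$ become genuine (not merely quasi-) DG functors; (ii) verify that the two maps in the square are each induced by a weakly $W$-exact functor; (iii) exhibit a Künneth-type natural equivalence between these two functors.

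First, as in the proof of Lemma \ref{lemma_commute_realiz}, the functor ``right then down'' is the composition of the $E_\infty$-ring multiplication $X \times ?$ on $K(\mathsf{Var})$ with the weakly $W$-exact functor $R$, and is therefore induced by the weakly $W$-exact functor
\[
R_X \colon \mathsf{Var} \longrightarrow \widetilde{\mathsf{T}}, \qquad V \longmapsto R(V \times X).
\]
For ``down then right'', tensoring with $R(X)$ is an exact endofunctor of $\widetilde{\mathsf{T}}$ by the bi-exactness of $\otimes_1$ on $\mathsf{Mot}_1^{\iso}$ (used in the proof of Lemma \ref{lemma_Psi}) combined with the exactness of $\Psi$, so after the DG replacement above it is a genuine DG endofunctor. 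Consequently $r_X \defeq (\otimes R(X)) \circ R$ is a weakly $W$-exact functor, inducing the map $(\widetilde{R}(X) \otimes ?) \circ R$.

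Second, I would produce the natural equivalence by assembling two layers of monoidality, in parallel to equation \eqref{l_KuennethProperty}. The construction of \cite{paperletto} factors $R_1$ through the $cdh$/$ldh$-extension of $(\mathsf{Sm},\times_k) \to (\mathcal{DM}^{\operatorname{eff}}_{\acute{e}t}, \otimes)$ composed with the localisation functor to $\mathsf{Mot}_1^{\iso}$ (the motivic Albanese), both of which are symmetric monoidal; this yields a natural isomorphism $R_1(W \times X) \cong R_1(W) \otimes_1 R_1(X)$ in $\mathsf{Mot}_1^{\iso}$. Applying the symmetric monoidal functor $\Psi$ of Lemma \ref{lemma_Psi} then produces a natural isomorphism $R(W \times X) \cong R(W) \otimes R(X)$ in $\widetilde{\mathsf{T}}$, i.e. a natural equivalence of functors $\mathsf{Var} \to \widetilde{\mathsf{T}}$,
\[
R(- \times X) \;\Longrightarrow\; R(-) \otimes R(X).
\]

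Finally, as in Step 3 of the proof of Lemma \ref{lemma_commute_realiz}, such a natural equivalence between two weakly $W$-exact functors induces a homotopy between the two maps $K(\mathsf{Var}) \to K(\widetilde{\mathsf{T}})$ that they produce, which is exactly the homotopy commutativity asserted by the diagram. The main obstacle is purely bookkeeping: ensuring that the symmetric monoidal structure survives both the passage from smooth varieties to all varieties via $cdh$/$ldh$ descent and the passage from $\mathsf{Mot}_1^{\operatorname{tor}}$ to $\mathsf{Mot}_1^{\iso}$, so that the Künneth isomorphism is genuinely natural at the level of (weakly $W$-exact) functors rather than only on isomorphism classes. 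For this I would cite \cite[\S 2.3, \S 6.9.2]{MR2399083}, \cite{MR3673293}, and the monoidality of $\otimes_1$ established in \cite[Proposition 7.1.2]{bv} and already used in Lemma \ref{lemma_Psi}.
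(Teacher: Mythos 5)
Your proposal follows the paper's argument essentially verbatim: reduce to exhibiting the K\"unneth natural transformation $R(-\times X)\Rightarrow R(-)\otimes R(X)$, factor $R = \Psi \circ R_1$, and invoke the monoidality of $\mathcal{DM}^{\eff}_{gm}\to\mathsf{Mot}_1^{\operatorname{iso}}$ (\cite[Proposition 7.1.2(a)]{bv}) together with Lemma \ref{lemma_Psi} for $\Psi$. The only refinement the paper adds is the observation that the $cdh$/$ldh$-descent caveat in positive characteristic is automatically harmless here because $1$-isomotives and $\widetilde{\mathsf{T}}$ are $\mathbb{Q}$-linear, which dispatches the bookkeeping concern you flagged at the end.
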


\begin{proof}
The beginning of the proof is exactly the same for Lemma \ref{lemma_commute_realiz}, just replace $\MHS_{\Zb}^{\eff}$ with $\widetilde{\mathsf{T}}$. With this
modification, we reduce to having to exhibit a natural transformation of
functors%
\[
R(-\times X)\Rightarrow R(-)\otimes R(X)\text{.}%
\]
We do this in two steps: As $R=\Psi\circ R_{1}$, we first use the\ K\"{u}nneth
property of the realisation $R_{1}$ to $1$-isomotives. This amounts to showing
that $\mathcal{DM}_{gm}^{\eff}\rightarrow\mathsf{Mot}_{1}^{\operatorname*{iso}%
}$ respects the tensor product (\cite[Proposition 7.1.2., (a)]{bv}), and that $\Psi$
respects the tensor product (Lemma \ref{lemma_Psi}).

In the situation of positive characteristic (and only then) we should point out that our discussion of the strong monoidal functor from varieties to mixed motives found around \eqref{l_DiscussCDHDescentExtension} required working with motives with coefficients in $\mathbb{Z}[\frac{1}{p}]$-coefficients. Since $1$-isomotives and $\widetilde{\mathsf{T}}$ have $\Qb$-coefficients, this is fine in the situation at hand.
\end{proof}

We adapt Definition \ref{defi:loc-maps}, replacing the torus by the fixed abelian variety $A$.

\begin{definition}
We define a map $\tilde{R}$ by means of the universal property of colimits
\[
\xymatrix{
K(\Var)_{\loc} \ar@{=}[d] \ar[r]^{\tilde{R}} & K({\widetilde{\mathsf{T}}})_{\loc} \ar@{=}[d] \\
K(\Var)_{\loc}\ar@{=}[d] \ar[r]^-{\tilde{R}} &  K({\widetilde{\mathsf{T}}})[R(A)^{-1}] \ar@{=}[d] \\
\colim [K(\Var) \xrightarrow{A \times ?} \cdots] \ar[r]^-R & \colim [K({\widetilde{\mathsf{T}}}) \xrightarrow{R(A) \otimes ?} \cdots].
}
\]
\end{definition}

Note that our notation $K(\Var)_{\loc}$ suppresses the choice of $A$.

\begin{definition}
Define%
\begin{align*}
w_{-1}\colon K(\mathsf{\tilde{T}}) &  \longrightarrow K(\mathsf{AV}%
^{\operatorname*{iso}})\\
M &  \longmapsto\operatorname*{Gr}\nolimits_{-1}^{W}(M)
\end{align*}
by sending an object to its weight $-1$ piece.
\end{definition}

\begin{proof}
The functor $[L\overset{0}{\longrightarrow}A]\mapsto A$ is clearly exact and
thus induces a map on $K$-theory.
\end{proof}

Next we deduce the immediate analogue of Lemma \ref{lemma_factoriz}.

\begin{lemma}
There exists a factorisation
\[
\xymatrix{
K({\mathsf{\tilde{T}}}) \ar[rd]^{w_{-1}} \ar[d] & \\
K({\mathsf{\tilde{T}}})_{\loc} \ar@{-->}[r] & K(\mathsf{AV}^{\operatorname{iso}}).
}
\]
\end{lemma}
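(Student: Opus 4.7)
The plan is to mirror the proof of Lemma \ref{lemma_factoriz}, with the key simplification that because $\widetilde{\mathsf{T}}$ only carries weights $0$ and $-1$, the analog of the weight-truncation Serre subcategory already degenerates to zero.

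First I would identify $[R(A)] \in K_0(\widetilde{\mathsf{T}})$. Writing $R = \Psi \circ R_1$, the $1$-motive $R_1(A)$ of an abelian variety takes the form $[\mathbb{Z} \to A]$ with zero differential (weight-$0$ lattice $\mathbb{Z}$ from $H_0$ in degree $0$, weight-$(-1)$ abelian part $A$ in degree $1$); since $\operatorname{Alb}(A,0) = A$, one also has $R(A) = [\mathbb{Z} \to A]$ in $\widetilde{\mathsf{T}}$. With the sign convention used in Lemma \ref{lemma_factoriz} (under which $[H_*^{\rm BM}(\G_m)] = \Lb - 1$), the corresponding $K$-theory class is $[R(A)] = 1 - [A]$. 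I would then observe that $[A]$ is square-zero in $K_0(\widetilde{\mathsf{T}})$: by the tensor formula \eqref{lwax3}, the product $[0 \to A_1] \otimes [0 \to A_2]$ is a double complex whose only possibly-nonzero term $A_1 \otimes A_2$ sits in total degree $2$, and this is killed by $\tau_{\leq 1}$. (Equivalently, the intrinsic product \eqref{lwax2a} in $\mathsf{Mot}_1^{\operatorname{iso}}$ yields a pure torus, which $\Psi$ kills by taking Albanese.) The analog of the geometric-series computation from Lemma \ref{lemma_factoriz} is therefore especially short,
\[
(1 - [A])\,(1 + [A]) \;=\; 1 - [A]^2 \;=\; 1,
\]
so $[R(A)]$ is already a unit in $K_0(\widetilde{\mathsf{T}})$; no passage to a nontrivial Serre quotient is needed (the analogous subcategory of objects of weight $< -1$ is simply zero in $\widetilde{\mathsf{T}}$).

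To finish, I would use that $K(\widetilde{\mathsf{T}})$ is a ring spectrum inheriting its structure from the symmetric monoidal product on $\widetilde{\mathsf{T}}$, so multiplication by a $\pi_0$-unit is automatically an equivalence of spectra. Hence $R(A) \otimes -$ is a self-equivalence of $K(\widetilde{\mathsf{T}})$, the canonical map $K(\widetilde{\mathsf{T}}) \to K(\widetilde{\mathsf{T}})_{\loc}$ is itself an equivalence, and the desired dashed arrow then arises as the composite of $w_{-1}$ with a homotopy inverse. The main point I expect to be delicate is pinning down the exact form of $R(A)$, in particular verifying that the lattice component $\mathbb{Z}$ really persists in $R_1(A)$ (rather than being collapsed by the motivic Albanese construction); any residual ambiguity in signs, giving $\pm 1 \pm [A]$, is harmless since nilpotence of $[A]$ renders every such expression a unit and the rest of the argument is insensitive to the choice.
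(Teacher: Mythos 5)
Your proposal matches the paper's proof step for step: identify $R(A)=[\mathbb{Z}\xrightarrow{0}A]$, observe from the truncation formula \eqref{lwax3} that $[0\to A_1]\otimes[0\to A_2]=0$ in $\widetilde{\mathsf{T}}$ so the class $[A]$ is square-zero in $K_0(\widetilde{\mathsf{T}})$, conclude that $[R(A)]$ is already a unit, and hence that the localisation map is an equivalence so $w_{-1}$ factors trivially. The one small slip is the sign: since $[\mathbb{Z}\to A]$ sits in a short exact sequence $0\to[0\to A]\to[\mathbb{Z}\to A]\to[\mathbb{Z}\to 0]\to 0$ of objects with zero differential, the $K_0$-class is $1+[A]$ (as in the paper's $(1+[A])(1-[A])=1$), not $1-[A]$; but as you correctly note, nilpotence of $[A]$ makes either $\pm 1\pm[A]$ invertible, so this does not affect the argument.
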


\begin{proof}
The computation of the BM homology of $\Gb_m$ gets replaced by the computation of the realisation of $A$ in ${\mathsf{\tilde{T}}}$: $R(A)=[\mathbb{Z}\overset{0}{\longrightarrow}A]$. If $[A]\colon K(\mathsf{\tilde{T}})\rightarrow K(\mathsf{\tilde{T}})$ denotes
the multiplication with $A$ (this is exact), we get a well-defined homotopy
class of a map $-[A]$ since $K(\mathsf{\tilde{T}})$ is a spectrum. In $K_0$ we compute%
\[
(1+[A])(1-[A])=1-[A\otimes A]=1
\]
by \eqref{lwax3}.
\end{proof}

\subsection{Construction of the outgoing map}

Similar to Definition \ref{def_outgoing}, we get an outgoing map. For the sake of simplicity (in the literal sense) we restrict our attention to $A$ being a simple abelian variety.

\begin{definition}
Suppose $A$ is a simple abelian variety. We define $r\colon K(\mathsf{Var})_{\mathsf{loc}}\rightarrow K(\mathsf{AV}%
^{\operatorname*{iso}}) \rightarrow K(F)$ to be the composition $p\circ w_{-1}\circ\tilde{R}$, where $p$ is the projector on the summand of $K$-theory corresponding to the isogeny class of $A$ provided by Lemma \ref{lemma_KThyOfAVIsogenyCategory}.
\end{definition}

Finally, we discuss the analogue of Proposition \ref{prop_retract}.

\begin{proposition}\label{prop_retract_av}
The map
\[
r \circ i|_{K(\Oo)^{\circ}_0} \colon K(\Oo)^{\circ}_0 \longrightarrow K(F)^{\circ}_0
\]
induces the same maps on all homotopy groups $\pi_{\ast }$ as
\[
\varphi \colon K(\Oo)\longrightarrow K(F)\qquad M\longmapsto M\otimes_{\Oo}
F,
\]
restricted to the connected component of $0$.
\end{proposition}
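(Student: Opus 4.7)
The plan is to follow verbatim the strategy of Proposition \ref{prop_retract}, substituting: the mixed Hodge realisation by $R = \Psi \circ R_1$ of the preceding subsection; the weight $-2$ projector $w_{-2}$ by $w_{-1}$; and the final landing target $K(\Qb)$ by the composition of $w_{-1}$ with the projector $p\colon K(\mathsf{AV}^{\operatorname{iso}}) \twoheadrightarrow K(F)$ onto the isogeny-class summand of $A$ supplied by Lemma \ref{lemma_KThyOfAVIsogenyCategory}. The endpoint will be a homotopy-commutative square
\[
\xymatrix{
\BGL_n(\Oo) \ar[r]^{\Oo \hookrightarrow F} \ar[d]_c & \BGL_n(F) \ar[d]^c \\
K(\Oo)^{\circ}_0 \ar[r]^{g} & K(F)^{\circ}_0
}
\]
whose bottom row is $r \circ i$ restricted to the zero component; the conclusion will then follow from the almost-universal property of the $+$-construction (Theorem \ref{thm:magic}).

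First I would assemble the diagram analogous to \eqref{eqn}:
\[
\xymatrix@C=0.8em{
 & \BGL_n(\Oo) \ar[ld] \ar[d] \ar[r]^-{R} & {\mathsf{\tilde{T}}}^{\times} \ar[r]^-{w_{-1}} \ar[d] & (\mathsf{AV}^{\operatorname{iso}})^{\times} \ar[r]^-{p} \ar[d] & F_f(F)^{\times} \ar[d] \\
K(\Oo)^{\circ}_0 \ar[r]^-i & K(\mathsf{Var})_{\loc, \mathbf{1}} \ar[r]^-{\tilde{R}} & K({\mathsf{\tilde{T}}})_{\loc, \mathbf{1}} \ar[r]^-{w_{-1}} & K(\mathsf{AV}^{\operatorname{iso}}) \ar[r]^-p & K(F)^{\circ}_0
}
\]
using Lemma \ref{lemma:comm3_av} for the triangle on the left, Lemma \ref{lemma:comm1} for the two functoriality squares, and the basepoint shifts from \eqref{lcc1_av_version}. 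Then I would compute the top row: by construction the realisation sends $A^n$ to $[\Zb^n \overset{0}{\longrightarrow} A^n]$, the weight $-1$ piece of which is $A^n$ itself, and Poincaré reducibility identifies this with $F^n \in P_f(F)$; an automorphism arising from $g \in \GL_n(\Oo)$ maps to the same matrix regarded as an element of $\GL_n(F) = \GL_n(\Oo \otimes_\Zb \Qb)$. This identifies the top composite with the base change $\Oo \hookrightarrow F$ on $\BGL_n$.

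The decisive step is the analogue of Claim \ref{claim}: that the bottom row is a map of $H$-spaces with respect to $\oplus$. I would imitate the Claim \ref{claim} argument, introducing the split subtractive category $\bip_{\Var}(A)$ of Definition \ref{def:bip-sw} and an auxiliary rig category $\widetilde{\Tf}$, defined as the smallest symmetric bi-monoidal full subcategory of $\mathsf{\tilde{T}}$ containing $0$, $\mathbf{1}$, and $R(A) = [\Zb \overset{0}{\longrightarrow} A]$. The algebraic identity replacing the geometric series of Lemma \ref{lemma_factoriz} is
\[
(1 + R(A))(1 - R(A)) = 1 - R(A) \otimes R(A) = 1
\]
in $K_0(\widetilde{\Tf})_{\loc}$, which is immediate from the truncated tensor product formula \eqref{lwax3}: the entry $A \otimes A$ of the double complex sits in weight $-2$ and is discarded upon truncation. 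This yields an exponential equivalence of $H$-spaces $\exp\colon K(\mathsf{AV}^{\operatorname{iso}}) \simeq \GL_1(K(\widetilde{\Tf}))_{+1}$ given by $x \mapsto \mathbf{1} \oplus R(A) \otimes x$, mimicking Lemma \ref{lemma:w}; composing its inverse with $p$ (which is induced by an exact functor and hence $H$-additive) produces the required $H$-space map. The other ingredients (the ring-spectrum map $R'\colon K(\bip_{\Var}(A)) \to K(\widetilde{\Tf})$ and the $H$-space map $i'$ intertwining $\oplus$ with $\otimes$) transplant directly, with $A$ in place of $\G_m$.

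Having the bottom row as an $H$-space map and the top composite identified with the base change $\Oo \hookrightarrow F$, Theorem \ref{thm:magic} applied to $\BGL(\Oo) \to K(F)^{\circ}_0$ forces $\pi_*(r \circ i) = \pi_*(\varphi)$ on $K(\Oo)^{\circ}_0$, completing the argument. The main obstacle I expect is the exponential equivalence: unlike the Tate motive $\Lb$ which is pure of weight $-2$, the object $R(A)$ is mixed of weights $0$ and $-1$, and one must verify carefully that the truncation built into $\mathsf{\tilde{T}}$ does the expected work of suppressing the weight $-2$ contribution of $R(A) \otimes R(A)$. Once this is checked, the rest of the argument is a faithful translation of the torus case.
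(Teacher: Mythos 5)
Your overall strategy correctly mirrors the torus case, but the explicit algebraic formulas you wrote are wrong at precisely the point you flagged as a potential obstacle, and the paper's construction differs from yours exactly there.

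You write $(1 + R(A))(1 - R(A)) = 1 - R(A)^{\otimes 2} = 1$, which requires $R(A)^{\otimes 2} = 0$. But $R(A) = [\Zb \xrightarrow{0} A]$, and \eqref{lwax3} gives $R(A)^{\otimes 2} = [\Zb \xrightarrow{0} A \oplus A] \neq 0$: the truncation discards $A\otimes A$ (weight $-2$), but the mixed terms $\Zb \otimes A$ and $A \otimes \Zb$ remain in weight $-1$. In $K_0$ the class of $R(A)^{\otimes 2}$ is $1 + 2[0\to A]$, not zero. For the same reason your proposed exponential $x \mapsto \mathbf{1} \oplus R(A)\otimes x$ is \emph{not} an $H$-space map: the cross term $R(A)^{\otimes 2}\otimes x\otimes y$ in $\exp(x)\otimes\exp(y)$ does not vanish.

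The correct replacement for the Tate object $\Lb$ is not $R(A)$ but its pure weight $-1$ constituent $a := [0 \xrightarrow{0} A]$; one has $R(A) = \mathbf{1} \oplus a$, and \eqref{lwax3} gives $a \otimes a = 0$ \emph{exactly} (all four corners of the double complex either vanish or are truncated away). Then $R(A) = 1 + a$ has inverse $1 - a$ on $K_0$, and the exponential $x \mapsto \mathbf{1}\oplus a\otimes x$ does intertwine $\oplus$ with $\otimes$ with no leftover cross term, so — unlike the torus case — no quotient analogous to $\Tf'$ is needed. The auxiliary rig category should accordingly be the smallest bi-monoidal full subcategory of $\widetilde{\mathsf{T}}$ generated by $0$, $\mathbf{1}$, and $a$ (not $R(A)$): a full bi-monoidal subcategory is not in general closed under direct summands, so the one generated by $R(A)$ may not contain $a$ at all, and $a$ is the object one actually needs to tensor with. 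You diagnosed the right danger (that $R(A)$ is mixed of weights $\{0,-1\}$ rather than pure), but the resolution is not to hope the truncation in $\widetilde{\mathsf{T}}$ suppresses a weight $-2$ contribution of $R(A)^{\otimes 2}$ — it already does, and the problem is the surviving weight $-1$ part — but to replace $R(A)$ by $a$ throughout the Claim-type argument. With that substitution the rest of your proposal agrees with the paper's proof.
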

\begin{proof}
The tensoring functor is exact and induces on $K$-theory the same map as interpreting a matrix with entries in $\Oo$ as one with entries in $F$.
\[
\xymatrix{
\BGL(\Oo) \ar[r]^{\Oo \hookrightarrow F } \ar[d]_k & \BGL(F) \ar[d]^k \\
K(\Oo)^{\circ}_0 \ar[r]_{\varphi } & K(F)^{\circ}_0 \text{.}
}
\]
Consider
\begin{equation}\label{eqn2}
\xymatrix{
 & \BGL_n(\Oo) \ar[ld] \ar[d]^{\tilde{i}} \ar[r]^-{R} & {\mathsf{\tilde{T}}}^{\times} \ar[r]^-{w_{-1}} \ar[d] & F_f(F)^{\times} \ar[d] \\
K(\Oo)_0^{\circ} \ar[r]^-i & K(\Var)_{\loc, \mathbf{1}} \ar[r]^-{\tilde{R}} & K( {\mathsf{\tilde{T}}} )_{\loc, \mathbf{1}} \ar[r]^-{w_{-1}} & K(F)^{\circ}_0
}
\end{equation}
in $\Ho(\Spaces)$.
The triangle on the left stems from Lemma \ref{lemma:comm3_av}, restricted to a specific $\BGL_n$.
The middle and right square come from Lemma \ref{lemma:comm1}. All downward arrows shift to the connected components of the direct sum neutral element $0$ (resp. the tensor units $\mathbf{1}$).

 The proof of the following claim is shown analogously to Claim \ref{claim} using the subtractive category $\bip_{\Var}(A)$ instead of $\bip_{\Var}(\Gb_m)$.

\begin{claim}\label{claim_av}
The composition of the bottom row is a morphism of $H$-spaces with respect to direct sums.
\end{claim}

 When we handled the $\Gb_m$-case, we used the auxiliary rig category $\Tf$, which was set up as the smallest symmetric bi-monoidal full subcategory of $\MHS_{\Zb}^{\eff}$, which contains $0, \mathbf{1}, \Lb$. Essentially, we cut out precisely the weight $0$ and $-2$ parts relevant for our computation. Then we defined $\Tf'$ to be
$\Tf / (\Tf\cap \mathbf{S}_{-3})$ in order to get rid of higher powers of $\Lb$ of weight $-4$ and lower. As the counterpart for this category we now take the smallest symmetric bi-monoidal full subcategory of ${\mathsf{\tilde{T}}}$ which contains
\[
0\text{,}\qquad\mathbf{1}_{\mathsf{\tilde{T}}}=[\mathbb{Z}\overset
{0}{\longrightarrow}0]\text{,}\qquad\lbrack0\overset{0}{\longrightarrow}A]
\]
so that $R(A)=\mathbf{1}_{\mathsf{\tilde{T}}}+[0\overset{0}{\longrightarrow
}A]$.
There is no need for an analogue quotienting out something like ${\Tf\cap \mathbf{S}_{-3}}$ because the higher powers of the weight $-1$ object $\lbrack0\overset{0}{\longrightarrow}A]$ are already zero by \eqref{lwax3}.

We have $R(A)=[\mathbb{Z}\overset{0}{\longrightarrow}A]$, therefore%
\begin{align*}
pw_{-1}\left(  R(A^{n})\right)    & =pw_{-1}\left(  [\mathbb{Z}\overset
{0}{\longrightarrow}A]^{\otimes n}\right)  \underset{\text{Eq. \ref{lwax3}}%
}{=}pw_{-1}\left(  [\mathbb{Z}\overset{0}{\longrightarrow}A\oplus\cdots\oplus
A]\right)  \\
& =p(A\oplus\cdots\oplus A)=F^{n}\text{.}%
\end{align*}
An automorphism of $A^{n}$, given by an element of $\operatorname*{GL}%
_{n}(\mathcal{O})$, is mapped to the same matrix in $\operatorname*{GL}%
_{n}(F)$. The rest of the argument follows the proof of Proposition \ref{prop_retract}.
\end{proof}

\section{Proof of the second theorem}

\begin{theorem}
\label{thm2}Let $F$ be a CM field with ring of integers $\mathcal{O}$. Let $A$ be a simple
abelian variety over a perfect field $k$ (possibly of positive characteristic) with
complex multiplication by $\mathcal{O}$. Let $\mathsf{Var}$ denote varieties
over $k$.

\begin{enumerate}
\item For every $n\geq1$ the group $\pi_{n}\left(  K(\mathsf{Var})
[A^{-1}]\right)$ has the Quillen $K$-group $K_{n}(\mathcal{O})$ as a direct summand.

\item The rational homotopy type $L_{\mathbb{Q}}K(\mathcal{O})$ is a retract
of $L_{\mathbb{Q}}\left(  K(\mathsf{Var})[A^{-1}]\right)  $ after going to the
$1$-connective cover.
\end{enumerate}
\end{theorem}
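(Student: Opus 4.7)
The plan is to mimic the proof of Theorem \ref{thm1} \emph{mutatis mutandis}, with Proposition \ref{prop_retract_av} playing the role of Proposition \ref{prop_retract} and with the extension-of-scalars map $\varphi\colon K(\mathcal{O}) \to K(F)$, $M \mapsto M \otimes_{\mathcal{O}} F$, taking the place of $M \mapsto M \otimes \mathbb{Q}$ used in Theorem \ref{thm1}. Two auxiliary number-theoretic inputs are required: first, injectivity of $\varphi$ on $\pi_n$ for every $n \geq 1$, and second, that $\varphi$ becomes a rational isomorphism on $\pi_n$ for $n \geq 2$. Both will be deduced from Quillen's localisation sequence for Dedekind domains together with the computation of the $K$-theory of finite fields.

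For part (1), I would first verify injectivity of $\varphi\colon K_n(\mathcal{O}) \to K_n(F)$ by invoking the long exact localisation sequence
\[
\cdots \to \bigoplus_{\mathfrak{p}} K_n(\kappa(\mathfrak{p})) \to K_n(\mathcal{O}) \xrightarrow{\varphi} K_n(F) \to \bigoplus_{\mathfrak{p}} K_{n-1}(\kappa(\mathfrak{p})) \to \cdots,
\]
where the $\kappa(\mathfrak{p})$ are finite residue fields. For $n=1$ this reduces to $SK_1(\mathcal{O})=0$ (Bass--Milnor--Serre) plus the elementary inclusion $\mathcal{O}^{\times} \hookrightarrow F^{\times}$; for even $n \geq 2$, Quillen's theorem $K_{2m}(\mathbb{F}_q)=0$ for $m \geq 1$ forces the leftmost map to vanish; and for odd $n \geq 3$ the triviality of the leftmost map is a classical $K$-theoretic fact for rings of integers in number fields (see, e.g., Weibel's \emph{K-book}, Chapter V). Granted this injectivity, Proposition \ref{prop_retract_av} yields $r \circ i = \mathrm{id}$ on $\pi_n K(\mathcal{O})^{\circ}_0$, so $K_n(\mathcal{O}) \subseteq \pi_n(K(\mathsf{Var})[A^{-1}])$ appears as a direct summand.

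For part (2), the residue-field terms $K_j(\kappa(\mathfrak{p}))$ with $j \geq 1$ are finite by Quillen, hence torsion, and so vanish after rationalisation. Consequently $\varphi \otimes \mathbb{Q}\colon K_n(\mathcal{O}) \otimes \mathbb{Q} \to K_n(F) \otimes \mathbb{Q}$ is an isomorphism for every $n \geq 2$, in agreement with Borel's explicit rank computations. Passing to the $1$-connective covers and applying Whitehead's theorem, the composite $r \circ i$ of Proposition \ref{prop_retract_av} then becomes a rational equivalence $L_{\mathbb{Q}}K(\mathcal{O}) \to L_{\mathbb{Q}}K(F)$, which exhibits $L_{\mathbb{Q}} K(\mathcal{O})$ as the claimed retract of $L_{\mathbb{Q}}(K(\mathsf{Var})[A^{-1}])$ in the rational homotopy category.

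The genuine difficulty of the theorem --- constructing a product-compatible realisation valid in positive characteristic, with Deligne $1$-motives in place of mixed Hodge structures --- is entirely absorbed into Proposition \ref{prop_retract_av}. What remains in this section is therefore essentially bookkeeping plus the two number-theoretic inputs above; the only mild subtlety is the odd-degree injectivity of $\varphi$, which we handle by appeal to the standard structural behaviour of the localisation sequence for rings of integers in number fields.
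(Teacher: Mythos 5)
Your proposal is correct and follows the same route as the paper: both reduce everything to Proposition \ref{prop_retract_av}, then invoke the Dedekind localisation sequence plus Quillen's computation of $K_*(\mathbb{F}_q)$ to establish injectivity of $K_n(\mathcal{O}) \to K_n(F)$ for all $n \geq 1$ and its rational bijectivity for $n \geq 2$, from which the retraction and the rational equivalence follow via Whitehead. The only superficial difference is organisational: you split the injectivity argument by parity (vanishing of $K_{2m}(\mathbb{F}_q)$ in even degree, vanishing of the transfer in odd degree), whereas the paper cites Soul\'e's theorem once for the short exact sequences in both parities.
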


\begin{example}\label{avwithcm}
Given an arbitrary CM field $F$ of degree $2m:=[F:\mathbb{Q}]$ (and therefore
$m$ complex places), the Minkowski embedding%
\[
F\hookrightarrow\mathbb{C}^{m}%
\]
gives rise to the quotient $\mathbb{C}^{m}/\mathcal{O}$. This is an abelian
variety\footnote{At first just a complex torus, but one can show the existence of an ample line bundle.} over $\mathbb{C}$ with complex multiplication by $\mathcal{O}$, so the
existence of such $A$ over the complex numbers is immediate (see \cite[Ch. II,
\S 5.2, Theorem 1 and \S 6.1]{MR1492449} and the discussion before the theorem). After this use of the complex numbers, one can show that any such example actually already works over a sufficiently large finite extension of the rationals (\cite[Ch. III, \S 12.4, Proposition 26]{MR1492449}.
\end{example}

\begin{proof}
Below, we write $(\ast)$ for the arrow which is induced from the exact
functor $M\mapsto M\otimes_{\mathcal{O}}F$ on $K$-groups. By a theorem of
Soul\'{e} \cite[Chapter 6, Theorem 6.8]{K-book} the localisation sequence for
$K$-theory gives rise to exact sequences%
\begin{align*}
0  & \longrightarrow K_{n}(\mathcal{O})\overset{(\ast)}{\longrightarrow}%
K_{n}(F)\longrightarrow\bigoplus_{\mathfrak{p}}K_{n-1}(\kappa(\mathfrak{p}%
))\longrightarrow0\qquad\text{(for }n\geq2\text{ even)}\\
0  & \longrightarrow K_{n}(\mathcal{O})\underset{\sim}{\overset{(\ast
)}{\longrightarrow}}K_{n}(F)\longrightarrow0\qquad\text{(for }n\geq3\text{
odd)}%
\end{align*}
where $\mathfrak{p}$ runs through the non-zero prime ideals of $\mathcal{O}$
and $\kappa(\mathfrak{p}):=\mathcal{O}/\mathfrak{p}$ is the respective residue
field. In particular, $(\ast)$ is injective for all $n\geq1$ (the case $n=1$
is easy). By Proposition \ref{prop_retract_av} the map $r\circ i\mid_{K(\mathcal{O}%
)_{0}^{\circ}}$ induces for $n\geq1$%
\[
K_{n}(\mathcal{O})\overset{i}{\longrightarrow}{\pi }_{n}K(\mathsf{Var}%
)[A^{-1}]\overset{r}{\longrightarrow}K_{n}(F)
\]
the same map as $(\ast)$. Thus, its image is again $K_{n}(\mathcal{O})$ and we
have obtained a retraction. This proves (1). Next, note that $\kappa
(\mathfrak{p})$ is always a finite field. Quillen's computation of the
$K$-theory of finite fields shows that $K_{n}(\mathbb{F}_{q})\otimes
\mathbb{Q}=0$ for all $n\geq1$. Thus, we have%
\[
K_{n}(\mathcal{O})\otimes\mathbb{Q}\underset{\sim}{\overset{(\ast
)}{\longrightarrow}}K_{n}(F)\otimes\mathbb{Q}%
\]
for all $n\geq2$. Hence, for the $1$-connective cover we obtain by Whitehead's
lemma that the arrow $(\ast)$ induces an equivalence $L_{\mathbb{Q}%
}K(\mathcal{O})\rightarrow L_{\mathbb{Q}}K(F)$ of homotopy types. Thus, the
map $r\circ i\mid_{K(\mathcal{O})_{0}^{\circ}}$ of Proposition \ref{prop_retract_av}
exhibits the rationalisation as a genuine retract of $L_{\mathbb{Q}}\left(  K(\mathsf{Var}%
)[A^{-1}]\right)  $  in the rational homotopy category.
\end{proof}

\begin{example}\label{example_bigcmfields}
The cyclotomic fields $\Qb(\zeta_n)$ are CM fields with maximal totally real subfield ${\Qb(\zeta_n+\zeta_n^{-1})}$.
\end{example}

\begin{corollary}
Suppose $k=\mathbb{C}$. We have $\dim\left(  K_{n}(\mathsf{Var})\otimes
\mathbb{Q}\right)  =\infty$ for all odd $n\geq3$.
\end{corollary}

\begin{proof}
Pick any $N\geq1$. Let $F$ be a CM\ field of degree $2N$ (such always exist; e.g. an arbitrarily large supply stems from Example \ref{example_bigcmfields})
and let $A$ be a simple abelian variety with complex multiplication by the
ring of integers $\mathcal{O}$ of $F$ (which exists, see Example \ref{avwithcm}). We note that $F$ has no real places and
$N$ complex places, so $r_{2}=N$. By Borel's rank computations we have
$\dim\left(  K_{n}(\mathcal{O})\otimes\mathbb{Q}\right)  =N$ for all
$n\equiv1,3\operatorname{mod}4$ and $n\geq2$ (\cite[Chapter IV, Theorem 1.18
for $A=F$]{K-book}). Thus, by Theorem \ref{thm2} we obtain a copy of
$\mathbb{Q}^{N}$, say with basis $b_{1},\ldots,b_{N}$, as a direct summand of
$\pi_{n}\left(  K(\mathsf{Var})[A^{-1}]\right)\otimes\mathbb{Q}$. Thus, for
$M$ big enough, $A^{\otimes M}\cdot\left\langle b_{1},\ldots,b_{N}%
\right\rangle $ lie in $\pi_{n}\left(  K(\mathsf{Var})\right)  \otimes
\mathbb{Q}$. Now let $N$ go to infinity.
\end{proof}

Many variations are possible and we will not try to be exhaustive.

\begin{theorem}\label{thm3}
Let $A$ be a simple
abelian variety over a perfect field $k$ with complex multiplication by some order $\mathcal{O} \subseteq F$, where $F$ is a division algebra (e.g. an order in a quaternion algebra). Then the rational homotopy type $L_{\mathbb{Q}}K(\mathcal{O})$ is a retract
of $L_{\mathbb{Q}}\left(  K(\mathsf{Var})[A^{-1}]\right)  $ after going to the
$1$-connective cover.
\end{theorem}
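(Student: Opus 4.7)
The plan is to run the proof of Theorem~\ref{thm2} \emph{mutatis mutandis}, replacing the role of the Dedekind-domain localisation sequence by Quillen's localisation theorem for $\Zb$-orders in a semisimple $\Qb$-algebra. Two ingredients need to be checked.

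First, I would verify that Proposition~\ref{prop_retract_av} still applies. Nothing in the constructions of Section~5 --- the map $i$, the $1$-motive realisation $\tilde R$, the truncation $w_{-1}$, the K\"unneth identification, or the decomposition of $K(\mathsf{AV}^{\operatorname{iso}})$ in Lemma~\ref{lemma_KThyOfAVIsogenyCategory} --- uses commutativity of $F$. The projector $p$ is set up exactly as in the CM case once one notes that, by Lemma~\ref{lemma_KThyOfAVIsogenyCategory}, the summand of $K(\mathsf{AV}^{\operatorname{iso}})$ corresponding to the isogeny class of $A$ is $K(\operatorname{End}_k^{0} A) = K(F)$. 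A quick rerun of the computation at the end of the proof of Proposition~\ref{prop_retract_av} then shows $\pi_*\bigl(r \circ i|_{K(\mathcal{O})_0^\circ}\bigr) = \pi_*(\varphi)$, where $\varphi(M) = M \otimes_{\mathcal{O}} F$.

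Second, I need $K_n(\mathcal{O}) \otimes \Qb \overset{\sim}{\to} K_n(F) \otimes \Qb$ for $n \geq 2$. I would obtain this from the long exact sequence
\[
\cdots \to K_n(\mathcal{O}) \to K_n(F) \to \bigoplus_P K_{n-1}(\mathcal{O}/P) \to K_{n-1}(\mathcal{O}) \to \cdots
\]
gotten by applying Quillen's localisation theorem to the Serre pair of torsion $\mathcal{O}$-modules inside finitely generated $\mathcal{O}$-modules, and identifying the torsion term via d\'evissage, with $P$ ranging over the maximal two-sided ideals of $\mathcal{O}$. Each residue ring $\mathcal{O}/P$ is a finite simple artinian ring, hence by Wedderburn--Artin and Morita invariance its $K$-theory is, up to finite products, the $K$-theory of a finite field; by Quillen's computation this is torsion in positive degrees. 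Rationalising the sequence therefore gives the desired isomorphism for $n \geq 2$.

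With these two ingredients in place, Whitehead's lemma upgrades $r \circ i|_{K(\mathcal{O})_0^\circ}$ to an equivalence in the rational homotopy category of $1$-connective spaces, exhibiting $L_{\Qb}K(\mathcal{O})$ as a retract of $L_{\Qb}\bigl(K(\mathsf{Var})[A^{-1}]\bigr)$, exactly as in the proof of Theorem~\ref{thm2}. The main (mild) obstacle is the non-commutative localisation sequence: it does not appear explicitly in the standard references \cite{K-book, borel}, which address the commutative number-ring case, and so needs to be cited via a careful application of Quillen's devissage/localisation machinery to $\Zb$-orders. Once this is done the rest of the argument is formal.
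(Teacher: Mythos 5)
Your proposal follows the paper's route: rerun the proof of Theorem~\ref{thm2} \emph{mutatis mutandis}, observing that the constructions of Section~5 never use commutativity of $F$, and replace the Soul\'e localisation sequence for rings of integers by its analogue for $\Zb$-orders in semisimple $\Qb$-algebras. That is exactly the paper's strategy. The paper, however, simply cites \cite[Chapter~IV, Theorems~1.17 and~1.18]{K-book} for the fact that $K_n(\mathcal{O}) \otimes \Qb \cong K_n(F) \otimes \Qb$ for $n \geq 2$ (and that these in turn agree rationally with the $K$-theory of the centre of $F$), whereas you attempt to re-derive this from scratch.

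Your re-derivation has a genuine gap. Quillen's localisation/d\'evissage machinery applied to the Serre subcategory of torsion modules inside all finitely generated $\mathcal{O}$-modules produces a long exact sequence in $G$-theory, i.e.\ involving $G_n(\mathcal{O})$, not $K_n(\mathcal{O})$. For a regular ring these agree, and maximal orders in semisimple $\Qb$-algebras are hereditary (global dimension one), hence regular --- but the theorem allows an arbitrary order $\mathcal{O} \subseteq F$, and a non-maximal order need not be regular (already $\Zb[\sqrt{-3}] \subset \Qb(\sqrt{-3})$ is not). So your sequence computes $G_*(\mathcal{O})$, and an extra argument is needed to pass to $K_*(\mathcal{O})$. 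The standard fix --- and essentially the content of the cited Weibel theorem --- is to compare $\mathcal{O}$ rationally with a maximal order $\Gamma$ containing it (the inclusion induces rational isomorphisms on $K_n$ for $n \geq 1$, essentially because $\Gamma/\mathcal{O}$ is finite), then run the $G$-theory localisation sequence for the hereditary ring $\Gamma$. A related mild issue is that your d\'evissage step for non-commutative $\mathcal{O}$ should be phrased in terms of simple torsion modules rather than maximal two-sided ideals, though finite length of the torsion modules does make the semisimplification argument go through. You also remark that the relevant non-commutative localisation sequence ``does not appear explicitly'' in \cite{K-book}; in fact \cite[Chapter~IV, Theorem~1.17]{K-book} treats precisely $\Zb$-orders in semisimple $\Qb$-algebras, so the cleanest repair of your argument is to cite it, as the paper does.
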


\begin{proof}
The proof is as for Theorem \ref{thm2}. Our constructions were sufficiently general to handle non-commutative orders as well. Then use \cite[Chapter IV, Theorem 1.17 and 1.18]{K-book} to see that the $K$-theory of the order rationally and in degrees $\geq 2$ agrees with the $K$-theory of the division algebra $F$, and further with the $K$-theory of the centre of the division algebra (which is a number field). Hence, the same proof as for Theorem \ref{thm2} works.
\end{proof}

For constructing abelian varieties with sufficiently many complex
multiplications of prescribed type for base fields $k$ in positive characteristic, consult
\cite{MR949273}.


\newif\ifabfull\abfulltrue\def\cprime{$'$}
\providecommand{\bysame}{\leavevmode\hbox to3em{\hrulefill}\thinspace}
\providecommand{\MR}{\relax\ifhmode\unskip\space\fi MR }
\providecommand{\MRhref}[2]{%
  \href{http://www.ams.org/mathscinet-getitem?mr=#1}{#2}
}
\providecommand{\href}[2]{#2}


\end{document}